\documentclass[12pt]{article}
\usepackage{tikz}
\usepackage{pgfplots}
\usepackage{times}
\usepackage{amsmath,amsfonts, amstext,amssymb,amsbsy,amsopn,amsthm}
\usepackage{dsfont}
\usepackage{esint}
\usepackage{graphicx}   
\usepackage[all]{xy}
\usepackage{color}
\usepackage{tikz}
\usepackage{lineno}
\usetikzlibrary{intersections}
\usepackage{hyperref}
\hypersetup{colorlinks=true,linkcolor=black,anchorcolor=black,citecolor=black}
\usepackage{titlesec}
\usepackage{caption}

\newtheorem{mthm}{Theorem}
\newtheorem{mpro}{Proposition}
\newtheorem{theorem}{Theorem}[section]
\newtheorem{mrem}{Remark}
\newtheorem{definition}{Definition}[section]

\newtheorem{mlem}{Lemma}
\newtheorem{lemma}[definition]{Lemma}

\newtheorem{mcor}{Corollary}

\theoremstyle{remark}
\newtheorem{remark}[definition]{Remark}
\numberwithin{equation}{section}

\newcommand{\R}{\mathbb{R}}

\newcommand{\fr}{\displaystyle\frac}
\newcommand{\jf}{\displaystyle\int}

\newcommand{\be}{\begin{equation}}
	\newcommand{\ee}{\end{equation}}
\newcommand{\bee}{\begin{equation*}}
	\newcommand{\eee}{\end{equation*}}
\newcommand{\N}{\mathbb{N}}
\newcommand{\Z}{\mathbb{Z}}
\DeclareMathOperator{\esssup}{esssup}

\title{Pointwise regularity of solutions for fully fractional parabolic equations}

\usepackage[T1]{fontenc}
\author{Yahong Guo, Qizhen Shen and Jiongduo Xie }
\date{ }
\usepackage{tikz}
\usetikzlibrary{positioning}
\usetikzlibrary{patterns}
\usepackage{pgfplots}
\pgfplotsset{compat=1.7}
\begin{document}
	\newcommand\tbbint{{-\mkern -16mu\int}}
	\newcommand\tbint{{\mathchar '26\mkern -14mu\int}}
	\newcommand\dbbint{{-\mkern -19mu\int}}
	\newcommand\dbint{{\mathchar '26\mkern -18mu\int}}
	\newcommand\bint{
		{\mathchoice{\dbint}{\tbint}{\tbint}{\tbint}}
	}
	\newcommand\bbint{
		{\mathchoice{\dbbint}{\tbbint}{\tbbint}{\tbbint}}
	}
    \maketitle
	
	\begin{abstract}
    This paper investigates  the higher pointwise regularity of nonnegative classical solutions for fully fractional parabolic equations $(\partial_t -\Delta)^{s} u =  f,$ where $s\in(0,1)$. We establish $C^{k+\alpha+2s}$ or $C^{k+\alpha+2s,\ln} (k\geq 0,\alpha\in[0,1))$ pointwise regularity according to $\alpha+2s\notin \Z$ or $\alpha+2s\in \Z$, which imply the classical local regularity directly. 
    We provide a simplified and unified proof by introducing novel equivalent definitions for pointwise function spaces. 
    Moreover, the equivalent integral representation and directional average for fractional heat kernel play an important role in our discussion.
    \end{abstract}
    \bigskip
    
    \textbf{Mathematics subject classification (2020):}35R11, 35B65, 35K99.
    \bigskip
    
    \textbf{Keywords:} nonlocal parabolic equations, pointwise regularity, Schauder estimates.
    \section{Introduction}
    
    In this paper, we establish new pointwise regularity estimates for the solutions to the following master equations   
\begin{equation}\label{1.0}
    	(\partial_t -\Delta)^{s} u(x,t) =  f(x,t)\ \ \mbox{in}\ \ \R^n\times\R.
    \end{equation}
Here, the fully fractional heat operator $(\partial_t-\Delta)^s$ was first introduced by M. Riesz in \cite{Riesz} and is defined by the following singular integral
\begin{equation}\label{nonlocaloper}
(\partial_t - \Delta)^s u(x,t)
:= C_{n,s} \int_{-\infty}^{t} \int_{\mathbb{R}^n}
\frac{u(x,t) - u(y,\tau)}{(t-\tau)^{\frac{n}{2} + 1 + s}} e^{-\frac{|x-y|^2}{4(t-\tau)}}  dy  d\tau,
\end{equation}
    where $0<s<1$, the integral with respect to $y$ is in the sense of Cauchy principal value, and
    the normalization constant is given by $$C_{n,s}=\frac{1}{(4\pi)^{\frac{n}{2}}|\Gamma(-s)|}$$
    with $\Gamma(\cdot)$ denoting the Gamma function. We say $u(x,t)$ is a classical solution of (\ref{1.0}) provided 
    $$u(x,t)\in C^{2s+\epsilon,s+\epsilon}_{x,\, t,\, {\rm loc}}(\mathbb{R}^n\times\mathbb{R}) \cap \mathcal{L}(\mathbb{R}^n\times\mathbb{R})$$
    for some $\varepsilon >0$, where the slowly increasing function space $\mathcal{L}(\R^n\times\mathbb{R})$ is defined by
    $$\mathcal{L}(\R^n\times\mathbb{R})=\left\{u(x,t)\in L_{\mathrm{loc}}^1(\mathbb{R}^n\times\mathbb{R})\mid\int_{-\infty}^t\int_{\mathbb{R}^n}\frac{|u(x,\tau)|e^{-\frac{|x|^2}{4(t-\tau)}}}{1+(t-\tau)^{\frac{n}{2}+1+s}}dxd\tau<\infty,\:\forall\:t\in\mathbb{R}\right\},$$
   and the local parabolic H\"{o}lder space $C^{2s+\epsilon,s+\epsilon}_{x,\, t,\, {\rm loc}}(\mathbb{R}^n\times\mathbb{R})$ is standard as given in \cite{WGC}.   
   
   The fractional operator $(\partial_t-\Delta)^s$ demonstrates essential non-local characteristics in both spatial and temporal dimensions, and converges to the classical heat operator $\partial_t-\Delta$ as $s\rightarrow 1$ (cf. \cite{FNW}). Furthermore, when applied to a function $u$ depending solely on the spatial variable $x$, the space-time nonlocal operator $(\partial_t-\Delta)^s$ reduces to the well-known fractional Laplacian
\begin{equation}\label{1.3-1}
    	(\partial_t-\Delta)^s u(x)=(-\Delta)^s u(x):=C_{n, s}P.V. \int_{\mathbb{R}^n}\frac{u(x)-u(y)}{|x-y|^{n+2s}}dy.
    \end{equation}
This operator has attracted significant attention, and we refer the reader to \cite{CS,CL, CLL1, CZhu, DQ, DDW, DFV, LXYZ, SV} and the references therein for a comprehensive overview of recent developments. On the other hand, when $(\partial_t-\Delta)^s$ is applied to a function $u$ that depends only on the time variable $t$, it simplifies to the Marchaud fractional derivative $\partial_t^s$:
    \begin{equation}\label{1.4-1}
    	(\partial_t-\Delta)^s u(t)=\partial_t^s u(t):=C_s \jf_{-\infty}^t\fr{u(t)-u(\tau)}{(t-\tau)^{1+s}}d\tau.
    \end{equation}
This differential operator emerges naturally in the mathematical modeling of diverse physical systems (see \cite{DCL1, DCL2, EE}).

Equation \eqref{1.0} plays a fundamental role in numerous applications in the physical and biological sciences.  These include anomalous diffusion \cite{KBS,MK}, chaotic dynamics \cite{Z} and biological invasions \cite{BRR}. In recent years, there has been extensive research on both the qualitative and quantitative aspects of \eqref{1.0} (see \cite{CS-2014,CG,CG2,CG1,WGC,ST}).  

Schauder theory plays a central role in the study of nonlocal elliptic and parabolic equations, providing essential regularity estimates that are critical for establishing well-posedness and analyzing large-time behavior.
The relevant work involving nonlocal space-time operators \eqref{1.3-1} and \eqref{1.4-1} has been widely explored, and we refer to 
\cite{A-2018,ACV,SHJ-2023,CCV,CS-2009,CLM-2020, DJ,DZ,FR-2024,JX-2015,S-2007,S-2024,R-2013}.

For equation (\ref{1.0}), classical Schauder estimates 
\begin{equation} \label{ST2}\|u\|_{C_{x,t}^{2s + \alpha, s + \alpha/2}(Q_1(0,0))} \leq C \left( \|f\|_{C_{x,t}^{\alpha, \alpha/2}(Q_2(0,0))} + \|u\|_{L^\infty (\bf{\R^n \times (-\infty, 4)})}\right) .\end{equation}
is established in pioneering work by Caffarelli and Silvestre \cite{CS-2014},
Stinga and Torrea \cite{ST}, 
and subsequent works \cite{B-2020,BS,KO}.
Recently,  for nonnegative solutions, Chen, Guo and Li \cite{WGC} established the refined Schauder estimates that depend solely on local data, namely, by replacing $\|u\|_{L^\infty (\bf{\R^n \times (-\infty, 4)})}$ with $\|u\|_{L^\infty (Q_{2}(0,0))}$ in \eqref{ST2}.
A powerful technique for deriving classical Schauder estimates is approximating by polynomials, which naturally leads to a general concept "pointwise Schauder estimate".
Recently, the pointwise Schauder estimate has attracted growing attention \cite{A-1997,C-1989,J,JM,JX-2016,LY-2024,LL-2020,O-2013} 
and found significant applications in other critical areas, including free boundary problems \cite{LK-2023,EM-2013,LM-2009,MZ}, nodal set \cite{H-2000,Lianyy} and transmision problems \cite{SS}.

The pointwise regularity of viscosity solutions to the Laplace equation was first established by Caffarelli \cite{C-1989}. This result has recently been improved by Lian and Zhang \cite{LK-2023}, who proved a $C^{k+l+\alpha}$ estimate for Dirichlet problems, where $k,l \in \mathbb{N}$ and $0<\alpha<1$.

For the classical case corresponding to $s=1$ in equation (\ref{1.0}), at a given point $(x_{0},t_{0})$, when $f(x,t)\in C^{k+\alpha}(x_{0},t_{0})$ with $0<\alpha<1$, Lian \cite{Lianyy} established  $C^{k+\alpha+2}(x_{0},t_{0})$ pointwise estimate for weak solutions
by  employing compactness and perturbation techniques. 


Furthermore, for the nonlocal stationary case \eqref{1.3-1}, Li and Wu \cite{LW1}, by utilizing the Poisson representation, proved $C^{k+\alpha+2s}(x_{0})$ pointwise regularity for weak solution $u$ to
\begin{equation}\label{Wu}
(-\Delta)^{s}u(x)=f(x)
\end{equation}
under assumption $f\in C^{k+\alpha}(x_{0})$
at a given point $x_{0}$. 

When handling the general equation (\ref{1.0}), we will face two significant difficulties:

\begin{itemize}
    \item [(i)] One of the key ingredients in \cite{Lianyy} is that when the local heat operator ${\partial_t}-\Delta$ applies to a quadratic polynomial, it yields a constant. However, this is no longer true for our nonlocal operator $(\partial_t-\Delta)^s$ here. \item[(ii)] In \cite{LW1}, an important tool is the Poisson representation of $s-$harmonic functions. While for our fully fractional parabolic operator $(\partial_t-\Delta)^s$, there is no corresponding Poisson kernel. 
\end{itemize}
 To overcome these difficulties, some new ideas and techniques are introduced. We employ the integral representation of the solution and decompose it into two parts: an internal part and an external part. For the external part, different from \cite{Lianyy}, we provide a new perturbation technique for the kernel function to control the value of the solution at one point  by its nearby points. For the internal part, we decompose the internal part into three components $S_{r},T_{r}$ and $u_{P}$ (See Definition \ref{DEF}). The argument for $T_{r}$
  is simplified and unified by introducing some novel equivalent definitions of pointwise function spaces.

\bigskip

Before stating our main results, we introduce some relevant pointwise function spaces.
\subsection{Notation and definitions }

 Let
    $$\sigma=(\sigma_{1},\sigma_{2},\cdots,\sigma_{n+1})$$ be a multi-index with $\sigma_{i}\in \N~(i=1,2,\cdots,n+1)$.
    For $(x,t)\in\R^{n}\times\R$ and $k\in \N$, we denote
    \begin{align*}
    	&|\sigma^{\prime}|=\sum_{i=1}^{n}\sigma_{i},\quad|\sigma|=|\sigma^{\prime}|+2\sigma_{n+1},
    	\quad\sigma!=\prod_{i=1}^{n+1}(\sigma_{i}!),\quad(x,t)^{\sigma}=\prod_{i=1}^{n}x_{i}^
    	{\sigma_{i}}\cdot t^{\sigma_{n+1}}, \\
    	&D^{\sigma}u=\frac{\partial^{|\sigma^{\prime}|+\sigma_{n+1}}u}{\partial x_{1}^{\sigma_{1}}\cdots\partial x_{n}^{\sigma_{n}}\partial t^{\sigma_{n+1}}},\quad|D^{k}u|=\sum_{|\sigma|=k}|
    	D^{\sigma}u|.
    \end{align*}
\begin{definition}[Parabolic polynomial\ \cite{Lianyy}]\label{poly}

    Let $\mathcal{P}_{k}(k\geq0)$ denote the space of parabolic polynomials of degree at most $k$, defined as all functions of the form: 
    \begin{align}
    	P(x,t)=\sum_{|\sigma|\leq k}\frac{a_{\sigma}}{\sigma!}(x,t)^{\sigma},
    \end{align}
    where $\sigma$ is a multi-index and
    $a_{\sigma} \in \mathbb{R}$ are constant coefficients.

    The associated norm at $(x_{0},t_{0})$ is then defined by:
    \begin{align}    	\|P\|_{(x_{0},t_{0})}=\sum_{j=0}^{k}|D^{j}P(x_{0},t_{0})|.
    \end{align}
\end{definition}
Let 
$$Q_{r}(x_{0},t_{0})=B_{r}(x_{0})\times(t_{0}-r^{2},t_{0}]$$ 
and 
$$\widetilde{Q}_{r}(x_{0},t_{0})=B_{r}(x_{0})\times(t_{0}-r^{2},t_{0}+r^{2}).$$ Now a function of $C^{k,\alpha}$ at a given point $(x_{0},t_{0})$ is defined as follows.
\begin{definition}\label{sec:space-a}

    Let $U\subset\mathbb{R}^{n+1}$ be a bounded domain and $f:U\to\mathbb{R}$ be a function. Given parameters $1\leq s_{1},s_{2}<\infty, 0\leq\alpha<1, k\in \N$ and radius $r_{0}>0$, let $Q_{r_{0}}(x_{0},t_{0})\subset U$ and
    \begin{equation}\label{nu-f}
        \nu_{f}(R)=\sup_{0<r\leq R}\left(\bbint_{t_{0}-r^{2}}^{t_{0}}\left(\bbint_{ B_r(x_{0})}|f(x,t)-P(x,t)|^{s_{1}}dx\right)^{\frac{s_{2}}{s_{1}}}dt\right)^{\frac{1}{s_{2}}},
    \end{equation}
    where $P$ is a given polynomial.

    We say that $f\in C_{s_{1},s_{2}}^{k,\alpha}(x_{0},t_{0};r_{0})$, if there exist a  constant $K=K_{x_{0},t_{0}}>0$ and a polynomial $P=P_{x_{0},t_{0}}\in\mathcal{P}_{k}$ such that for any $r\in[1/2,1/4]$ and each integer $m\geq 1$, it holds
    \begin{align}\label{1.1000}
    		\sum_{i=0}^{m-1}(r_{0}r)^{-i(k+\alpha)}\nu_{f}\left(r_{0}r^{i}\right)\leq Km.
    \end{align}
    The associated norm is defined as:
    \begin{align}
    	\|f\|_{C_{s_{1},s_{2}}^{k,\alpha}(x_{0},t_{0};r_{0})}=\min_{_{K\geq0;P\in\mathcal{P}_{k}}} (K+\|P\|_{({x_{0},t_{0}})}).
    \end{align}

For some open set $U^{\prime}\subset\subset U$, we say that  $f\in C_{s_{1},s_{2}}^{k,\alpha}(\overline{U^{\prime}}),$ if there exists $r_0\in (0,1]$ such that for all $(x,t)\in U^{\prime},$ we have $f\in C_{s_{1},s_{2}}^{k,\alpha}(x,t;r_{0})$, and  
the associated norm
    \begin{align}
    	\|f\|_{C_{s_{1},s_{2}}^{k,\alpha}(\overline{U'})}:=\sup\limits_{(x,t)\in U'}\|f\|_{C_{s_{1},s_{2}}^{k,\alpha}(x,t;r_{0})}<\infty.
    \end{align}
        In addition, if $s_{1}=\infty$, we replace the average integral with respect to $x$ in (\ref{1.1000}) into $\mathop{\esssup}_{B_{r}(x_{0})}$. Similarly, if $s_{2}=\infty$, we replace the average integral with respect to $t$ of (\ref{1.1000}) into $\mathop{\esssup}_{(t_{0}-r^{2},t_{0}]}$.
        We denote $C_{s_{1},s_{2}}^{\infty}(x_{0},t_{0};r_{0})=\cap_{k\geq 0}C_{s_{1},s_{2}}^{k}(x_{0},t_{0};r_{0})$.
    \end{definition}
\begin{mrem}

   If  (\ref{1.1000}) is replaced by 
    \begin{eqnarray}
    	\begin{aligned}
    		\sum_{i=0}^{\infty}(r_{0}r)^{-i(k+\alpha)}\nu_{f}\left(r_{0}r^{i}\right)\leq K,
    	\end{aligned}
    \end{eqnarray}
    we 
    call $f\in C_{s_{1},s_{2}}^{k,\alpha,Dini}(x_{0},t_{0};r_{0})$. 
    Similarly, we say $f \in C_{s_{1},s_{2}}^{k,\alpha,\ln}(x_{0},t_{0};r_{0}),$ if  we replace (\ref{1.1000}) with 
    \begin{eqnarray}
    	\begin{aligned}
    		\sum_{i=0}^{m-1}(r_{0}r)^{-i(k+\alpha)}\nu_{f}\left(r_{0}r^{i}\right)\leq Km^{2}~\text{for any }  m\in\N^*.
    	\end{aligned}
    \end{eqnarray}
\end{mrem}
\begin{mrem}\label{remark 2}
The above definitions of $C_{s_{1},s_{2}}^{k,\alpha}(x_{0},t_{0};r_{0})$, $C_{s_{1},s_{2}}^{k,\alpha,Dini}(x_{0},t_{0};r_{0})$ and $C_{s_{1},s_{2}}^{k,\alpha,\ln}(x_{0},t_{0};r_{0})$ are equivalent to the usual ones; we refer to the appendix for details. Moreover, for all $1\leq s_{1},s_{2}\leq\infty$, the spaces $C_{s_{1},s_{2}}^{k,\alpha}(\overline{U'})$ coincide with $C^{k,\alpha}(\overline{U'})$ as shown in \cite{Lianyy}.  
\end{mrem}
Similar to $C^{\log L,s+\frac{\alpha}{2}}$ defined in \cite{WGC}, we introduce a new pointwise function space:
\begin{definition}
    Let $U\subset\mathbb{R}^{n+1}$ be a bounded domain and $f:U\to\mathbb{R}$ be a function. Given $(x_{0},t_{0})\in U, r_{0}>0, k\in \N$, we say that $f\in C^{k,x-\ln}(x_{0},t_{0};r_{0})$, if there exist a constant $K=K_{x_{0},t_{0}}>0$ and a polynomial $P(x,t)=P_{x_{0},t_{0}}(x,t)\in\mathcal{P}_{k}$ such that for all $(x,t)\in Q_{r_{0}}(x_{0},t_{0})\subset U$ and $0<r\leq r_{0}$, it holds that
    \begin{eqnarray}
    	\begin{aligned}\label{1.12}
    		|f(x,t)-P(x,t)|\leq Kr^{k}\max\left\{1,|\ln
        |x-x_{0}\|\right\},
    	\end{aligned}
    \end{eqnarray}
    where $r=(|x-x_{0}|^{2}+|t-t_{0}|)^{\frac{1}{2}}$.

    The associated norm is defined as:
    \begin{align}
    	\|f\|_{C^{k,x-\ln}(x_{0},t_{0};r_{0})}=\min_{_{K\geq0;P\in\mathcal{P}_{k}}
        } (K+\|P\|_{x_{0},t_{0}}).
    \end{align}

    Given some open set $U^{\prime}\subset\subset U$, we say $f\in C^{k,x-\ln}(\overline{U^{\prime}})$, if there exists a $r_0\in (0,1]$, for all $(x,t)\in U'$, 
    $f\in C^{k,x-\ln}(x,t;r_{0})$, and the associated norm
    \begin{align}
    	\|f\|_{C^{k,x-\ln}(\overline{U'})}:=\sup\limits_{(x,t)\in U'}\|f\|_{C^{k,x-\ln}(x,t;r_{0})}<\infty.
    \end{align}
\end{definition}
Similarly, we can define time-independent function spaces $C^{k,\alpha}_{s_{1}}(x_{0};r_{0})$, $C^{k,\alpha,\ln}_{s_{1}}(x_{0};r_{0})$, $C^{k,\alpha,Dini}_{s_{1}}(x_{0};r_{0})$ and space-independent ones $C^{k,\alpha}_{s_{2}}(t_{0};r_{0})$, $C^{k,\alpha,\ln}_{s_{2}}(t_{0};r_{0})$, $C^{k,\alpha,Dini}_{s_{2}}(t_{0};r_{0})$.
\subsection{Main Results}
    Now we are ready to state our main results on the pointwise regularity for equation \eqref{1.0}. For simplicity, we denote $C_{s_{1},s_{1}}^{k,\alpha}(x_{0},t_{0};r_{0})$ by 
$C_{s_{1}}^{k,\alpha}(x_{0},t_{0};r_{0})$, $C_{\infty}^{k,\alpha}(x_{0},t_{0};r_{0})$ by $C^{k,\alpha}(x_{0},t_{0};r_{0})$ and $C_{s_{1},s_{2}}^{k,\alpha}(x_{0},t_{0};r_{0})$ by $C_{s_{1},s_{2}}^{k+\alpha}(x_{0},t_{0};r_{0})$, similar simplifications work for other pointwise function  spaces.

\begin{mthm}\label{th6}
    	Assume that $f$ is a nonnegative function in $\R^{n}\times\R$ and  $u$ is a nonnegative solution  of (\ref{1.0}). Then there exists a constant $C=C(n,k,\alpha,s)>0$ such that for each $(x_0,t_0) \in \mathbb{R}^n \times \mathbb{R}$, if $f\in C_{1}^{k+\alpha}(x_{0},t_{0};1)$, then
    	\begin{enumerate}
    	\item[(a)] for $2s+\alpha\notin\Z$, $u\in C_{1}^{k+\alpha+2s}(x_{0},t_{0};\frac{1}{2})$, and 
    	\begin{equation}\label{th6-u}
    		\|u\|_{C_{1}^{k+\alpha+2s}(x_{0},t_{0};\frac{1}{2})}\leq C(\|f\|_{C_{1}^{k+\alpha}(x_{0},t_{0};1)}+\|u\|_{L^{\infty}(\widetilde{Q}_{\frac{1+\sqrt{n}}{2\sqrt{n}}}(x_{0},t_{0}))});
    	\end{equation}
    	\item[(b)] for $2s+\alpha\in\Z$,  
        
        when $k+2s+\alpha$ is odd,  $u\in C_{1}^{k+\alpha+2s,x-\ln}(x_{0},t_{0};\frac{1}{2})$, and
    	\begin{align}
    		\|u\|_{C_{1}^{k+\alpha+2s,x-\ln}(x_{0},t_{0};\frac{1}{2})}\leq C(\|f\|_{C_{1}^{k+\alpha}(x_{0},t_{0};1)}+\|u\|_{L^{\infty}(\widetilde{Q}_{\frac{1+\sqrt{n}}{2\sqrt{n}}}(x_{0},t_{0}))});
    	\end{align}

        when $k+2s+\alpha$ is even, $u\in C_{1}^{k+\alpha+2s,\ln}(x_{0},t_{0};\frac{1}{2})$, and
    	\begin{align}
    		\|u\|_{C_{1}^{k+\alpha+2s,\ln}(x_{0},t_{0};\frac{1}{2})}\leq C(\|f\|_{C_{1}^{k+\alpha}(x_{0},t_{0};1)}+\|u\|_{L^{\infty}(\widetilde{Q}_{\frac{1+\sqrt{n}}{2\sqrt{n}}}(x_{0},t_{0}))}).
    	\end{align}
        \item[~]Furthermore, under the stronger condition $f\in C_{1}^{k+\alpha,Dini}(x_{0},t_{0};1)$, 
        there holds  $u\in C_{1}^{k+\alpha+2s}(x_{0},t_{0};\frac{1}{2})$, and
    	\begin{align}
    		\|u\|_{C_{1}^{k+\alpha+2s}(x_{0},t_{0};\frac{1}{2})}\leq C(\|f\|_{C_{1}^{k+\alpha,Dini}(x_{0},t_{0};1)}+\|u\|_{L^{\infty}(\widetilde{Q}_{\frac{1+\sqrt{n}}{2\sqrt{n}}}(x_{0},t_{0}))}).
    	\end{align}
        \end{enumerate}
    \end{mthm}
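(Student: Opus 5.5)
We will work through the integral representation of $u$ via the fractional heat kernel, as the introduction anticipates. After translating $(x_0,t_0)$ to the origin, we fix a smooth cutoff $\eta$ supported in $\widetilde{Q}_{\frac{1+\sqrt{n}}{2\sqrt{n}}}$ with $\eta\equiv1$ on a smaller cylinder. Let $\Phi_s(x,t)=c\,t^{s-1-\frac n2}e^{-|x|^2/4t}\chi_{t>0}$ be the fundamental solution of $(\partial_t-\Delta)^s$. We write $u=w+h$, where $w=\Phi_s*(\eta f)$ is the internal part and $h=u-w$ satisfies $(\partial_t-\Delta)^s h=0$ near the origin.

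\textbf{External part.} Since $h$ is $s$-caloric in the cylinder, we express it locally through a representation formula. This has the form $h=\Phi_s*\big((1-\eta)(\partial_t-\Delta)^s(\ldots)\big)$, or equivalently it is the nonlocal tail $\int(1-\eta)u\,K(x-y,t-\tau)$ with a kernel $K$ that is smooth away from the diagonal. For $(x,t)$ near the origin, every derivative of $K$ in $(x,t)$ is bounded by a multiple of $K$ itself evaluated at a nearby point; this is the ``perturbation of the kernel'' mentioned in the introduction. Nonnegativity of $u$ then lets us bound the tail by values of $u$ at nearby points, and hence by $\|u\|_{L^\infty(\widetilde Q)}$ together with $f$ through the equation. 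The conclusion is that $h\in C^\infty$ near the origin, with every pointwise norm controlled by $\|u\|_{L^\infty}+\|f\|$. We take the Taylor polynomial of $h$ of any order.

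\textbf{Internal part.} We write $\eta f=P_f+(\eta f-P_f)$, where $P_f$ is the degree-$k$ polynomial from Definition~\ref{sec:space-a}. The term $u_P:=\Phi_s*(\eta P_f)$ is smooth near $0$, since derivatives can fall on $\eta$ after a change of variables. For the remainder $g=\eta f-P_f$ we use the dyadic decomposition adapted to the new definition. Set $g_i=g\,\chi_{Q_{r^i}\setminus Q_{r^{i+1}}}$, so that $\|g_i\|_{L^1}\lesssim r^{i(n+2)}\nu_f(r^i)$. We then split
\[
\Phi_s*g=\sum_{i<m}\Phi_s*g_i+\sum_{i\ge m}\Phi_s*g_i =: S_r+T_r .
\]
For $(x,t)\in Q_{r^m}$, the far pieces $i<m$ are smooth there. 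Taylor-expanding $\Phi_s$ to order $k+2s+\alpha$ about the origin produces a polynomial $P_i$. The remainder is bounded by $\|g_i\|_{L^1}\,|(x,t)|^{k+\lceil\cdot\rceil}\,r^{-i(n+2+k+\lceil\cdot\rceil-2s)}$, which uses the homogeneity $\Phi_s(\lambda x,\lambda^2t)=\lambda^{2s-n-2}\Phi_s$. The near pieces $i\ge m$ are estimated directly in $L^1$ average over $Q_{r^m}$, using $\|\Phi_s\|_{L^1(Q_\rho)}\sim\rho^{2s}$.

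Summing these bounds, the $L^1$ oscillation $\nu_u(r^m)$ becomes a discrete convolution of the weighted sequence $a_i=r^{-i(k+\alpha)}\nu_f(r^i)$ against geometric factors $r^{(m-i)\beta}$. Here the exponents $\beta$ are the gaps between $k+\alpha+2s$ and the nearest parabolic degrees. Summed over $m$, this gives condition \eqref{1.1000} for $u$ with order $k+\alpha+2s$ whenever $2s+\alpha\notin\Z$, because every gap is positive.

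When $2s+\alpha\in\Z$, one gap vanishes. The corresponding Taylor coefficients $\sum_{i<m}\int g_i\,D^\sigma\Phi_s$ with $|\sigma|=k+2s+\alpha$ then do not converge, and only grow like $Km$. Feeding this back yields the $Km^2$ bound, i.e.\ the $\ln$ space. The parity split comes from which monomials reach the critical degree. If $k+2s+\alpha$ is odd, the critical homogeneous parts of $D^\sigma\Phi_s$ carry an odd number of $x$-derivatives. Their averages over parabolic shells in $t$ involve the directional average of the kernel, and they lose only a $\ln|x|$ factor rather than a $\ln r$ factor, which gives $C^{\cdot,x-\ln}$. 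Under the Dini hypothesis the coefficient series converge and the bound becomes $K$ uniformly, which recovers $C^{k+\alpha+2s}$.

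\textbf{Main obstacle.} The delicate step is the critical case: we must show that the odd-order critical coefficients vanish after averaging over time slices, so that only the $x$-logarithm survives. We would compute $\int_{\partial B_\rho}x^\sigma\,\partial^\sigma\Phi_s\,dS$ explicitly using the Gaussian structure and the directional-average identity. Second, the kernel comparison for the external part must be uniform on the whole cube $\widetilde Q_{\frac{1+\sqrt n}{2\sqrt n}}$, which is why that radius appears in the statement.
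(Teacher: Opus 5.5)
Your architecture is the paper's: split the global representation into an exterior and an interior part, control the exterior by kernel perturbation, and handle $f-P$ with a near/far dyadic split, Taylor-expanding the far pieces. In the non-critical, even-critical and Dini cases your convolution bookkeeping reproduces Lemmas \ref{lemma 3.8} and \ref{lemma 3.10}. One small fix: the shell $i=m-1$ touches $\partial Q_{r^m}$, so it belongs with the near pieces. This is why the paper expands $T_{\eta^{i-1}}$ only on $Q_{\eta^{i}}$.

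\textbf{The odd critical case.} The step you call the main obstacle would fail, and it is not needed. You plan to show that the critical coefficients $\sum_{i<m}\int g_i(y,\tau)\,(D^\sigma\Phi_s)(-y,-\tau)\,dy\,d\tau$ with $|\sigma|=\gamma$ vanish after averaging. This cannot work. $D^\sigma\Phi_s$ is odd in $y$ when $|\sigma'|$ is odd, but $g_i$ is a piece of $f-P$ with no symmetry, so nothing cancels. These coefficients are only $O(Km)$ and in general do not converge. The paper's mechanism is elementary. Since $|\sigma|=|\sigma'|+2\sigma_{n+1}=\gamma$ is odd, $|\sigma'|\ge 1$, so every critical monomial contains a factor $x_j$ and $|(x,t)^\sigma|\le |x|\,|(x,t)|^{\gamma-1}$. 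Expand only to degree $\gamma-1$ and leave the degree-$\gamma$ terms, with their $O(Ki)$ coefficients, in the remainder. This gives $|T_{\eta^{i-1}}-P_i|\le CKi\,\eta^{i(\gamma-1)}|x|$ on $Q_{\eta^i}$, which is Lemma \ref{lemma 3.10}(1), and that is the whole $x$-$\ln$ statement. The gain is the factor $|x|/r$ in front of the logarithm, not a better logarithm: on $Q_r$ one has $|\ln|x||\ge|\ln r|$. The only ``directional'' device in the paper is the choice of one shift $\eta_j$ per orthant in Lemma \ref{translation}, not a spherical-average identity.

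\textbf{The exterior part.} Your paragraph conflates two representations, and only one works. There is no local Poisson-type representation of $s$-caloric functions for $(\partial_t-\Delta)^s$. Moreover, a tail $\int(1-\eta)u\,K$ could not be bounded by $\|u\|_{L^\infty(\widetilde Q)}$ alone. What works is the global identity \eqref{inteq}, which is available because $u,f\ge0$. It gives $h-c=\Phi_s*((1-\eta)f)$ with a nonnegative density. For $z$ outside a cylinder $\widetilde Q_\rho$, Lemma \ref{translation} bounds $|D^\sigma\Phi_s(z)|$ by $C\rho^{-|\sigma|}$ times a finite sum of values of $\Phi_s$ at spatially and temporally shifted points. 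Integrating this against $(1-\eta)f\ge0$ gives $|D^\sigma h(x,t)|\le C\rho^{-|\sigma|}\big(\sum_j (h-c)(x+\eta_j,t)+(h-c)(x,t+\rho^2/n)\big)$. Finally, $0\le h-c\le u-c$ because $\Phi_s*(\eta f)\ge0$; the paper normalizes $c=0$. So the sign that does the work in the derivative bound is that of $f$, and the exterior estimate involves only $\|u\|_{L^\infty}$, with no contribution from $f$.
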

Based on the proof of the above theorem, 
by rescaling, we can derive the following more general result. Here we only state the case $f\in C_{s_{1},s_{2}}^{k+\alpha,Dini}(x_{0},t_{0};r_0)$, similar conclusions hold for other cases when $f\in C_{s_1,s_2}^{k+\alpha}(x_{0},t_{0};r_0).$

    \begin{mcor}\label{cor1}
    	For some $k\in\N,\alpha\in[0,1)$ and $(x_0,t_0) \in \mathbb{R}^n \times \mathbb{R}$, assume that $f$ is a nonnegative function in $\R^{n}\times\R$ with $f\in C_{s_{1},s_{2}}^{k+\alpha,Dini}(x_{0},t_{0};r_0)$ and  $u$ is a nonnegative solution of (\ref{1.0}). Then there exists a positive constant $C=C(n,k,\alpha,s,s_{1},s_{2})$ such that
    	\begin{align}
    		\|u\|_{C^{k+\alpha+2s}_{s_{1},s_{2}}(x_{0},t_{0};\frac{r_{0}}{2})}\leq C(\|f\|_{C_{s_{1},s_{2}}^{k+\alpha,Dini}(x_{0},t_{0};r_{0})}+r_{0}^{-(k+\alpha+2s)}\|u\|_{L^{\infty}(\widetilde{Q}_{\frac{1+\sqrt{n}}{2\sqrt{n}}r_{0}}(x_{0},t_{0}))}).
    	\end{align}
     \end{mcor}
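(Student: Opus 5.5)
The plan is to deduce Corollary \ref{cor1} from Theorem \ref{th6} by parabolic rescaling together with the monotonicity of the integral averages in $s_1,s_2$. Fix $(x_0,t_0)$ and $r_0>0$. Define
\[
\tilde u(y,\tau)=u(x_0+r_0y,\;t_0+r_0^2\tau),\qquad \tilde f(y,\tau)=r_0^{2s}f(x_0+r_0y,\;t_0+r_0^2\tau).
\]
A change of variables in the singular integral \eqref{nonlocaloper} shows that $(\partial_t-\Delta)^s$ is homogeneous of degree $2s$ under the parabolic dilation $(x,t)\mapsto(r_0x,r_0^2t)$ and commutes with translations. Hence $(\partial_\tau-\Delta_y)^s\tilde u=\tilde f$ in $\R^n\times\R$. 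Both $\tilde u$ and $\tilde f$ remain nonnegative, $\tilde u$ is still a classical solution, and the slow-growth class $\mathcal L$ is preserved. The cylinder $\widetilde Q_{\rho r_0}(x_0,t_0)$ maps onto $\widetilde Q_{\rho}(0,0)$, so $\|\tilde u\|_{L^\infty(\widetilde Q_\rho)}=\|u\|_{L^\infty(\widetilde Q_{\rho r_0})}$.

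Next I would track the pointwise norms. If $P$ is the approximating polynomial for $f$ at $(x_0,t_0)$, then $\tilde P(y,\tau)=r_0^{2s}P(x_0+r_0y,t_0+r_0^2\tau)$ lies in $\mathcal P_k$. Moreover $\nu_{\tilde f}(\rho)=r_0^{2s}\nu_f(r_0\rho)$, because averages are dilation invariant. Substituting into the Dini sum $\sum_i (r_0 r)^{-i(k+\alpha)}\nu_f(r_0r^i)$ produces exactly the sum for $\tilde f$ at scale $1$, up to the factors $r_0^{2s}$ and $r_0^{k+\alpha}$. A derivative of order $|\sigma|$ of $\tilde P$ picks up a factor $r_0^{|\sigma|+2s}$.

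The same bookkeeping applies to $u$ at scale $r_0/2$ with homogeneity $k+\alpha+2s$. In the end one should obtain
\[
\|u\|_{C^{k+\alpha+2s}_{s_1,s_2}(x_0,t_0;\frac{r_0}{2})}\le C\,r_0^{-(k+\alpha+2s)}\,\|\tilde u\|_{C^{k+\alpha+2s}_{s_1,s_2}(0,0;\frac12)}
\]
for $r_0\le1$. Similarly, $\|\tilde f\|\lesssim r_0^{k+\alpha+2s}\|f\|$ in the corresponding Dini norm. Combining these with $\|\tilde u\|_{L^\infty}=\|u\|_{L^\infty}$ gives the stated inequality. This power matching is routine but must be done carefully. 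In particular, the sum indices in Definition \ref{sec:space-a} carry the factor $r_0$ inside $(r_0r)^{-i(k+\alpha)}$, so I would first check that the definition is scale-consistent; the equivalence with the usual pointwise spaces from the appendix can be invoked if needed.

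The main obstacle is passing from the $L^1$ setting of Theorem \ref{th6} to general exponents $s_1,s_2$. Since averages satisfy Jensen/Hölder monotonicity, $\nu^{(1,1)}\le\nu^{(s_1,s_2)}$, so $f\in C^{k+\alpha,Dini}_{s_1,s_2}$ implies $f\in C^{k+\alpha,Dini}_1$, and Theorem \ref{th6} yields $u\in C_1^{k+\alpha+2s}$. Upgrading the conclusion from $L^1$ back to $L^{s_1,s_2}$ averages does not follow from the statement alone. For this I would rerun the proof of Theorem \ref{th6}, which the corollary explicitly says it rests on. In that proof $u$ is split into an external part, which is smooth near $(x_0,t_0)$ and hence controlled in every $L^{s_1,s_2}$ average, and an internal potential of $f$ against the fractional heat kernel. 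For the internal pieces $S_r,T_r$, the dyadic estimates would be redone with the $L^{s_1,s_2}$ average of $f-P$ in place of the $L^1$ one, using Minkowski's integral inequality and Young's inequality for the kernel restricted to dyadic annuli. Because the Dini hypothesis is summable, these dyadic contributions add up to a constant multiple of $\|f\|_{C^{k+\alpha,Dini}_{s_1,s_2}}$, with no logarithmic loss, giving $\tilde u\in C^{k+\alpha+2s}_{s_1,s_2}(0,0;\frac12)$.
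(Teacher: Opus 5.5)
Your route (parabolic rescaling, then rerunning the proof of Theorem \ref{th6} with mixed averages) is the one the paper indicates with ``by rescaling''. The $s_1,s_2$ upgrade is fine. The pieces $T_r$, $v_1$ and $u_P$ are bounded in $L^\infty$ using only $L^1$ averages of $f-P$, and these are dominated by the $L^{s_1,s_2}$ averages. For $S_r$, the mixed-norm Young inequality with $\|K_{-s}\|_{L^1(B_{2r}\times(0,r^2))}\le Cr^{2s}$ gives the needed bound $Cr^{2s}\nu_f(r)$ directly.

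\textbf{Gap: the rescaling of $\|f\|$.} Your claim $\|\tilde f\|\lesssim r_0^{k+\alpha+2s}\|f\|$ contradicts your own correct observation that $D^\sigma\tilde P(0,0)=r_0^{|\sigma|+2s}D^\sigma P(x_0,t_0)$. The polynomial part of the norm is
\[
\|\tilde P\|_{(0,0)}=\sum_{j}r_0^{\,j+2s}\,|D^jP(x_0,t_0)|,
\]
and its low-order terms carry only $r_0^{2s}$. For example, if $k+\alpha>0$ and $f\equiv1$ on $Q_{r_0}(x_0,t_0)$, then $\|f\|=1$ but $\|\tilde f\|\approx r_0^{2s}\gg r_0^{k+\alpha+2s}$. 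Under the scale-consistent reading $(r_0r^i)^{-(k+\alpha)}$ of the defining sum, the $K$-part does scale exactly like $r_0^{k+\alpha+2s}$; the polynomial part does not. Honest bookkeeping from Theorem \ref{th6} therefore only gives, for $r_0\le1$,
\[
\|u\|\le C\Big(r_0^{-(k+\alpha)}\|f\|+r_0^{-(k+\alpha+2s)}\|u\|_{L^\infty}\Big).
\]
This is weaker than the corollary as $r_0\to0$.

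\textbf{Missing ingredient: positivity.} Use it to absorb the polynomial part into the $L^\infty$ term. Since $f\ge0$, you have $u\ge\int f\chi_{Q_{r_0}(x_0,t_0)}K_{-s}$; this is the same positivity that bounds $v_1$ by $u$ in Proposition \ref{v-E}. Evaluate at $(x_0,t_0+r_0^2/8)$. This point lies in $\widetilde Q_{\frac{1+\sqrt n}{2\sqrt n}r_0}(x_0,t_0)$, and there $K_{-s}\ge c\,r_0^{-(n+2-2s)}$ on $Q_{r_0}(x_0,t_0)$. This gives $r_0^{2s}\bbint_{Q_{r_0}}f\le C\|u\|_{L^\infty}$. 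Combining this with equivalence of norms on $\mathcal P_k$ yields
\[
r_0^{2s}\sum_j r_0^{j}|D^jP(x_0,t_0)|\le C\,r_0^{2s}\bbint_{Q_{r_0}}|P|\le C\Big(\|u\|_{L^\infty}+r_0^{2s}\nu_f(r_0)\Big).
\]
Hence $\|\tilde f\|\le C\big(r_0^{k+\alpha+2s}\|f\|+\|\tilde u\|_{L^\infty}\big)$, and your scaling argument then closes.

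Also state the restriction $r_0\le1$ explicitly. The inequality fails for large $r_0$: take $f\equiv0$ and $u\equiv$ const.
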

The pseudodifferential equation (\ref{1.0}) is equivalent to the integral equation 
\begin{equation} \label{inteq}
    	u(x,t) = c + \int_{-\infty}^t \int_{\mathbb{R}^n} f(y,\tau) K_{-s}(x-y, t-\tau) dy d\tau,
    \end{equation}
in the case when $f(x,t)$ and $u(x,t)$ is nonnegative (see \cite{WGC}), where 
\[K_{-s}(x-y,t-\tau):=C_{n,s}
\frac{e^{-\frac{|x-y|^2}{4(t-\tau)}}}{(t-\tau)^{\frac{n}{2} + 1 - s}}.   \]
This integral equation is an essential tool that we will employ to derive a series of pointwise estimates for the solutions of (\ref{1.0}).
    \begin{mrem}
    If we directly consider the solution of integral equation (\ref{inteq}), we can obtain similar results by requiring $f(x,t)\in C^{k,\alpha}(x_{0},t_{0})$ with fast decay at infinity such as
    \begin{align}
    \int_{-\infty}^{\infty} \int_{\mathbb{R}^n} \frac{|f(x,t)|}{|t|^{\frac{n}{2}+1-s}+|x|^{n+2-2s}}\operatorname{d}\!x\operatorname{d}\!t<\infty.
    \end{align}
    \end{mrem}
     
    In particular, for the stationary case \eqref{Wu}, we derive stronger pointwise regularity results than those obtained in \cite{LW1} by imposing the additional assumptions $u, f\geq 0$. Note that in this case, $\mathcal{L}(\R^n\times\mathbb{R})$ is reduced to $\mathcal{L}^{2s}$ defined in \cite{LW1}.
    \begin{mcor}\label{x}
    	For some $k\in\N,\alpha\in[0,1)$ and $x_{0}\in\R^{n}$, suppose $f$ is a nonnegative function in $\R^{n}$ with $f\in C_{1}^{k+\alpha}(x_{0};1)$ and $u\in \mathcal{L}^{2s}$ is a nonnegative solution of \eqref{Wu} in $\R^{n}$. 
        Then there exists a positive constant $C=C(n,k,\alpha,s)$ such that
        \begin{enumerate}
            \item[(a)] for $2s+\alpha\notin \Z,$ $u\in C_{1}^{k+\alpha+2s}(x_{0};\frac{1}{2}),$ and
    	\begin{align}
    		\|u\|_{C_{1}^{k+\alpha+2s}(x_{0};\frac{1}{2})}\leq C(\|f\|_{C_{1}^{k+\alpha}(x_{0};1)}+\|u\|_{L^{\infty}(B_{1}(x_{0}))});
    	\end{align}
    	\item[(b)] for $k+2s+\alpha\in \Z$, $u\in C_{1}^{k+\alpha+2s,\ln}(x_{0};\frac{1}{2}),$
    	and
    	\begin{align}
    		\|u\|_{C_{1}^{k+\alpha+2s,\ln}(x_{0};\frac{1}{2})}\leq C(\|f\|_{C_{1}^{k+\alpha}(x_{0};1)}+\|u\|_{L^{\infty}(B_{1}(x_{0}))}).
    	\end{align}
    	\item[~] In addition, under the stronger assumption $f\in C_{1}^{k+\alpha,Dini}(x_{0};1)$, it holds that $u\in C_{1}^{k+\alpha+2s}(x_{0};\frac{1}{2}),$
    	and
    	\begin{align}
    		\|u\|_{C_{1}^{k+\alpha+2s}(x_{0};\frac{1}{2})}\leq C(\|f\|_{C_{1}^{k+\alpha,Dini}(x_{0};1)}+\|u\|_{L^{\infty}(B_{1}(x_{0}))}).
    	\end{align}
        \end{enumerate}
    \end{mcor}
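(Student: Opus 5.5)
The plan is to deduce Corollary \ref{x} from Theorem \ref{th6} by regarding the stationary problem as a time-independent parabolic one.

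\textbf{Step 1: lift to space-time.} Set $\tilde u(x,t):=u(x)$ and $\tilde f(x,t):=f(x)$. By \eqref{1.3-1}, $(\partial_t-\Delta)^s\tilde u=(-\Delta)^s u=f=\tilde f$ in $\R^n\times\R$. Both $\tilde u$ and $\tilde f$ are nonnegative.

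\textbf{Step 2: check that $\tilde u$ is a classical solution.}
\begin{itemize}
\item \emph{Membership in $\mathcal{L}$.} The Gaussian integral $\int_{-\infty}^t (t-\tau)^{-n/2-1-s}e^{-|x|^2/4(t-\tau)}d\tau$ is comparable to $|x|^{-n-2s}$ for large $|x|$. So $u\in\mathcal{L}^{2s}$ gives $\tilde u\in\mathcal{L}(\R^n\times\R)$, as the paper remarks.
\item \emph{Local regularity.} $u$ is a classical solution of \eqref{Wu}, so $u\in C^{2s+\epsilon}_{loc}$. Time-independence then gives $\tilde u\in C^{2s+\epsilon,s+\epsilon}_{x,t,loc}$.
\end{itemize}

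\textbf{Step 3: transfer the hypothesis on $f$.} Let $P\in\mathcal{P}_k$ be the spatial polynomial attached to $f\in C^{k+\alpha}_1(x_0;1)$. Regard it as a parabolic polynomial with no $t$-terms. Because $\tilde f-P$ does not depend on $t$, the time average in \eqref{nu-f} is trivial. Hence $\nu_{\tilde f}(R)=\nu_f(R)$, so $\tilde f\in C_1^{k+\alpha}(x_0,t_0;1)$ with the same norm, and the Dini version transfers identically.

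\textbf{Step 4: apply Theorem \ref{th6}.} For any $t_0$, the theorem gives a parabolic polynomial $\tilde P$ of degree $\le k+\alpha+2s$ with
$$\sup_{0<r\le R}\bbint_{t_0-r^2}^{t_0}\bbint_{B_r(x_0)}|u(x)-\tilde P(x,t)|\,dx\,dt$$
controlled at the appropriate rate. The right-hand side involves $\|u\|_{L^\infty(\widetilde Q_{(1+\sqrt n)/(2\sqrt n)}(x_0,t_0))}$. Since $(1+\sqrt n)/(2\sqrt n)\le1$, this is bounded by $\|u\|_{L^\infty(B_1(x_0))}$.

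\textbf{Step 5: remove the time dependence of $\tilde P$.} Define the spatial polynomial $P^*(x):=\bbint_{t_0-r^2}^{t_0}\tilde P(x,t)\,dt$. By Jensen (averaging in $t$),
$$\bbint_{B_r}|u-P^*|\,dx\le \bbint\bbint|u-\tilde P|.$$
The polynomial $P^*$ depends on $r$, however. I would handle this in one of two ways:
\begin{itemize}
\item \emph{Uniqueness argument.} The polynomial achieving the decay rate is unique. Comparing $\tilde P(\cdot,t_0)$ with $\tilde P(\cdot,t_0+h)$ shows that the $t$-coefficients must vanish. Concretely, apply the estimate at $t_0$ and at $t_0-\delta$; both are valid because $u$ is stationary. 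The difference of the two polynomials is then small at every scale, which forces $\partial_t\tilde P=0$.
\item \emph{Direct choice.} Simply take $P^*=\tilde P(\cdot,t_0)$. The $t$-terms contribute $O(r^{2j})$ with $2j\le k+\alpha+2s$, but these are not small enough at the critical scale. So the first route is preferable.
\end{itemize}

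\textbf{Step 6: the logarithmic case.} When $k+2s+\alpha\in\Z$, Theorem \ref{th6}(b) gives either the $\ln$ or the $x$-$\ln$ space. In the stationary setting the $x$-$\ln$ bound $r^{k}\max\{1,|\ln|x-x_0||\}$ reduces, after $L^1$ averaging over balls, to the $m^2$-type summation defining $C^{k+\alpha+2s,\ln}_1(x_0;\frac12)$. I expect this conversion to be routine via the appendix equivalences.

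\textbf{Main obstacle.} The delicate step is Step 5: showing that the time-dependent part of $\tilde P$ can be discarded without losing the sharp rate. I would handle it by the translation-invariance and uniqueness argument above.
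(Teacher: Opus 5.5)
Your route is the one the paper has in mind. The paper gives no separate proof of Corollary~\ref{x}; it presents it as the time-independent case of Theorem~\ref{th6}, noting only that $\mathcal{L}(\R^n\times\R)$ reduces to $\mathcal{L}^{2s}$. Your Steps 1--4 carry this out correctly. You are also right that the $t$-terms of $\tilde P$ need an argument, which the paper never supplies: the pieces $T_r$ and $u_P$ in the proof of Theorem~\ref{th6} are built from the space-time sets $Q_1\setminus Q_r$ and the cutoff $\psi$, so they need not be stationary.

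The argument you sketch in Step 5, however, fails as written. Comparing the estimates at base points $(x_0,t_0)$ and $(x_0,t_0-\delta)$ controls $\tilde P(x,t)-\tilde P(x,t+\delta)$ only on cylinders $Q_r(x_0,t_0-\delta)$ with $r^2\gtrsim\delta$, because only these lie inside a comparable cylinder centred at $(x_0,t_0)$. So the difference is not ``small at every scale''. Moreover, uniqueness of the approximating polynomial requires a rate $o(r^\gamma)$, where $\gamma$ is its degree. In case (b), and in the Dini case (stated for $2s+\alpha\in\Z$), the rate is $r^\gamma|\ln r|$ or $r^\gamma$. There, $t$-terms of parabolic degree exactly $\gamma$ can be added freely, so $\partial_t\tilde P=0$ need not hold.

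No translation is needed; stationarity on a single cylinder suffices. Write $\tilde P(x,t)=\sum_{j\ge0}(t-t_0)^jp_j(x)$ with $\deg p_j\le\gamma-2j$, and let $q_r(x)=\bbint_{t_0-r^2}^{t_0}\tilde P(x,t)\,dt$ be your $P^*$. Since $\tilde u$ does not depend on $t$, Jensen and the triangle inequality give
\[
\bbint_{Q_r(x_0,t_0)}|\tilde P-q_r|\le 2\bbint_{Q_r(x_0,t_0)}|\tilde u-\tilde P|\le 2K\omega(r),
\]
where $\omega(r)$ is the rate from Theorem~\ref{th6}. Rescale by $x=x_0+ry$, $t=t_0+r^2\theta$. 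Then $\tilde P-q_r=\sum_{j\ge1}r^{2j}(\theta^j-c_j)\,p_j(x_0+ry)$ with $c_j=\bbint_{-1}^0\theta^j\,d\theta$. The map $(p_j)_{j\ge1}\mapsto\sum_{j\ge1}(\theta^j-c_j)p_j$ is injective on a finite-dimensional space, so equivalence of norms gives $r^{2j+l}|D^lp_j(x_0)|\le CK\omega(r)$ for all $j\ge1$ and all $l$.

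In case (a), $2j+l\le\gamma<k+\alpha+2s$, so letting $r\to0$ forces $p_j\equiv0$ for $j\ge1$. Hence $\tilde P=p_0=\tilde P(\cdot,t_0)$, which is exactly the ``direct choice'' you rejected. This gives $\bbint_{B_r}|u-p_0|=\bbint_{Q_r}|\tilde u-\tilde P|$ and $\|p_0\|_{x_0}\le\|\tilde P\|_{(x_0,t_0)}$.

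In the integer cases, the same argument removes all $t$-terms of parabolic degree $<\gamma$. Those of degree exactly $\gamma$ contribute at most $C\|\tilde P\|_{(x_0,t_0)}r^\gamma$ on $Q_r$, which is within the allowed rate, so you simply drop them. Your Step 6 needs this addition. Your translation idea can be rescued in case (a) by working only at scale $r\simeq\sqrt\delta$ and letting $\delta\to0$, but that just reproduces this computation.
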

    While for the space‑independent case, we obtain a novel analogous result. 
    \begin{mcor}\label{t}
      For some $k\in\N,\alpha\in[0,1)$ and $t_{0}\in\R$, assume that $f$ is a nonnegative function in $\R$ with $f\in C_{1}^{k+\alpha}(t_{0};1)$ and that $u$ is a nonnegative solution of $$\partial_{t}^{s}u(t)=f(t), ~~t\in \R.$$
      Then there exists a positive constant $C=C(n,k,\alpha,s)$ such that
        \begin{enumerate}
            \item[(a)] for $s+\alpha\notin \Z$, $u\in C_{1}^{k+\alpha+s}(t_{0};\frac{1}{2}),$
    	and
    	\begin{align}
    		\|u\|_{C_{1}^{k+\alpha+s}(t_{0};\frac{1}{2})}\leq C(\|f\|_{C_{1}^{k+\alpha}(t_{0};1)}+\|u\|_{L^{\infty}((t_{0}-1,t_{0}+1))});
    	\end{align}
    	\item[(b)] for $s+\alpha\in \Z$, $u\in C_{1}^{k+\alpha+s,\ln}(t_{0};\frac{1}{2}),$
    	and
    	\begin{align}
    		\|u\|_{C_{1}^{k+\alpha+s,\ln}(t_{0};\frac{1}{2})}\leq C(\|f\|_{C_{1}^{k+\alpha}(t_{0};1)}+\|u\|_{L^{\infty}((t_{0}-1,t_{0}+1))}).
    	\end{align}
    	\item[~]Moreover, under the stronger condition $f\in C_{1}^{k+\alpha,Dini}(t_{0};1)$, it holds that $u\in C_{1}^{k+\alpha+s}(t_{0};\frac{1}{2}),$ and
    	\begin{align}
    		\|u\|_{C_{1}^{k+\alpha+s}(t_{0};\frac{1}{2})}\leq C(\|f\|_{C_{1}^{k+\alpha;Dini}(t_{0};1)}+\|u\|_{L^{\infty}((t_{0}-1,t_{0}+1))}).
    	\end{align}
        \end{enumerate}
    \end{mcor}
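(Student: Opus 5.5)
The plan is to obtain Corollary \ref{t} as the special case of Theorem \ref{th6} in which neither $u$ nor $f$ depends on $x$. If $u=u(t)$ and $f=f(t)$, then by \eqref{1.4-1} the operator $(\partial_t-\Delta)^s u$ reduces to the Marchaud derivative $\partial_t^s u$. So $u$ is a nonnegative solution of \eqref{1.0} with a nonnegative right-hand side. Its membership in $\mathcal{L}(\R^n\times\R)$ reduces, after the Gaussian integration in $x$, to the one-dimensional slow-growth condition $\int_{-\infty}^t |u(\tau)|(1+(t-\tau))^{-1-s}\,d\tau<\infty$, which is the natural hypothesis for $\partial_t^s$.

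Rather than quote Theorem \ref{th6} verbatim, I would rerun its proof with the time-only kernel. Its hypothesis would only give parabolic order $k+\alpha+2s$, and the parabolic polynomial class counts $t$ with weight $2$; the correct scaling for $\partial_t^s$ is order $s$ in $t$. The proof would go as follows.

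\begin{enumerate}
\item Use the integral representation \eqref{inteq}. Integrating $K_{-s}$ over $y\in\R^n$ gives the Riemann--Liouville form
\[
u(t)=c+c_s\int_{-\infty}^t f(\tau)(t-\tau)^{s-1}\,d\tau .
\]
\item Split this integral into an external part over $\tau<t_0-1$ and an internal part over $(t_0-1,t]$.
\item For the internal part, write $f=P+(f-P)$ with $P\in\mathcal P_k$ the Taylor polynomial of $f$ at $t_0$. The term $u_P$ is an explicit integral of a polynomial against $(t-\tau)^{s-1}$, which is smooth near $t_0$ apart from the boundary-at-$t_0-1$ piece, itself smooth for $t$ near $t_0$. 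The remainder $f-P$ is decomposed dyadically into the pieces $S_r$ and $T_r$ of Definition \ref{DEF}. On each dyadic layer $t_0-\tau\sim r^i$ one Taylor-expands the kernel $(t-\tau)^{s-1}$ in $t$ around $t_0$ to order $k$. The Taylor coefficients, summed over $i$, give a polynomial. The remainders are bounded by $r^{i(k+\alpha+s)}\nu_f(r^i)$, using the $L^1$ quantity \eqref{nu-f} with $s_2=1$. Summing these with the weighted sums \eqref{1.1000} yields the $C_1^{k+\alpha+s}$ bound.
\item For the external part, use the nonnegativity of $f$, as the paper does for \eqref{1.0}. The kernel's $t$-derivatives of any order on $\tau<t_0-1$, $t\in(t_0-\tfrac12,t_0+\tfrac12)$, are controlled by the kernel value itself at nearby times. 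This is the perturbation device for the kernel. Combined with the representation and positivity, it bounds the external part in $C^\infty$ by $\|u\|_{L^\infty((t_0-1,t_0+1))}$. The constant $c$ is absorbed the same way.
\end{enumerate}

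The main obstacle is the borderline case $s+\alpha\in\Z$, that is, $s+\alpha=1$. There the dyadic coefficients $\int_{\text{layer}_i}(f-P)(t_0-\tau)^{s-1-j}\,d\tau$ for the critical index $j=k+1$ produce terms of size $\nu_f(r^i)r^{-i(k+\alpha)}$ rather than geometrically decaying ones. Their partial sums are only $O(m)$ under \eqref{1.1000}. This yields the logarithmic loss, so one gets the space $C_1^{k+\alpha+s,\ln}$. Under the Dini hypothesis these sums converge, and the full $C_1^{k+\alpha+s}$ estimate follows. Only the $\ln$ alternative arises here, not the $x$-$\ln$ one, because there is no spatial variable to split parity cases.
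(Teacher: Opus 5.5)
Your plan follows the paper's route. The paper gives no separate argument for this corollary; it intends the one-variable rerun of the $v_1$, $S_r$, $T_r$, $u_P$ decomposition behind Theorem~\ref{th6}, with the Riemann--Liouville kernel $(t-\tau)^{s-1}$ in place of $K_{-s}$. Your remark that Theorem~\ref{th6} cannot be quoted verbatim is correct: its parabolic weighting of $t$ produces order $k+\alpha+2s$ and the criticality condition $2s+\alpha\in\Z$.

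One step needs fixing. In the $T_r$ analysis you must expand $(t-\tau)^{s-1}$ in $t$ to order $N=k+\lfloor\alpha+s\rfloor$, which is the analogue of the paper's $\gamma=k+\lfloor\alpha+2s\rfloor$, not to order $k$. When $1<\alpha+s<2$, stopping at order $k$ leaves a remainder whose layer-$i$ piece has size $|t-t_0|^{k+1}r^{i(s-k-1)}\nu_f(r^i)\lesssim|t-t_0|^{k+1}r^{i(\alpha+s-1)}$. This sums to $O(|t-t_0|^{k+1})$ and in general no better. The reason is that the coefficient $\int_{t_0-1}^{t_0}(f-P)(\tau)(t_0-\tau)^{s-2-k}\,d\tau$ converges to a generally nonzero limit, so it must be put into the polynomial.

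With the right order, the layer-$i$ remainder is $|t-t_0|^{N+1}r^{i(s-N-1)}\nu_f(r^i)$, not $r^{i(k+\alpha+s)}\nu_f(r^i)$ as you wrote. Summing as in Lemma~\ref{lemma 3.8} then gives $|t-t_0|^{k+\alpha+s}$. In the critical case $\alpha+s=1$, your choice (order $k$, critical index $j=k+1$) is the right one.

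Finally, the external part is simpler than Lemma~\ref{translation} suggests. For $\tau<t_0-1$ and $t>t_0-\frac12$ one has $|\partial_t^j(t-\tau)^{s-1}|\le C_j(t-\tau)^{s-1}$ directly. Hence $|v^{(j)}(t)|\le C_j v(t)$ at the same time $t$, and no perturbation in time is needed.
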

    Next, by the equivalence between local and pointwise regularity explained in Remark \ref{remark 2} and Corollary \ref{cor1}, we further obtain the classical (local) Schauder estimates for equation \eqref{1.0}. This recovers the results of \cite{WGC} and, moreover, provides the corresponding estimates for the case $f\in C^{k+\alpha,Dini}(\widetilde{Q}_{1}(x_{0},t_{0}))$.
    \begin{mcor}[Classical Schauder estimate]
    	For some $k\in\N,\alpha\in[0,1)$ and $(x_{0},t_{0})\in\R^{n}\times\R$, assume that $f$ is a nonnegative function in $\R^{n}\times\R$ with $f\in C^{k+\alpha}(\widetilde{Q}_{1}(x_{0},t_{0})),$ and that $u$ is a nonnegative solution of \eqref{1.0}. Then there exists a positive constant $C=C(n,k,\alpha,s)$ such that
        \begin{enumerate}
            \item[(a)] for $2s+\alpha\notin \Z,u\in C^{k+\alpha+2s}(\widetilde{Q}_{1/2}(x_{0},t_{0})),$
    	and
    	\begin{align}
    		\|u\|_{C^{k+\alpha+2s}(\widetilde{Q}_{\frac{1}{2}}(x_{0},t_{0}))}\leq C(\|f\|_{C^{k+\alpha}(\widetilde{Q}_{1}(x_{0},t_{0}))}+\|u\|_{L^{\infty}(\widetilde{Q}_{1}(x_{0},t_{0}))});
    	\end{align}
    	\item[(b)] for $2s+\alpha\in \Z$, 
        
        when $k+2s+\alpha$ is odd, $u\in C^{k+\alpha+2s,x-\ln}(\widetilde{Q}_{1/2}(x_{0},t_{0})),$
    	and
    	\begin{align}
    		\|u\|_{C^{k+\alpha+2s,x-\ln}(\widetilde{Q}_{\frac{1}{2}}(x_{0},t_{0}))}\leq C(\|f\|_{C^{k+\alpha}(\widetilde{Q}_{1}(x_{0},t_{0}))}+\|u\|_{L^{\infty}(\widetilde{Q}_{1}(x_{0},t_{0}))});
    	\end{align}
        when $k+2s+\alpha$ is even, $u\in C^{k+\alpha+2s,\ln}(\widetilde{Q}_{1/2}(x_{0},t_{0})),$
    	and
    	\begin{align}
    		\|u\|_{C^{k+\alpha+2s,\ln}(\widetilde{Q}_{\frac{1}{2}}(x_{0},t_{0}))}\leq C(\|f\|_{C^{k+\alpha}(\widetilde{Q}_{1}(x_{0},t_{0}))}+\|u\|_{L^{\infty}(\widetilde{Q}_{1}(x_{0},t_{0}))}).
    	\end{align}
    	\item[~]Moreover, under the stronger condition $f\in C^{k+\alpha,Dini}(\widetilde{Q}_{1}(x_{0},t_{0}))$, there  holds $u\in C^{k+\alpha+2s}(\widetilde{Q}_{1/2}(x_{0},t_{0})),$
    	and
    	\begin{align}
    		\|u\|_{C^{k+\alpha+2s}(\widetilde{Q}_{\frac{1}{2}}(x_{0},t_{0}))}\leq C(\|f\|_{C^{k+\alpha,Dini}(\widetilde{Q}_{1}(x_{0},t_{0}))}+\|u\|_{L^{\infty}(\widetilde{Q}_{1}(x_{0},t_{0}))}).
    	\end{align}
        \end{enumerate}
    \end{mcor}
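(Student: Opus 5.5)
The plan is to deduce the classical estimate from the pointwise results by a covering argument, as Remark \ref{remark 2} suggests. Fix $(x,t)\in\widetilde{Q}_{1/2}(x_0,t_0)$ and choose a small radius $r_0=r_0(n)$, say $r_0\le \frac{\sqrt n}{2(1+\sqrt n)}$. This choice guarantees that $\widetilde{Q}_{\frac{1+\sqrt n}{2\sqrt n}r_0}(x,t)\subset\widetilde{Q}_1(x_0,t_0)$ and that $Q_{r_0}(x,t)\subset\widetilde{Q}_1(x_0,t_0)$.

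First, the hypothesis $f\in C^{k+\alpha}(\widetilde{Q}_1(x_0,t_0))$ gives, by Remark \ref{remark 2} and the appendix equivalence, $f\in C^{k+\alpha}_{\infty,\infty}(x,t;r_0)$. In the Dini case it gives the analogous Dini pointwise membership. In both cases the norm is bounded by $C\|f\|_{C^{k+\alpha}(\widetilde{Q}_1)}$, uniformly in $(x,t)$. Here we take the Taylor polynomial of $f$ at $(x,t)$ as $P$, and $\nu_f(r)\lesssim r^{k+\alpha}$.

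Second, we apply Corollary \ref{cor1} at $(x,t)$ with $s_1=s_2=\infty$:
\[
\|u\|_{C^{k+\alpha+2s}_{\infty,\infty}(x,t;\frac{r_0}{2})}\le C\bigl(\|f\|+r_0^{-(k+\alpha+2s)}\|u\|_{L^\infty(\widetilde{Q}_1(x_0,t_0))}\bigr).
\]
In the non-Dini cases we use the analogous rescaled versions of Theorem \ref{th6}(a),(b), which the paper states hold similarly. These give the $\ln$ and $x$-$\ln$ variants. Since $r_0$ depends only on $n$, the factor $r_0^{-(k+\alpha+2s)}$ is absorbed into $C$.

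Third, we take the supremum over $(x,t)\in\widetilde{Q}_{1/2}(x_0,t_0)$. The pointwise norms are then uniformly bounded with a common radius $r_0/2$, which is exactly the definition of $C^{k+\alpha+2s}_{\infty,\infty}(\overline{\widetilde{Q}_{1/2}})$ (respectively the $\ln$ and $x$-$\ln$ spaces). Remark \ref{remark 2} then identifies this space with the classical $C^{k+\alpha+2s}(\widetilde{Q}_{1/2})$, with equivalent norms.

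The main obstacle is that last identification. We need uniform pointwise polynomial approximations at every point to yield genuine H\"older bounds on $D^{k'}u$ ($k'=\lfloor k+\alpha+2s\rfloor$) between two points. The standard route is as follows. For nearby points $(x,t),(y,\tau)$ at parabolic distance $\rho$, compare their approximating polynomials $P_{x,t}$ and $P_{y,\tau}$ on a cylinder of size $\rho$ contained in both domains. The triangle inequality bounds $\|P_{x,t}-P_{y,\tau}\|_{L^\infty(Q_\rho)}$ by $C\rho^{k+\alpha+2s}$, possibly with a log factor. Equivalence of norms on the finite-dimensional space $\mathcal{P}_{k'}$, rescaled, then bounds the coefficient differences by $C\rho^{k+\alpha+2s-|\sigma|}$. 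Since the coefficients of $P_{x,t}$ are the derivatives $D^\sigma u(x,t)$, this gives the H\"older seminorm, and the log spaces are treated in the same way.

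A side issue is that the pointwise cylinders are backward ($Q_r$), whereas the target is $\widetilde{Q}$. For this we compare at the later of the two times, so both points lie in its backward cylinder. Taking $r_0$ small handles the containment.
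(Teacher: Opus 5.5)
Your proposal is correct and matches the paper's argument. The paper gets this corollary in one line: it applies the rescaled pointwise result (Corollary \ref{cor1} and its non-Dini analogues) at every point of $\widetilde{Q}_{1/2}(x_0,t_0)$, then invokes the local/pointwise equivalence of Remark \ref{remark 2}. Your choice of $r_0=r_0(n)$ keeping the cylinders inside $\widetilde{Q}_1(x_0,t_0)$, and your sketch of the Campanato-type identification (comparing at the later time), just make explicit the details the paper leaves implicit.
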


\subsection{Main Ideas of the Proof}
In this subsection, we sketch the proof of Theorem \ref{th6}.
Specifically, we work on the integral equation \eqref{inteq} and divide the integral into two parts. Without loss of generality, we may assume $c=0$ .  
For any measurable set $A\subset \R^{n+1}$, we define  $$f_{A}(x,t)=f(x,t)\chi_{A}(x,t),$$ where $\chi_{A}$ denotes the characteristic function of $A$. 

    For each $(x_0,t_0) \in \mathbb{R}^n \times \mathbb{R}$ and $0<r\leq 1,$ we decompose  a solution $u(x,t)$ of (\ref{inteq}) into two parts with respect to  $\widetilde{Q}_r(x_{0},t_{0})$ (See Figure \ref{figure 1}): the external function
    	\begin{align}\label{1.24}
    		v_{r}(x,t)=\int_{-\infty}^{t}\int_{\R^{n}}f_{\widetilde{Q}_{r}^{c}(x_{0},t_{0})}(y,\tau)K_{-s}(x-y,t-\tau)dyd\tau,
    	\end{align}
and the internal function
    	\begin{align}\label{1.25-1}
    		w_{r}(x,t)=\int_{-\infty}^{t}\int_{\R^{n}}f_{\widetilde{Q}_{r}(x_{0},t_{0})}(y,\tau)K_{-s}(x-y,t-\tau)dyd\tau.
    	\end{align}
We consider the case where $r=1$. Theorem \ref{th6} will easily follow from Proposition \ref{v-E} and Proposition \ref{w(x,t)}, which consider the pointwise regularity of $v_{1}$ and $w_{1}$, respectively. 
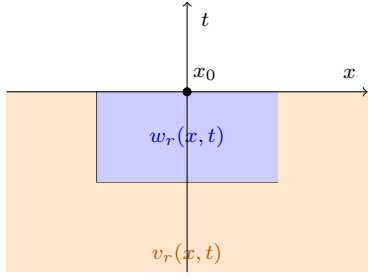
\begin{figure}[h]
\begin{center}
\begin{tikzpicture}[scale=1.2]
	\fill[white] (-2,-2) rectangle (2,1);
	\begin{scope}
		\clip (-2,-2) rectangle (2,1); 
		\fill[orange!20, draw=none] (-2,-2) rectangle (2,0);
		\fill[white] (0,0) circle (1);	
\end{scope}
	\draw [black] (-1,-1) rectangle (1,0);
	\fill[blue!20] (-1,-1) rectangle (1,0);
	\node[orange!70!black] at (0,-1.8) {\scriptsize $v_{r}(x,t)$}; 
	\node[blue!70!black] at (0,-0.5) {\scriptsize $w_{r}(x,t)$}; 
	\draw[black,thin,->] (-2,0)--(2,0);
    \draw[black,thin,->] (0,-2)--(0,1);
	\fill (0,0) circle(0.05);
	\node[black] at (1.8,0.2) {\scriptsize $x$};
    \node[black] at (0.2,0.8) {\scriptsize $t$};
	\node[black] at (0.2,0.2) {\scriptsize $x_0$};      
\end{tikzpicture}
\caption{Partition for the solution}
\label{figure 1}
\end{center}
\end{figure}
   
   We first derive the pointwise $C^{k}$ estimate of $v_{1}$.
    \begin{mpro}\label{v-E}
    	For some $x_{0},t_{0}\in\R^{n}\times\R$, let $u$ be a nonnegative solution of equation \eqref{1.0}
    	with $f\geq 0$ and let $v_{1}$ be defined by \eqref{1.24} for $r=1$. 
        Then $v_{1}\in C^{\infty}(x_{0},t_{0};1/2)$, and for any $k\in\mathbb{N}$, there exists a positive constant $C=C(n,k)$ such that
    		\begin{align}
    			\|v_{1}\|_{C^{k}(x_{0},t_{0};\frac{1}{2})}\leq C\|u\|_{L^{\infty}(\widetilde{Q}_{\frac{1+\sqrt{n}}{2\sqrt{n}}}(x_{0},t_{0}))}.	
    		\end{align}
    \end{mpro}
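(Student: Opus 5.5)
The plan is to prove a kernel-perturbation inequality and then sum over a Taylor expansion. For $(x,t)\in Q_{1/2}(x_0,t_0)$ and $(y,\tau)\notin\widetilde{Q}_1(x_0,t_0)$ with $\tau<t$, the point $(x-y,t-\tau)$ stays away from the singularity of $K_{-s}$. Indeed, either $|x-y|\ge 1/2$, or $t-\tau\ge t-t_0+1\ge 3/4$. So $v_1$ may be differentiated under the integral sign in $Q_{1/2}(x_0,t_0)$ (and, by the same argument, in a slightly larger neighbourhood). Direct computation gives
$$|D_x^{a}\partial_t^{b}K_{-s}(z,\rho)|\le C\,K_{-s}(z,\rho)\Big(1+\frac{|z|^2}{\rho}\Big)^{m}\big(1+\rho^{-m}\big)$$
with $m=m(a,b)$. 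The key claim is the following. There are finitely many points $(x_j',t_j')\in\widetilde{Q}_{\frac{1+\sqrt n}{2\sqrt n}}(x_0,t_0)$, $j=1,\dots,N(n)$, and regions $A_j$ covering $\widetilde{Q}_1^c(x_0,t_0)$, such that for $(y,\tau)\in A_j$
$$|D^\sigma_{(x,t)}K_{-s}(x-y,t-\tau)|\le C(n,k)\,K_{-s}(x_j'-y,t_j'-\tau)\qquad \text{for all }(x,t)\in Q_{1/2}(x_0,t_0),\ |\sigma|\le k+1.$$

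Given this claim, the rest is short. Since $f\ge0$, $w_1\ge0$ and the constant $c\ge 0$ (we take $c=0$ as in the paper), we have $v_1\le u$. Hence
$$|D^\sigma v_1(x,t)|\le C\sum_j v_1(x_j',t_j')\le C\,\|u\|_{L^\infty(\widetilde{Q}_{\frac{1+\sqrt n}{2\sqrt n}}(x_0,t_0))}.$$
Taking $P$ to be the parabolic Taylor polynomial of $v_1$ of degree $k$ at $(x_0,t_0)$, Taylor's formula with the bound on derivatives of parabolic order $k+1,k+2$ gives $|v_1-P|\le C\|u\|_{L^\infty}\,r^{k+1}$ on $Q_r$. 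This yields $\nu_{v_1}(r)\le C\|u\|r^{k+1}$, and the geometric sum in \eqref{1.1000} is bounded by $Km$, even by a constant. Since $k$ is arbitrary, $v_1\in C^\infty(x_0,t_0;1/2)$.

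The main obstacle is the kernel comparison, which I would prove in two regimes. In the first regime, $t-\tau$ is bounded below (for instance $\tau\le t_0-1$, or more generally $t-\tau\ge c$). Here the factor $1+\rho^{-m}$ is harmless. The polynomial factor $(|x-y|^2/\rho)^m$ is absorbed by moving to a later time $t'=t_0+\delta$, with $\delta\le (\frac{1+\sqrt n}{2\sqrt n})^2-\frac14$ and $x'=x_0$. Then $t'-\tau\ge (t-\tau)+\delta'$, and the exponent gains
$$\frac{|x-y|^2}{4(t-\tau)}-\frac{|x_0-y|^2}{4(t'-\tau)}\ \gtrsim\ \frac{|x-y|^2}{\rho}\cdot\frac{\delta'}{\rho+\delta'}-C.$$
This gain absorbs powers of $|x-y|^2/\rho$ whenever $\rho$ is bounded; for $\rho$ large the polynomial factors are bounded anyway. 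The comparison of the powers $(t-\tau)^{-n/2-1+s}$ with $(t'-\tau)^{-n/2-1+s}$ is trivial.

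In the second regime, $t-\tau$ is small and $|y-x_0|\ge1$, so $|x-y|\ge1/2$. Here a time shift alone is insufficient, since $\rho^{-m}$ blows up. Instead I would shift in space toward $y$. Split $\{|y-x_0|\ge1\}$ into the $2n$ regions where $\pm(y-x_0)_i\ge |y-x_0|/\sqrt n$, and in the region indexed by $(i,\pm)$ take $x'=x_0\pm\frac{1}{2\sqrt n}e_i$, together with a slight forward time shift. This is where the radius $\frac{1+\sqrt n}{2\sqrt n}=\frac12+\frac1{2\sqrt n}$ comes from. Then $|x-y|^2-|x'-y|^2\ge c_n>0$ uniformly, because the shift reduces the component of $y-x$ in a direction carrying a $1/\sqrt n$ fraction of its length. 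The exponential gain $e^{c_n/(4\rho)}$ dominates $\rho^{-m}(1+|x-y|^2/\rho)^m$ once we also use $e^{-|x-y|^2/(8\rho)}$. Checking that the constants in these two regimes glue uniformly, especially for large $|y|$ with moderate $\rho$, is the delicate step. There I would combine the spatial and temporal shifts, and average over the finitely many directions if needed.
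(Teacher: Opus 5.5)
Your overall architecture is the paper's: dominate $D^\sigma K_{-s}$ by shifted copies of $K_{-s}$, integrate against $f\ge0$, use $v_1\le u$, then Taylor-expand. The gap is in the mechanism you give for the key kernel comparison in the second regime. With $x'=x_0\pm\frac{1}{2\sqrt n}e_i$ fixed, $|x-y|^2-|x'-y|^2$ is \emph{not} bounded below by some $c_n>0$ uniformly in $x\in B_{1/2}(x_0)$. Take $x_0=0$, $y=Re_1$ (which lies in your $(1,+)$ sector), and $x=\frac{\theta}{2}e_1$ with $\frac1{\sqrt n}<\theta<1$. Then
\[
|x-y|^2-|x'-y|^2=\Big(\tfrac{1}{2\sqrt n}-\tfrac{\theta}{2}\Big)\Big(2R-\tfrac{\theta}{2}-\tfrac{1}{2\sqrt n}\Big)\longrightarrow-\infty\quad(R\to\infty),
\]
and for $n=1$ this quantity tends to $0$ as $\theta\to1$. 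The argument ``the shift reduces the component of $y-x$'' is valid only if the shift is made from $x$ and the sector is chosen by the direction of $y-x$. In that case $|x-y|^2-|x+\eta-y|^2=2\eta\cdot(y-x)-|\eta|^2\ge\frac1{4n}$. This is exactly the paper's construction: Lemma~\ref{translation} applied with radius $\frac12$, summed over all $2^n$ diagonal shifts so that no sector has to be selected. The comparison points are $(x+\eta_j,t)$ and $(x,t+\frac1{4n})$, and they depend on $(x,t)$. That dependence is what produces the radius $\frac12+\frac1{2\sqrt n}$; fixed points at distance $\frac1{2\sqrt n}$ from $x_0$ would not. The dependence is also harmless for your conclusion, since all these points lie in $\widetilde Q_{\frac{1+\sqrt n}{2\sqrt n}}(x_0,t_0)$.

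Alternatively, you can keep fixed points, because your claim that a time shift alone is insufficient is wrong. Suppose $|z|\ge\frac12$, $|z'-z|\le1$ and $\rho\le\delta'/18$. Then
\[
\frac{|z|^2}{4\rho}-\frac{|z'|^2}{4(\rho+\delta')}\ge\frac{|z|^2}{4\rho}-\frac{9|z|^2}{4\delta'}\ge\frac{|z|^2}{8\rho},
\]
which absorbs $\rho^{-m}$, the prefactor ratio, and every power of $|z|^2/\rho$. In fact the single point $(x_0,t_0+\delta)$, for any $0<\delta\le\frac14$, works in all regimes. It also gives a majorant $C f_{\widetilde Q_1^c}(y,\tau)K_{-s}(x_0-y,t_0+\delta-\tau)$ that is independent of $(x,t)$. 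A majorant of this kind is what actually justifies differentiating under the integral sign; staying away from the singularity is not enough by itself, and the paper needs the argument of Lemma~\ref{time-lemma} for this step.

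One further correction, in the large-$\rho$ range. Your bound $(1+|z|^2/\rho)^m(1+\rho^{-m})$ is too crude there. When $|z|\sim\rho\to\infty$ it grows like $\rho^m$, while the time-shift gain $\frac{\delta'|z|^2}{4\rho(\rho+\delta')}$ stays bounded. You need the finer structure from Lemma~\ref{prop2.3}: every term has the form $|z|^p\rho^{-q}$ with $q\ge p$. Hence each term is at most $(|z|/\rho)^p$ for $\rho\ge1$, and this is absorbed by the gain $\gtrsim\delta'|z|^2/\rho^2-C$. This is the role of the hypothesis $m\le l$ in Lemma~\ref{translation}. With these repairs, the rest of your argument goes through: $v_1\le u$, and parabolic Taylor using derivatives of parabolic order $k+1$ and $k+2$.
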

We will apply the perturbation technique to the kernel function to prove Proposition \ref{v-E}. For clarity of exposition, we consider a simple case with $n=2$ and $(x_{0},t_{0})=(0,0)$. 

Estimating the second derivative $v_{1,x_{1}x_{1}}(x,t)$ in terms of $v_{1}(x,t)$ itself presents a substantial difficulty. Indeed, by definition \eqref{1.24} with $r=1$, that is, 
\begin{align}\label{1.27}
    		v_{1}(x,t)=\int_{-\infty}^{t}\int_{\R^{n}}f_{\widetilde{Q}_{1}^{c}(0,0)}(y,\tau)K_{-s}(x-y,t-\tau)dyd\tau,
\end{align}
it follows that
\begin{align}\label{1.28}
    v_{1,x_1x_1}(x,t)=\int_{-\infty}^{t}\int_{\R^{n}}f_{\widetilde{Q}_{1}^{c}(0,0)}(y,\tau)\left[\frac{1}{2(t-\tau)} +\frac{(x_1-y_{1})^{2}}{4(t-\tau)^2}\right]K_{-s}(x-y,t-\tau)dyd\tau.
\end{align}
Comparing \eqref{1.27} and \eqref{1.28}, the desired estimate easily follows from the inequality
\begin{align}\label{1.29}
    \left[\frac{1}{2\tau} +\frac{y_{1}^{2}}{4\tau^2}\right]K_{-s}(y,\tau)\leq CK_{-s}(y,\tau)~~~\text{ for any } (y,\tau)\in \R^{2}\times\R^{+}.
\end{align}
However, \eqref{1.29} is impossible because the coefficient $\left[\frac{1}{2\tau} +\frac{y_{1}^{2}}{4\tau^2}\right]$ is unbounded over $\R^{2}\times\R^{+}$. To resolve this issue, we introduce a new idea motivated by \cite{WGC}. Rather than directly controlling the derivatives of 
$v$ by its value at the same point, we use five nearby points $(x^{i},t), i=1,2,3,4$ and $(x,t_{-})$ to achieve the bound
\begin{align}\label{md}
    |v_{1,x_{1}x_{1}}(x,t)|\leq \sum_{i=1}^{4}v_{1}(x^{i},t)+v_{1}(x,t_{-}),~~\text{for any }(x,t)\in \widetilde{Q}_{\frac 12}(0,0).
\end{align}
Set
$$-\eta_{1}=(\frac{1}{4},\frac{1}{4}), -\eta_{2}=(-\frac{1}{4},\frac{1}{4}), -\eta_{3}=(-\frac{1}{4},-\frac{1}{4}), -\eta_{4}=(\frac{1}{4},-\frac{1}{4}).$$
For $y\in B_{1/2}^{c}(0)\cap I_{j},$ with $I_{j}~(j=1,2,3,4)$ representing the $j$-th quadrants of $\R^{2}$, it can be verified that
\begin{eqnarray}
    		\begin{aligned} 
    			|y|^2
    			&\geq|y+\eta_j|^2+\frac{1}{8}|y|+\frac{1}{16}.
    		\end{aligned}
    	\end{eqnarray}
Consequently, for any $m,l\in \R$,
\begin{eqnarray}
    	\begin{aligned}
    		\frac{|y|^{2}}{\tau^{2}}K_{-s}(y,\tau)\leq\:C\sum_{j=1}^{4}K_{-s}(y+\eta_{j},\tau),(y,\tau)\in B_{\frac{1}{2}}^c(0)\times \mathbb{R}^+.
    	\end{aligned}
 \end{eqnarray}
 
A similar perturbation can be applied to the time variable $\tau$. Hence, 
taking $$x^{j}=x+\eta_{j}~ \text{ and }t_{-}=t+1/8,$$leads to the desired estimate \eqref{md}.

 Using this perturbative approach, we establish the following fundamental estimates for the kernel $K_{-s}$.
    \begin{mlem}[Kernel estimate]\label{translation}
    	Let $r>0$ and $l,m\in \R$, satisfying $0\leq m\leq l$. There exist $2^n$ points $\eta_{j}\in \partial B_{\frac{r}{\sqrt{n}}}$ and a positive constant $C=C(n,m,l)$ such that for any $(x,t)\in \widetilde{Q}_{r}^{c}\backslash\{t=0\}$,
    	\begin{align}\label{3.2-4}
    		\frac{|x|^{m}}{|t|^{l}}K_{-s}(x,|t|)\leq Cr^{m-2l}\left(\sum_{j=1}^{2^n}K_{-s}(x+\eta_{j},|t|)+K_{-s}(x,|t|+r^{2}/n)\right).
    	\end{align}
    Furthermore, for each $k\in \N^{*}$, it holds
        \begin{align}\label{2.2-100}
            |D^{k}K_{-s}(x,|t|)|\leq Cr^{-k}\left(\sum_{j=1}^{2^n}K_{-s}(x+\eta_{j},|t|)+K_{-s}(x,|t|+r^{2}/n)\right).
        \end{align}
    \end{mlem}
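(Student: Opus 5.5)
We choose the $2^n$ points $\eta_j=\frac{r}{\sqrt n}\,\epsilon_j$, where $\epsilon_j$ runs over all sign vectors in $\{\pm1\}^n$ divided by $\sqrt n$. Then $|\eta_j|=\frac{r}{\sqrt n}$, and every coordinate of $\eta_j$ has modulus $r/n$. By scaling ($x\mapsto rx$, $t\mapsto r^2t$, under which $K_{-s}(rx,r^2t)=r^{-n-2+2s}K_{-s}(x,t)$ and $\frac{|x|^m}{|t|^l}$ scales by $r^{m-2l}$) it suffices to treat $r=1$. Since only $|t|$ appears, write $\tau=|t|>0$.

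The complement $\widetilde Q_1^c$ is covered by two regimes. In the first, $|x|\ge1$ and $\tau$ is arbitrary. Fix the orthant $I_j$ containing $x$ and choose $\eta_j$ with signs opposite to those of $x$, so that $x\cdot\eta_j=-\frac1n\sum_i|x_i|\le -\frac{1}{n}|x|$. Then
\begin{align*}
|x+\eta_j|^2=|x|^2+2x\cdot\eta_j+\tfrac1n\le |x|^2-\tfrac{2}{n}|x|+\tfrac1n\le |x|^2-\tfrac1n|x|,
\end{align*}
using $|x|\ge1$. This is the $n$-dimensional analogue of the planar inequality displayed before the lemma. It gives
\begin{align*}
K_{-s}(x,\tau)\le e^{-\frac{|x|}{4n\tau}}K_{-s}(x+\eta_j,\tau).
\end{align*}
Hence
\begin{align*}
\frac{|x|^m}{\tau^l}K_{-s}(x,\tau)\le \frac{|x|^m}{\tau^l}e^{-\frac{|x|}{4n\tau}}K_{-s}(x+\eta_j,\tau).
\end{align*}
When $\tau\le |x|$, set $\rho=|x|/\tau\ge 1$. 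Then $\frac{|x|^m}{\tau^l}\le \rho^{l}|x|^{m-l}\le\rho^l$, because $m\le l$ and $|x|\ge1$, and $\rho^le^{-\rho/4n}\le C(n,l)$. When $\tau\ge |x|\ge1$, we instead compare with the time-shifted kernel. The exponential factor satisfies $e^{-|x|^2/4\tau}\le e^{-|x|^2/4(\tau+1/n)}$, and the power ratio $\big((\tau+1/n)/\tau\big)^{n/2+1-s}$ is bounded because $\tau\ge1$. Therefore
\begin{align*}
K_{-s}(x,\tau)\le C\,K_{-s}(x,\tau+1/n),
\end{align*}
and $|x|^m\tau^{-l}\le \tau^{m-l}\le1$.

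In the second regime, $|x|<1$ and $\tau\ge1$. The same time-shift comparison applies, and $|x|^m\tau^{-l}\le1$. Since every point of $\widetilde Q_1^c$ with $t\ne0$ falls in one of these two regimes, this proves \eqref{3.2-4}.

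For \eqref{2.2-100}, we compute the parabolic derivatives of $K_{-s}(x,\tau)=C\tau^{-a}e^{-|x|^2/4\tau}$ with $a=\frac n2+1-s$. By induction, $D^\sigma K_{-s}$ is a finite sum of terms $c\,\frac{x^\beta}{\tau^{p}}K_{-s}$ with $|\beta|\le p$ and $2p-|\beta|=|\sigma|$. Indeed, each $x_i$-derivative produces either $x_i/\tau$ or lowers a monomial, and a $\tau$-derivative produces $1/\tau$ or $|x|^2/\tau^2$. Each term satisfies $|x|^{|\beta|}\tau^{-p}$ with $m=|\beta|\le l=p$ and $m-2l=-|\sigma|$, so \eqref{3.2-4} bounds it by $r^{-k}$ times the right-hand side when $|\sigma|=k$. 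Summing over finitely many terms gives \eqref{2.2-100}.

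The main obstacle is the case split in the first regime, namely showing that $\rho^l e^{-c\rho}$ absorbs the polynomial weight and that the case $\tau\gtrsim|x|$ is handled by the time shift. One must also check that in the second regime the region $\tau\ge1$ is exactly what lying outside $\widetilde Q_1$ with $|x|<1$ forces. The lemma should be read with $K_{-s}$ evaluated at $|t|$ and with $\widetilde Q_r$ symmetric in time, which is why $|t|\ge r^2$ there.
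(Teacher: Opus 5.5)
Your proof is correct and is essentially the paper's argument. You use the same orthant-opposite shifts $\eta_j$ with coordinates $\pm r/n$, the same time shift by $r^2/n$ when $|x|<r$, and the monomial structure of $D^\sigma K_{-s}$ (which the paper cites from Lemma \ref{prop2.3}) to pass from \eqref{3.2-4} to \eqref{2.2-100}. The one variation is that the paper keeps the extra $\frac{r^2}{2n}$ in $|x|^2\geq|x+\eta_j|^2+\frac{r}{2n}|x|+\frac{r^2}{2n}$, so the spatial shift alone covers all $t$ when $|x|\ge r$; you drop that term, send the subcase $\tau\ge|x|$ (at $r=1$) to the time-shifted kernel, and use the trivial bound $|x|^m\le r^m$ for $|x|<r$, which is equally valid and slightly simpler.
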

\par In the next step, we show the pointwise regularity of $w_{1}$ given in \eqref{1.25-1}.
    \begin{mpro}\label{w(x,t)}   
    	For some $(x_{0},t_{0})\in\R^{n}\times\R$, assume that $f(x,t)\geq 0$ with $f\in C_{1}^{k+\alpha}(0,0;1)$ and $w_{1}$ is defined by \eqref{1.25-1}, then there exists a positive constant $C=C(n,k,\alpha,s)$ such that
    	\begin{enumerate}
    		\item[(a)] for $2s+\alpha\notin\Z$,
    		\begin{align}\label{w-6}
    			\|w_{1}\|_{C_{1}^{k+\alpha+2s}(0,0;\frac{1}{2})}\leq C\|f\|_{C_{1}^{k+\alpha}(0,0;1)};	
    		\end{align}
    		\item[(b)] for $2s+\alpha\in\Z$,\\
            when $k+\alpha+2s$ is odd,
    		\begin{align}
    			\|w_{1}\|_{C_{1}^{k+\alpha+2s,x-\ln}(0,0;\frac{1}{2})}\leq C\|f\|_{C_{1}^{k+\alpha}(0,0;1)},	
    		\end{align}
    		when $k+\alpha+2s$ is even,
    		\begin{align}
    			\|w_{1}\|_{C_{1}^{k+\alpha+2s,\ln}(0,0;\frac{1}{2})}\leq C\|f\|_{C_{1}^{k+\alpha}(0,0;1)}.	
    		\end{align}
    		\item[~] Furthermore, under the stronger condition $f\in C_{1}^{k+\alpha,Dini}(0,0;1)$, there holds
    		\begin{align}
    			\|w_{1}\|_{C_{1}^{k+\alpha+2s}(0,0;\frac{1}{2})}\leq C\|f\|_{C_{1}^{k+\alpha,Dini}(0,0;1)}.
    		\end{align}
        \end{enumerate}
    \end{mpro}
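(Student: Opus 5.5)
We will prove Proposition \ref{w(x,t)} at $(x_0,t_0)=(0,0)$ by approximating $w_1$ with parabolic polynomials at the dyadic scales $r^i$, $r\in[1/4,1/2]$.

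\textbf{Step 1 (decomposition).} Let $P\in\mathcal{P}_k$ be the polynomial attached to $f$ in Definition \ref{sec:space-a}. For each scale $\rho=r^i$, split the internal function $w_1$ as in Definition \ref{DEF}:
\begin{itemize}
\item[] $u_P$ is the contribution of $P\chi_{\widetilde{Q}_1}$;
\item[] $S_\rho$ is the contribution of $(f-P)\chi_{\widetilde{Q}_1\setminus\widetilde{Q}_\rho}$;
\item[] $T_\rho$ is the contribution of $(f-P)\chi_{\widetilde{Q}_\rho}$.
\end{itemize}
By the definition of $C^{k+\alpha}_1$, only the averaged $L^1$ oscillation $\nu_f$ of $f-P$ on cylinders is controlled. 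We will therefore estimate everything in the $L^1$-average norms $\nu_{w_1}(\rho)$ and verify the summed condition \eqref{1.1000} for $w_1$ with exponent $k+\alpha+2s$.

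\textbf{Step 2 (the term $T_\rho$).} We do not expand $T_\rho$. By Fubini and the scaling of $K_{-s}$, its averaged $L^1$ norm over $Q_\rho$ is bounded by $\int_{\widetilde{Q}_\rho}|f-P|$ times
\[
\sup_y \bbint_{Q_\rho}\int K_{-s}\lesssim \rho^{2s}.
\]
Here we use that $K_{-s}$ is locally integrable with homogeneity $-(n+2-2s)$. This gives a contribution of order $\rho^{2s}\nu_f(\rho)$.

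\textbf{Step 3 (the terms $S_\rho$ and $u_P$).} For $(x,t)\in Q_{\rho/2}$ and $(y,\tau)\notin\widetilde{Q}_\rho$, $S_\rho$ is smooth. We Taylor expand it to parabolic order $k+2$, which is at least the order needed to exceed $k+\alpha+2s$, obtaining a polynomial $\Pi_\rho$. The derivatives are bounded using Lemma \ref{translation}, or simply the homogeneity bound
\[
|D^jK_{-s}(z,\theta)|\lesssim (|z|+\sqrt\theta)^{-n-2+2s-j}.
\]
Decomposing $\widetilde{Q}_1\setminus\widetilde{Q}_\rho$ into dyadic annuli $\widetilde{Q}_{r^{l}}\setminus\widetilde{Q}_{r^{l+1}}$ with $l<i$ bounds the remainder by
\[
\rho^{j}\sum_{l<i} r^{l(2s-j)}\nu_f(r^l), \qquad j>k+\alpha+2s.
\]
The coefficients of $\Pi_\rho$ differ between consecutive scales by $r^{l(2s-|\sigma|)}\nu_f(r^l)$ terms.

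The polynomial part $u_P$ is explicit: $K_{-s}*(\text{monomial}\cdot\chi_{\widetilde{Q}_1})$. Near $0$ it equals a polynomial plus homogeneous terms computed by scaling. When $2s+\alpha+|\sigma|$ hits an integer, this produces $\rho^{k+\alpha+2s}\ln\rho$ terms. The directional average of the kernel and the parity of $k+\alpha+2s$ decide whether the log appears only through $|x|$, giving the $x$-$\ln$ space, or in full, giving the $\ln$ space. This is computed once.

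\textbf{Step 4 (summation).} Sum the scale-$i$ errors multiplied by $(r_0r)^{-i(k+\alpha+2s)}$ over $i<m$. The double sums $\sum_i\sum_{l<i}r^{(i-l)(j-k-\alpha-2s)}(\cdot)$ converge geometrically in the Taylor-remainder part, giving a bound $Km$ (respectively $K$ in the Dini case). In the borderline integer case, the coefficient telescoping has a zero exponent, producing the extra factor $m$ and hence $Km^2$, which is the ln class.

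The main obstacle is Step 3. We must check that the polynomials $\Pi_\rho$ converge to a single limit polynomial with the correct coefficient bounds, and pin down the exact logarithmic structure of $u_P$ in the integer case, including the odd/even parity distinction. We would attack this with the equivalent characterizations of the pointwise spaces from the appendix, which reduce it to estimating coefficient differences between consecutive scales.
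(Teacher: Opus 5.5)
Your architecture matches the paper's: your $T_\rho$ and $S_\rho$ play the roles of the paper's $S_r$ and $T_r$. Your Fubini bound for the inner piece, $\rho^{-n-2}\int_{Q_\rho}|T_\rho|\lesssim \rho^{2s}\,\rho^{-n-2}\int_{Q_\rho}|f-P|$, is a cleaner substitute for Lemma \ref{ref-w()}. Your displayed normalization is off by a factor $\rho^{n+2}$, but the conclusion $\rho^{2s}\nu_f(\rho)$ is right.

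The genuine error is in Step 3: $u_P$ has no logarithmic or homogeneous terms near the origin. Write $P\chi_{\widetilde{Q}_1}=P\psi-P\psi\chi_{\widetilde{Q}_1^c}$.
\begin{itemize}
\item The potential of $P\psi$ is $C^\infty$, because every derivative can be moved onto the smooth, compactly supported $P\psi$.
\item The potential of $P\psi\chi_{\widetilde{Q}_1^c}$ is an exterior potential, smooth on $\widetilde{Q}_{1/2}$ by Lemma \ref{3.7}.
\end{itemize}
This is exactly Theorem \ref{3.10}. So what you call the main obstacle (the log structure of $u_P$, with parity decided by a directional average of the kernel) does not exist. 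Your plan is then left with no mechanism for the distinction between the $x$-$\ln$ and $\ln$ cases.

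The logarithm comes only from the annular piece. When $\gamma=k+\alpha+2s\in\Z$, the degree-$\gamma$ coefficients of $\Pi_\rho$ change between consecutive scales by about $r^{-l(k+\alpha)}\nu_f(r^l)$, which does not decay. These coefficients therefore need not converge and cannot be kept in a single limit polynomial, contrary to what you propose to check. As in Lemma \ref{lemma 3.10}, one Taylor expands only to degree $\gamma-1$ and puts all degree-$\gamma$ terms into the remainder. That remainder is bounded by $C\rho^{\gamma}\sum_{l<i}r^{-l(k+\alpha)}\nu_f(r^l)\le C\,i\,\rho^{\gamma}\|f\|$, or by $C\rho^{\gamma}\|f\|$ under the Dini condition. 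This is your ``zero exponent'' from Step 4, and it suffices for the even case.

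The odd case needs one more elementary observation that your plan lacks. If $|\sigma'|+2\sigma_{n+1}=\gamma$ is odd, then $|\sigma'|\geq 1$, so every degree-$\gamma$ monomial $(x,t)^\sigma$ contains a factor $x_j$. Hence the critical remainder is bounded by $C\,i\,\rho^{\gamma-1}|x|$, as in Lemma \ref{lemma 3.10}(1), and this $|x|$ factor is what produces the $x$-$\ln$ estimate. The parity split comes from the monomials, not from the kernel.

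Similarly, in the non-integer case you must drop from $\Pi_\rho$ the terms of degree above $\lfloor k+\alpha+2s\rfloor$. Their coefficients grow roughly like $\rho^{k+\alpha+2s-|\sigma|}$, so they contribute only $O(\rho^{k+\alpha+2s})$ on the cylinder of size $\rho$ and belong in the error, not in a limit polynomial.
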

\par
\par

To estimate $w_{1}$, our aim is to construct a polynomial directly. Since the difference $w_{1}-w_{r}$ is smooth in $\widetilde{Q}_{\frac{r}{2}}(x_{0},t_{0})$, it admits a natural polynomial representation via Taylor expansion. 

To estimate the remainder term rigorously, we introduce an equivalent characterization for pointwise function spaces. Specifically, we define a series that captures the pointwise continuity of a function:
\begin{equation*}
    \sum_{i=0}^{m}r^{-i(k+\alpha)}\nu_{f}\left(r_{0}r^{i}\right),
\end{equation*}
where $\nu_{f}(R)$ are defined as \eqref{nu-f}. 
This series plays a key role in unifying the analysis across various function spaces and parameter regimes. 
For instance, if $2s+\alpha\in \Z$, we take $r_{0}=r=1/2$, 
then for any integer $m\geq 1$,
\begin{align*}
    \sum_{i=1}^{m}2^{(k+\alpha)i}\nu_{f}\left(\frac{1}{2^{i}}\right)\leq K&\Longleftrightarrow f\in C_{1}^{k+\alpha,Dini}(x_{0},t_{0};1/2),\\
    \sum_{i=1}^{m}2^{(k+\alpha)i}\nu_{f}\left(\frac{1}{2^{i}}\right)\leq Km&\Longleftrightarrow f\in C_{1}^{k+\alpha}(x_{0},t_{0};1/2),\\
    \sum_{i=1}^{m}2^{(k+\alpha)i}\nu_{f}\left(\frac{1}{2^{i}}\right)\leq Km^{2}&\Longleftrightarrow f\in C_{1}^{k,\ln}(x_{0},t_{0};1/2).
\end{align*}
Different from the method employed in \cite{LW1},
our framework allows for a unified and simplified treatment of both $f\in C_{1}^{k,\alpha}(x_{0},t_{0};1)$ and $f\in C_{1}^{k,\alpha,Dini}(x_{0},t_{0};1)$. 
In addition, we are able to derive stronger regularity results than those established in \cite{LW1} and \cite{WGC}. 



This paper is organized as follows.
In Section \ref{S2}, we first prove Lemma \ref{translation} and then establish the $C^{\infty}$ regularity of the external part $v_{1}$. In Section \ref{S3}, we obtain the pointwise regularity by decomposing the internal part $w_{1}$ into three components and analyzing them separately.
Section \ref{S4} (Appendix) gives some equivalent characterizations of the pointwise function spaces.

Throughout this paper, for simplicity, we always take $(x_{0},t_{0})=(0,0)$ and $Q_{r}(0,0)=Q_{r}, \widetilde{Q}_{r}(0,0)=\widetilde{Q}_{r}, B_{r}(0)=B_{r}$, where
$r\in(0,1]$. We denote $\lfloor \cdot \rfloor$ as the greatest integer less than or equal to its argument.  
    \par\quad    
    \section{Estimate of the External Part $v_{r}$} \label{S2}
    In this section, we establish the pointwise regularity of the external part \begin{align}\label{est-v}
    		v_{r}(x,t)=\int_{-\infty}^{t}\int_{\R^{n}}f_{\widetilde{Q}_{r}^{c}}(y,\tau)K_{-s}(x-y,t-\tau)dyd\tau.
     \end{align} 
     The subscript $r$ in $v_{r}$ may be omitted when there is no risk of confusion.
     
     We begin by establishing several useful lemmas concerning kernel estimates.
    \begin{lemma}[Global estimate for kernel function]\label{global}
	For any $A,a,b,r\in\R$ with $0\leq a\leq b$ and $ r,A> 0$, there exists a positive constant $C=C(a,b,A)>0$ such that
	\begin{align}
		\frac{|x|^{2a}}{|t|^{b}}e^{-A\frac{|x|^{2}}{|t|}}\leq Cr^{2(a-b)},~~\text{for any }(x,t)\in  \widetilde{Q}_{r}^{c}\backslash\{t=0\}.
	\end{align}
\end{lemma}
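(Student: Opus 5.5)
The plan is to reduce to a one-variable inequality in $\rho=|x|^2/|t|$ and then split according to whether $|x|$ or $|t|$ is large. Since $0\leq a\leq b$, I rewrite the left-hand side as
\[
\frac{|x|^{2a}}{|t|^{b}}e^{-A\frac{|x|^{2}}{|t|}}=\Big(\frac{|x|^{2}}{|t|}\Big)^{a}e^{-A\frac{|x|^{2}}{|t|}}\cdot |t|^{a-b}.
\]
For $\rho\ge 0$, the function $\phi(\rho)=\rho^{a}e^{-A\rho}$ is bounded by a constant $C_0=C_0(a,A)$. Its maximum is $(a/(Ae))^{a}$, with the convention that $\phi\le 1$ when $a=0$. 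I will use this bound together with the sharper consequence that $\rho^{a}e^{-A\rho}\le C\rho^{a-b}$. This holds since $\rho^{b}e^{-A\rho}$ is also bounded, by a constant depending on $b,A$.

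Next I split according to the complement of $\widetilde{Q}_r$. A point $(x,t)\in\widetilde{Q}_r^c$ with $t\neq0$ satisfies either $|t|\geq r^2$ or $|x|\geq r$.

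\emph{Case 1: $|t|\ge r^2$.} Since $a-b\le0$, we have $|t|^{a-b}\le r^{2(a-b)}$. Combined with $\phi\le C_0$, this gives the bound $C_0r^{2(a-b)}$.

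\emph{Case 2: $|x|\ge r$.} Here $|t|$ may be arbitrarily small, so I use the second form instead:
\[
\frac{|x|^{2a}}{|t|^{b}}e^{-A\frac{|x|^{2}}{|t|}}=\Big(\frac{|x|^{2}}{|t|}\Big)^{b}e^{-A\frac{|x|^{2}}{|t|}}\,|x|^{2(a-b)}\le C(b,A)\,|x|^{2(a-b)}\le C(b,A)\,r^{2(a-b)}.
\]
The last step again uses $a-b\le 0$ and $|x|\ge r$.

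Taking $C=\max\{C_0(a,A),C(b,A)\}$ finishes the proof. No step is a serious obstacle. The only point needing care is choosing the exponent ($a$ versus $b$) to pull out in each case, so that the leftover power of $|t|$ or $|x|$ is nonpositive and can be compared with $r$. The degenerate case $a=0$, with the convention $0^0=1$, is handled by the same bounds.
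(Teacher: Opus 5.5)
Your proof is correct and is essentially the paper's argument. The paper also splits into two regimes: for $|t|>r^2$ it pulls out $(|x|^2/|t|)^a$ and uses $|t|^{a-b}\le r^{2(a-b)}$, and for $|t|\le r^2$ (so $|x|\ge r$) it pulls out $(|x|^2/|t|)^b$ and uses $|x|^{2(a-b)}\le r^{2(a-b)}$. Your overlapping split, $|t|\ge r^2$ or $|x|\ge r$, is if anything slightly cleaner: it matches $\widetilde{Q}_r^c$ exactly and avoids the boundary case $|t|=r^2$, $|x|<r$, which the paper's phrasing passes over.
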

\begin{proof}
We consider two cases:

    Case 1: For $|t|\leq r^{2}$ (and thus $x\in B_{r}^{c}$), we obtain
	\begin{eqnarray}
		\begin{aligned}
			\frac{|x|^{2a}}{|t|^{b}}e^{-A\frac{|x|^{2}}{|t|}}&=|x|^{2(a-b)}\left(\frac{|x|^{2}}{|t|}\right)^{b}e^{-A\frac{|x|^{2}}{|t|}}\leq C|x|^{2(a-b)}\leq Cr^{2(a-b)}.
		\end{aligned}
	\end{eqnarray}

    Case 2: For $|t|>r^{2}$, we have
	\begin{eqnarray}
		\begin{aligned}
			\frac{|x|^{2a}}{|t|^{b}}e^{-A\frac{|x|^{2}}{|t|}}&=|t|^{a-b}\left(\frac{|x|^{2}}{|t|}\right)^{a}e^{-A\frac{|x|^{2}}{|t|}}\leq C|t|^{a-b}\leq Cr^{2(a-b)}.
		\end{aligned}
	\end{eqnarray}
    \end{proof}
    \begin{lemma}[Estimates for the derivatives of kernel function]\label{prop2.3}
        For any $k,m\in\N^{*}$, any multi-index $\alpha$ with $|\alpha|=k$ , $(x,t)\in \R^{n}\times \R^{+}$, there exists a positive constant $C=C(k,m)$ such that
        \begin{eqnarray}
    	\begin{aligned}\label{2.3-14}
    		\left|\frac{\partial^{k} K_{-s}(x,t)}{\partial x^{\alpha}}\right|\leq&CK_{-s}(x,t)\sum_{i=0}^{[\frac{k}{2}]}\frac{|x|^{k-2i}}{t^{k-i}},\\
    		\left|\frac{\partial^{m} K_{-s}(x,t)}{\partial t^{m}}\right|\leq&CK_{-s}(x,t)\sum_{i=0}^{m}\frac{|x|^{2i}}{t^{i+m}},\\
    		\left|\frac{\partial^{k+m} K_{-s}(x,t)}{\partial x^{\alpha}\partial t^{m}}\right|
    		\leq&CK_{-s}(x,t)\sum_{i=0}^{m}\sum_{j=0}^{[\frac{k}{2}]}\frac{|x|^{2i+k-2j}}{t^{k+m+i-j}}.
    	\end{aligned}
        \end{eqnarray}
        \end{lemma}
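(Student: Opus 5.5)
The plan is to write $K_{-s}(x,t)=C_{n,s}\,t^{-\beta}e^{-\frac{|x|^2}{4t}}$ with $\beta=\frac n2+1-s$, so that $K_{-s}=C_{n,s}\,t^{-\beta}\,g(x,t)$ with $g=e^{-|x|^2/(4t)}$. I will then prove all three bounds in \eqref{2.3-14} by induction on the number of derivatives, tracking the structure of the prefactor that multiplies $K_{-s}$. The key observation is parabolic homogeneity: a spatial derivative either hits the exponential, producing a factor $-\frac{x_j}{2t}$ (weight $|x|/t$), or hits an existing monomial in $x$, lowering its degree by one. A time derivative either hits $t^{-\beta-p}$, producing a factor of size $1/t$, or hits the exponential, producing $\frac{|x|^2}{4t^2}$.

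First, the pure spatial case. I will show by induction on $k=|\alpha|$ that
\[
\partial_x^\alpha K_{-s}=K_{-s}\sum_{i=0}^{\lfloor k/2\rfloor}\frac{P_{k-2i}(x)}{t^{k-i}},
\]
where each $P_{k-2i}$ is a homogeneous polynomial of degree $k-2i$ whose coefficients depend only on $k$ (indeed this is the Hermite structure). For the inductive step, $\partial_{x_j}$ applied to $K_{-s}\,P_{k-2i}\,t^{-(k-i)}$ gives two terms. One is $-\frac{x_j}{2t}K_{-s}P_{k-2i}t^{-(k-i)}$, which has degree $k+1-2i$ over $t^{k+1-i}$. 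The other is $K_{-s}\,\partial_jP_{k-2i}\,t^{-(k-i)}$, which has degree $(k+1)-2(i+1)$ over $t^{(k+1)-(i+1)}$. Both terms fit the claimed form. Bounding $|P_{k-2i}(x)|\le C|x|^{k-2i}$ then gives the first inequality.

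Second, the time derivatives. I will show by induction on $m$ that
\[
\partial_t^mK_{-s}=K_{-s}\sum_{i=0}^m c_i\frac{|x|^{2i}}{t^{i+m}}.
\]
For the inductive step, differentiating $t^{-(i+m)}$ yields $|x|^{2i}t^{-(i+m+1)}$. The factor $\partial_t K_{-s}=K_{-s}\bigl(-\beta/t+|x|^2/(4t^2)\bigr)$ contributes terms $|x|^{2i}t^{-(i+m+1)}$ and $|x|^{2i+2}t^{-(i+1+m+1)}$. Both are of the claimed form with $m+1$ in place of $m$.

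Finally, for the mixed derivative I will apply $\partial_t^m$ to the spatial formula. Each summand is $K_{-s}P_{k-2j}(x)t^{-(k-j)}$, and since $P_{k-2j}$ does not depend on $t$, the Leibniz rule over $K_{-s}$ and $t^{-(k-j)}$ combines with the time step above. The result is a sum of terms $|x|^{2i+k-2j}t^{-(k+m+i-j)}$ with $0\le i\le m$. The same single-step argument closes the induction, since each $\partial_t$ raises the $t$-power by one and may add $|x|^2/t$.

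There is no real obstacle here. The only care needed is in the bookkeeping of the index ranges, namely checking that the lowest $x$-degree and the $t$-exponents match the stated sums exactly. The constants depend on $\beta$, hence on $n$ and $s$ as well as $k,m$, which I will absorb into $C$.
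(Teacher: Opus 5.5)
Your proposal is correct and follows the paper's approach: the paper computes the first and second derivatives of $K_{-s}$ explicitly and then appeals to ``standard induction'', and your argument carries out that induction in detail, including the index bookkeeping for the spatial, time and mixed cases. Your remark that the constant also depends on $n$ and $s$, through $\beta=\frac{n}{2}+1-s$, is accurate and is slightly more precise than the stated $C=C(k,m)$.
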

    \begin{proof}
    For illustration, we give the following direct computation:
    \begin{eqnarray}\label{L2.3-R1}
    	\begin{aligned}
    		\frac{\partial K_{-s}}{\partial {x_i}}(x,t)&= K_{-s}(x,t)\frac{x_i}{2t},\\
    		\frac{\partial K_{-s}}{\partial {t}}(x,t)&= K_{-s}(x,t)\left(\frac{|x|^{2}}{4t^{2}}-\frac{C_{1}}{t^{1}}\right),
    	\end{aligned}
    \end{eqnarray}
    and
    \begin{eqnarray}
    	\begin{aligned}\label{L2.3-R2}
    		\frac{\partial^2 K_{-s}}{\partial {x_i}\partial {x_j}}(x,t)=& K_{-s}(x,t)\left[\frac{\delta_{ij}}{2t} +\frac{x_ix_j}{4t^2}\right],\\
    		\frac{\partial^2 K_{-s}}{\partial {t}^2}(x,t)=& K_{-s}(x,t)\left[\frac{C_{1}|x|^{4}}{t^{4}}+\frac{C_{2}|x|^{2}}{t^{3}}-\frac{C_{3}}{t^{2}}\right],\\
            \frac{\partial^2 K_{-s}}{\partial {x_i}\partial {t}}(x,t)=& K_{-s}(x,t)\left[\frac{C_{1}x_{i}}{t^{2}} +\frac{C_{2}x_i|x|^{2}}{t^3}\right].
    	\end{aligned}
    \end{eqnarray}
    Hence, we conclude \eqref{2.3-14} by standard induction.
    \end{proof}
    Next, it is in a position to prove Lemma \ref{translation}, which will be useful for controlling  $D^{k}v$ by $v$ itself.
    \begin{proof}[Proof of Lemma \ref{translation}]
       For any $x\in B_{r}^{c}\cap I_{j}$ with $I_{j}~(j=1,2,\cdots,2^{n})$ representing the $j$-th quadrant of $\R^{n}$, choosing $-\eta_{j}\in I_{j}$ such that each component of $\eta_{j}$ has magnitude $r/n$,  then we have
        \begin{eqnarray}\label{ineq}
            \begin{aligned}
                -x\cdot\eta_{j}\geq \frac{r}{n}|x|.
            \end{aligned}
        \end{eqnarray}
        Consequently, 
    	\begin{eqnarray}\label{3.2-3}
    		\begin{aligned} 
    			|x|^2&= |x+\eta_{j}|^2-|\eta_{j}|^2-2\eta_{j}\cdot x\geq |x+\eta_{j}|^2-\frac{r^{2}}{n}+\frac{2r}{n}|x|\\
    			&\geq|x+\eta_{j}|^2+\frac{r}{2n}|x|+\frac{r^{2}}{2n}.
    		\end{aligned}
    	\end{eqnarray}
    	Thus, submitting (\ref{3.2-3}) into the left side of (\ref{3.2-4}) yields, for any $x\in B^c_r\cap I_j$ and $ t\neq 0,$
    	\begin{eqnarray}
    	\begin{aligned}\label{2.9-x}
    		\frac{|x|^{m}}{|t|^{l}}e^{-\frac{|x|^{2}}{4|t|}}& \leq
            \:\left(\frac{|x|^{m}}{|t|^{l}}e^{-\frac{\frac{r}{2n}|x|+\frac{r^{2}}{2n}}{4|t|}}\right)e^{-\frac{|x+\eta_{j}|^{2}}{4|t|}}\\
            &=Cr^{m-2l}\left(\left(\frac{r|x|}{8n|t|}\right)^{m}e^{-\frac{r|x|}{8n|t|}}\right)\left(\left(\frac{r^{2}}{8n|t|}\right)^{l-m}e^{-\frac{r^{2}}{8n|t|}}\right)e^{-\frac{|x+\eta_{j}|^{2}}{4|t|}}\\
    		&\leq\:Cr^{m-2l}e^{-\frac{|x+\eta_{j}|^{2}}{4|t|}}.
    	\end{aligned}
        \end{eqnarray}
    	In the alternative case, where $x\in B_{r}$ (and hence $|t|\geq r^{2}$), we observe that
    	\begin{align}\label{t-2.11}
    	-\frac{1}{|t|}\leq -\frac{1}{|t|+r^{2}/n}-\frac{r^{2}}{(n+1)|t|^{2}},	
    	\end{align}
    	and
    	\begin{align}\label{2.12-t}
    	\frac{1}{|t|^{\frac{n}{2}+1-s}}\leq\frac{C_{n}}{(|t|+r^{2}/n)^{\frac{n}{2}+1-s}}.
    	\end{align}
    	Hence, combining \eqref{t-2.11} with assumption $m\leq l$, for any $x\in B_{r}$ and $|t|\geq r^{2},$ we derive
    	\begin{eqnarray}\label{2.13-t}
    	\begin{aligned}
    		\frac{|x|^{m}}{|t|^{l}}e^{-\frac{|x|^{2}}{4|t|}} &\leq Cr^{-m}|t|^{m-l}\left(\left(\frac{r^{2}|x|^{2}}{(n+1)|t|^{2}}\right)^{\frac{m}{2}}e^{-\frac{r^{2}|x|^{2}}{4(n+1)|t|^{2}}}\right)e^{-\frac{|x|^{2}}{4(|t|+r^{2}/n)}}\\
    		&\leq Cr^{m-2l}e^{-\frac{|x|^{2}}{4(|t|+r^{2}/n)}}.
    	\end{aligned}
    \end{eqnarray}
    	Together with \eqref{2.9-x} and \eqref{2.12-t}, we obtain (\ref{3.2-4}). Then, by Lemma \ref{prop2.3}, we derive (\ref{2.2-100}). 
        \end{proof}

    Now we establish the higher regularity for $v$.
    \begin{lemma}\label{time-lemma}
    	Let $f\geq 0$ in $\R^{n}\times\R$ and let $v$ be defined in \eqref{est-v}. Then $v\in C^{\infty}(\widetilde{Q}_{\frac{r}{2}})$, and for any $k\in\N^{*}$, there exists a positive constant $C=C(n,k)$ such that
    	\begin{align}
    		\|D^{k}v\|_{L^{\infty}(\widetilde{Q}_{\frac{r}{2}})}\leq Cr^{-k}\|v\|_{L^{\infty}(\widetilde{Q}_{\frac{1+\sqrt{n}}{2\sqrt{n}}r})},
    	\end{align}
       and for any multi-index $\zeta$ and integer $m\in\N$, we have
        \begin{align}\label{v-Derivative}
    		\frac{\partial^{|\zeta|}\partial^{m} v(x,t)}{\partial x^{\zeta}\partial t^{m}}=\int_{-\infty}^{t}\int_{\mathbb{R}^{n}}f_{\widetilde{Q}_{r}^{c}}(y,\tau)\frac{\partial^{|\zeta|}\partial^{m} K_{-s}}{\partial x^{\zeta}\partial t^{m}}(x-y,t-\tau)dyd\tau.
    	\end{align}
    \end{lemma}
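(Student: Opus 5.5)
The plan is to justify differentiation under the integral sign first, and then bound each derivative by values of $v$ at nearby points using Lemma \ref{translation}.

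\textbf{Step 1: differentiation under the integral.} Fix $(x,t)\in\widetilde{Q}_{r/2}$ and a point $(y,\tau)\in\widetilde{Q}_r^c$ with $\tau<t$. Then $(x-y,t-\tau)$ lies outside $\widetilde{Q}_{r/2}$. Indeed, either $|x-y|\geq r/2$, or $|y|\geq r$ forces $|x-y|>r/2$. Otherwise $|\tau|\geq r^2$, so $t-\tau\geq r^2-r^2/4$ when $\tau\leq -r^2$; if instead $\tau\geq r^2$, then $\tau>t$ and the point does not contribute. So the kernel is only evaluated away from its singularity at the origin, and derivatives of $K_{-s}$ of every order are smooth there.

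To obtain \eqref{v-Derivative}, I would find a local majorant. For $(x',t')$ in a small parabolic neighborhood of $(x,t)$, Lemma \ref{prop2.3} and Lemma \ref{translation} (with radius comparable to $r$, say $r/2$ after shrinking) give
\begin{align*}
|D^{\zeta}_x\partial_t^m K_{-s}(x'-y,t'-\tau)| \leq Cr^{-|\zeta|-2m}\Big(\sum_j K_{-s}(x'-y+\eta_j,t'-\tau)+K_{-s}(x'-y,t'-\tau+r^2/(4n))\Big).
\end{align*}
Integrated against $f_{\widetilde{Q}_r^c}\geq0$, the right-hand side is exactly $C r^{-k}\big(\sum_j v(x'+\eta_j,t')+v(x',t'+r^2/(4n))\big)$, which is finite. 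Finiteness holds because $v\leq u-c$ is controlled by $u$, which is locally bounded as a classical solution. Thanks to the sign condition $f\ge0$, dominated convergence then justifies \eqref{v-Derivative}. A uniform dominating function in the parameters $(x',t')$ needs a little care: I would compare the kernels at nearby points using $e^{-|a|^2/4\theta}\le Ce^{-|a'|^2/4\theta'}$ when $|a-a'|$ and $|\theta-\theta'|$ are small relative to $r$, which is available since $\theta\gtrsim r^2$ or $|a|\gtrsim r$. This gives $v\in C^\infty(\widetilde{Q}_{r/2})$.

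\textbf{Step 2: the derivative bound.} Apply \eqref{2.2-100} at each $(x-y,t-\tau)\in\widetilde{Q}_{r/2}^c$, with radius $r/2$ and the associated $\eta_j\in\partial B_{r/(2\sqrt n)}$, and integrate. This gives
\begin{align*}
|D^k v(x,t)|\leq Cr^{-k}\Big(\sum_{j=1}^{2^n}v(x+\eta_j,t)+v\big(x,t+\tfrac{r^2}{4n}\big)\Big).
\end{align*}
The shifted points satisfy $|x+\eta_j|<\frac r2+\frac{r}{2\sqrt n}=\frac{1+\sqrt n}{2\sqrt n}r$. The shifted time obeys $|t+r^2/(4n)|<r^2/4+r^2/(4n)\le(\frac{1+\sqrt n}{2\sqrt n}r)^2$, since $\frac14+\frac1{4n}\le\frac{(1+\sqrt n)^2}{4n}$. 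Hence all shifted points lie in $\widetilde{Q}_{\frac{1+\sqrt n}{2\sqrt n}r}$, and the desired sup bound follows.

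\textbf{Main obstacle.} The delicate point is the sign of the time shift. Shifting $t$ forward adds to the integration range of $\tau$, but because $f\geq0$ this only increases $v$, so the inequality direction is preserved. I also have to check that the new range does not re-enter $\widetilde{Q}_r$, which is automatic since $f_{\widetilde{Q}_r^c}$ vanishes there. The only other thing to verify is that the radius in Lemma \ref{translation} can be taken as $r/2$, consistent with $(x-y,t-\tau)\notin\widetilde{Q}_{r/2}$.
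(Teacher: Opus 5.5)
Your Step 2 is the paper's concluding estimate \eqref{3.3-7}: Lemma \ref{translation} at radius $r/2$, integrated against $f_{\widetilde{Q}_r^c}\ge 0$. Your two checks are exactly what that step needs: that the shifted points lie in $\widetilde{Q}_{\frac{1+\sqrt n}{2\sqrt n}r}$, and that the forward time shift only enlarges the $\tau$-range. The paper leaves the second one implicit.

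Step 1 follows a genuinely different route. The paper works with weak derivatives. It tests $h_k$ against $\varphi$ and splits the difference quotient into a kernel increment $J_1$ and the new time slab $J_2$. It kills $J_2$ by dominated convergence against $v_{r/2}$, so it gets $\partial_t^k v=h_k$ only a.e. It then recovers smoothness through $W^{k,\infty}\subset C^{k-1}$, first in $t$ and then ``similarly'' in $x$.

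You instead observe that every contributing $(x-y,t-\tau)$ stays outside $\widetilde{Q}_{r/2}$; in particular $|x-y|>r/2$ whenever $\tau$ is near $t$. Regard $K_{-s}$ as extended by $0$ to $\{\theta\le 0\}$; this extension is $C^\infty$ on $\R^{n+1}\setminus\{0\}$. Then the moving upper limit produces no boundary term at all, and this is what replaces $J_2$, so say it explicitly. Differentiation under the integral then gives \eqref{v-Derivative} at every point, for all mixed derivatives at once, with continuous derivatives and no Sobolev embedding. The price is a majorant uniform in the evaluation point, which the paper's step $J_1\to\int h_{k+1}\varphi$ tacitly needs as well.

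That majorant is where your sketch must be corrected. The comparison $e^{-|a|^2/4\theta}\le Ce^{-|a'|^2/4\theta'}$ for $|a-a'|$ and $|\theta-\theta'|$ small is false as stated. Take $\theta=\theta'$ and $|a'|=|a|+\delta$: the ratio is $e^{(2\delta|a|+\delta^2)/4\theta}$, unbounded as $|a|\to\infty$, or as $\theta\to 0$ with $|a|\ge r/2$.

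The correct tool is one-sided in time. If $|b-b'|\le\delta$ and $\theta'\ge\theta+\epsilon$, Cauchy--Schwarz gives $|b'|^2/\theta'\le(|b|+\delta)^2/(\theta+\epsilon)\le |b|^2/\theta+\delta^2/\epsilon$. Hence $e^{-|b|^2/4\theta}\le e^{\delta^2/4\epsilon}e^{-|b'|^2/4\theta'}$. So for $|x'-x|<\delta$ and $|t'-t|<\delta^2$, with $\delta\ll r$, dominate by kernels based at the fixed point $x$ and the later time $t+2\delta^2$. There are two regimes.
\begin{itemize}
\item Where $t'-\tau\ge r^2/2$, apply Lemma \ref{translation} at $(x'-y,t'-\tau)$ and then this comparison; the power prefactors are comparable there.
\item Where $t'-\tau<r^2/2$, you have $|\tau|<r^2$, hence $|y|\ge r$ and $|x'-y|>r/2$. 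Here the power prefactor of $\partial^\sigma K_{-s}(x'-y,t'-\tau)$ blows up as $\tau\to t'$. It must be absorbed by the additive slack in \eqref{3.2-3} at radius $r/2$, which survives the perturbation $x\to x'$ for $\delta\ll r$. The Gaussian comparison alone does not absorb it.
\end{itemize}
The resulting majorant integrates against $f$ to at most $C\big(\sum_j v(x+\eta_j,t+2\delta^2)+v(x,t+2\delta^2+\frac{r^2}{4n})\big)$. This is finite by the same $f\ge 0$ monotonicity you use in Step 2. With this fix your argument is complete.
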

    \begin{proof}
        We first consider the regularity of $v$ with respect to $t$. Denote
    	\begin{align}\label{3.3-11}
    		h_{k}(x,t)=\int_{-\infty}^{t}\int_{\mathbb{R}^{n}}f_{\widetilde{Q}_{r}^{c}}(y,\tau)\frac{\partial^{k} K_{-s}}{\partial t^{k}}(x-y,t-\tau)dyd\tau.
    	\end{align}
    	Now we prove 
        $\frac{\partial^{k} v}{\partial t^{k}}=h_{k}$
        by mathematical induction. Assume that this holds for some $k\in\N$ and
        almost every $(x,t)\in \widetilde{Q}_{r/2}$, 
        then we will show that it is also true for $k+1$. 
        
    	Indeed, for any $x\in B_{r/2}$ and $\varphi\in C_0^\infty((-r^{2}/4,r^{2}/4))$ with $\varphi\geq 0$, we have
    	
    	\begin{eqnarray}\label{3.3-6}
    		\begin{aligned}
    		\int_{-\frac{r^{2}}{4}}^{\frac{r^{2}}{4}}\frac{\partial h_{k}(x,t)}{\partial t}\varphi(t)dt
    		&=-\int_{-\frac{r^{2}}{4}}^{\frac{r^{2}}{4}}h_{k}(x,t)\frac{d\varphi(t)}{dt}dt=-\int_{-\frac{r^{2}}{4}}^{\frac{r^{2}}{4}}h_{k}(x,t)\left(\lim_{\delta\to0}\frac{\varphi(t)-\varphi(t-\delta)}{\delta}\right)dt\\
    		&=-\lim_{\delta\to0}\frac{1}{\delta}\left\{\int_{-\frac{r^{2}}{4}}^{\frac{r^{2}}{4}}h_{k}(x,t)\varphi(t)dt-\int_{-\frac{r^{2}}{4}}^{\frac{r^{2}}{4}}h_{k}(x,t)\varphi(t-\delta)dt\right\}\\
    		&=-\lim_{\delta\to0}\frac{1}{\delta}\left\{\int_{-\frac{r^{2}}{4}}^{\frac{r^{2}}{4}}h_{k}(x,t)\varphi(t)dt-\int_{-\frac{r^{2}}{4}-\delta}^{\frac{r^{2}}{4}-\delta}h_{k}(x,t+\delta)\varphi(t)dt\right\}\\
    		&=-\lim_{\delta\to0}\frac{1}{\delta}\Bigg\{\int_{-\frac{r^{2}}{4}}^{\frac{r^{2}}{4}}[h_{k}(x,t)-h_{k}(x,t+\delta)]\varphi(t)dt\\
    		&\quad-\left(\int_{-\frac{r^{2}}{4}-\delta}^{-\frac{r^{2}}{4}}-\int_{\frac{r^{2}}{4}-\delta}^{\frac{r^{2}}{4}}\right)h_{k}(x,t+\delta)\varphi(t)dt\Bigg\}\\
    		&=-\lim_{\delta\to0}\int_{-\frac{r^{2}}{4}}^{\frac{r^{2}}{4}}\frac{h_{k}(x,t)-h_{k}(x,t+\delta)}{\delta}\varphi(t)dt,
    	\end{aligned}
    \end{eqnarray}
    	where the last equality follows from
    	\begin{align*}
    		\lim_{\delta\to0}\frac1\delta\int_{-\frac{r^{2}}{4}-\delta}^{-\frac{r^{2}}{4}}h_{k}(x,t+\delta)\varphi(t)dt
    		&=(-1)^{k}\lim_{\delta\to0}\frac1\delta\int_{-\frac{r^{2}}{4}-\delta}^{-\frac{r^{2}}{4}}v(x,t+\delta)\frac{\partial^{k} \varphi}{\partial t^{k} }(t)dt=0,\\
    		\lim_{\delta\to0}\frac1\delta\int_{\frac{r^{2}}{4}-\delta}^{\frac{r^{2}}{4}}h_{k}(x,t+\delta)\varphi(t)dt
    		&=(-1)^{k}\lim_{\delta\to0}\frac1\delta\int_{\frac{r^{2}}{4}-\delta}^{\frac{r^{2}}{4}}v(x,t+\delta)\frac{\partial^{k} \varphi}{\partial t^{k} }(t)dt=0.\\ 
    	\end{align*}
    	Next, we write 
        \begin{equation}\label{2.21}
        \int_{-\frac{r^{2}}{4}}^{\frac{r^{2}}{4}}\frac{h_{k}(x,t+\delta)-h_{k}(x,t)}{\delta}\varphi(t)dt:=J_{1}+J_{2},
        \end{equation}
        where
    	\begin{eqnarray}\label{3.3-4}
    		\begin{aligned}
    		&J_{1}=\int_{-\frac{r^{2}}{4}}^{\frac{r^{2}}{4}}\int_{-\infty}^t\int_{\mathbb{R}^n}f_{\widetilde{Q}_{r}^{c}}(y,\tau)\frac{1}{\delta}\left(\frac{\partial^{k} K_{-s}}{\partial t^{k}}(x-y,t+\delta-\tau)-\frac{\partial^{k} K_{-s}}{\partial t^{k}}(x-y,t-\tau)\right)dyd\tau\varphi(t)dt,\\
    		&J_{2}=\frac1\delta\int_{-\frac{r^{2}}{4}}^{\frac{r^{2}}{4}}\int_t^{t+\delta}\int_{\mathbb{R}^n}f_{\widetilde{Q}_{r}^{c}}(y,\tau)\frac{\partial^{k} K_{-s}}{\partial t^{k}}(x-y,t+\delta-\tau)dyd\tau\varphi(t)dt.
    		\end{aligned}\end{eqnarray}

        Note that as $\delta\to 0$, 
    	\begin{eqnarray}\label{3.3-5}
    	\begin{aligned}
    		J_1&\to\int_{-\frac{r^{2}}{4}}^{\frac{r^{2}}{4}}\int_{-\infty}^t\int_{\mathbb{R}^n}f_{\widetilde{Q}_{r}^{c}}(y,\tau)\frac{\partial^{k+1} K_{-s}}{\partial t^{k+1}}(x-y,t-\tau)dyd\tau\varphi(t)dt=\int_{-\frac{r^{2}}{4}}^{\frac{r^{2}}{4}}h_{k+1}(x,t)\varphi(t)dt.
    	\end{aligned}
        \end{eqnarray}
    	In the next step, we prove that
    	\begin{align}\label{3.3-2}
    		J_2\to 0~~\:\mathrm{as}\:\delta\to0.
    	\end{align}
        By Lemma \ref{prop2.3}, for sufficiently small $\delta>0$ and any $\tau\in(0,\delta)$,
        \begin{eqnarray}
        \begin{aligned}
            \left|\frac{\partial^{k} K_{-s}(y,\tau)}{\partial t^{k}}\right|\leq CK_{-s}(y,\tau)\sum_{i=0}^{k}\frac{|y|^{2i}}{\tau^{i+k}}
            \leq CK_{-s}(y,\delta)\sum_{i=0}^{k}\frac{|y|^{2i}}{\delta^{i+k}}.
        \end{aligned}
        \end{eqnarray}
    	Thus by Lemma \ref{global},
        \begin{eqnarray}\label{3.3-3}
    	\begin{aligned}
    		J_{2}& =\frac{1}{\delta}\int_{-\frac{r^{2}}{4}}^{\frac{r^{2}}{4}}\int_{t}^{t+\delta}\int_{\mathbb{R}^{n}}f_{\widetilde{Q}_{r}^{c}}(y,\tau)\frac{\partial^{k} K_{-s}}{\partial t^{k}}(x-y,t+\delta-\tau)dyd\tau\varphi(t)dt  \\
    		&=\frac{1}{\delta}\int_{-\frac{r^{2}}{4}}^{\frac{r^{2}}{4}}\int_{0}^{\delta}\int_{\mathbb{R}^{n}}f_{\widetilde{Q}_{r}^{c}(0,-t)}(y,\tau+t)\frac{\partial^{k} K_{-s}}{\partial t^{k}}(x-y,\delta-\tau)dyd\tau\varphi(t)dt \\
    		&\leq C\sum_{i=0}^{k}\frac{1}{\delta}\int_{-\frac{r^{2}}{4}}^{\frac{r^{2}}{4}}\int_{0}^{\delta}\int_{\mathbb{R}^{n}}f_{\widetilde{Q}_{r}^{c}(0,-t)}(y,\tau+t)K_{-s}(x-y,\delta)\frac{|x-y|^{2i}}{\delta^{i+k}}dyd\tau\varphi(t)dt\\
            &\leq Cr^{-2k}\frac{1}{\delta}\int_{-\frac{r^{2}}{4}}^{\frac{r^{2}}{4}}\int_{0}^{\delta}\int_{\mathbb{R}^{n}}f_{\widetilde{Q}_{r}^{c}(0,-t)}(y,\tau+t)\frac{e^{-\frac{|x-y|^{2}}{8\delta}}}{\delta^{\frac{n}{2}+1-s}}dyd\tau\varphi(t)dt.
            \end{aligned}
        \end{eqnarray}
        Denote
        \begin{align}
            J_{3}:=\frac{1}{\delta}\int_{-\frac{r^{2}}{4}}^{\frac{r^{2}}{4}}\int_{0}^{\delta}\int_{\mathbb{R}^{n}}f_{\widetilde{Q}_{r}^{c}(0,-t)}(y,\tau+t)K_{-s}(x-y,2\delta)dyd\tau\varphi(t)dt.
        \end{align}
        Then, for $\delta$ sufficiently small,
            \begin{eqnarray}
    	\begin{aligned}\label{2.28}
    		J_{3}&=\frac{1}{\delta}\int_{0}^{\delta}\int_{\mathbb{R}^{n}}\int_{-\frac{r^{2}}{4}}^{\frac{r^{2}}{4}}f_{\widetilde{Q}_{r}^{c}(0,-t)}(y,\tau+t)K_{-s}(x-y,2\delta)\varphi(t)dtdyd\tau  \\
    		&=\frac{1}{\delta}\int_{0}^{\delta}\int_{\mathbb{R}^{n}}\int_{-\frac{r^{2}}{4}+\tau}^{\frac{r^{2}}{4}+\tau}f_{\widetilde{Q}_{r}^{c}(0,\tau-t)}(y,t)K_{-s}(x-y,2\delta)\varphi(t-\tau)dtdyd\tau  \\
    		&\leq\sup_{t\in(-\frac{r^{2}}{4},\frac{r^{2}}{4})}\varphi(t)\frac{1}{\delta}\int_{0}^{\delta}\int_{\mathbb{R}^{n}}\int_{-\frac{r^{2}}{4}}^{\frac{3r^{2}}{8}}f_{\widetilde{Q}_{r-\delta}^{c}(0,-t)}(y,t)K_{-s}(x-y,2\delta)dtdyd\tau  \\
    		&\leq C\int_{\mathbb{R}^{n}}\int_{-\frac{r^{2}}{4}}^{\frac{3r^{2}}{8}}f_{\widetilde{Q}_{\frac{r}{2}}^{c}}(y,t)K_{-s}(x-y,2\delta)dtdy
    		:=J_{4}.
    	\end{aligned}
        \end{eqnarray} 
    	We just need to show that
    	\begin{align}\label{3.3-1}
    		J_{4}\to0 ~~\text{as}~\delta\to0.
    	\end{align}
    	When $\delta$ is small enough, there exists a $t_{0}\in (\frac{3r^{2}}{8},\frac{r^{2}}{2})$ such that for all  $\tau\in(-\frac{r^{2}}{4},\frac{3r^{2}}{8})$, hold that
    	$$f_{\widetilde{Q}_{\frac{r}{2}}^{c}}(y,t) K_{-s}(x-y,2\delta)\leq f_{\widetilde{Q}_{\frac{r}{2}}^{c}}(y,t) K_{-s}(x-y,t_{0}-t).$$
    	Consequently,
    	\begin{eqnarray}\label{3.27}
    	\begin{aligned}
    		J_{4}
    		\leq& C\int_{\mathbb{R}^{n}}\int_{-\frac{r^{2}}{4}}^{\frac{3r^{2}}{8}}f_{\widetilde{Q}_{\frac{r}{2}}^{c}}(y,t)K_{-s}(x-y,t_{0}-t)dtdy\\
    		\leq&C\int_{\mathbb{R}^{n}}\int_{-\infty}^{t_{0}}f_{\widetilde{Q}_{\frac{r}{2}}^{c}}(y,t)K_{-s}(x-y,t_{0}-t)dtdy
    		\leq \|v_{\frac{r}{2}}(x,\cdot)\|_{L^{\infty}((-\frac{r^{2}}{2},\frac{r^{2}}{2}))}.
    	\end{aligned} 
        \end{eqnarray} 
    	In addition, since
    	$$f_{\widetilde{Q}_{\frac{r}{2}}^{c}}(y,t)K_{-s}(x-y,2\delta)=f_{\widetilde{Q}_{\frac{r}{2}}^{c}}(y,t)\frac{e^{-\frac{|x-y|^{2}}{8\delta}}}{\delta^{\frac{n}{2}+1-s}}\rightarrow 0~a.e. \text{ for } (y,t)\in \R^{n}\times(-\frac{r^{2}}{4},\frac{3r^{2}}{8}),$$
    	then the validity of (\ref{3.3-1}) follows from Lebesgue's Dominated Convergence Theorem.

        Together with (\ref{3.3-3}), \eqref{2.28} and (\ref{3.3-1}), we get (\ref{3.3-2}).
    	Moreover, by synthesizing results  (\ref{2.21})-(\ref{3.3-2}), we arrive at
    	\begin{align}
    		\int_{-\frac{r^{2}}{4}}^{\frac{r^{2}}{4}}\frac{\partial h_{k}(x,t)}{\partial t}\varphi(t)dt=\int_{-\frac{r^{2}}{4}}^{\frac{r^{2}}{4}}\int_{-\infty}^t\int_{\mathbb{R}^n}f_{\widetilde{Q}_{r}^{c}}(y,\tau)\frac{\partial^{k+1} K_{-s}}{\partial t^{k+1}}(x-y,t-\tau)dyd\tau\varphi(t)dt,
    	\end{align}
       which establishes that for almost every $(x,t) \in \widetilde{Q}_{r/2}$,
        \begin{align}\label{3.3-8}
        	\frac{\partial^{k+1} v}{\partial t^{k+1}}(x,t)=\frac{\partial h_{k}}{\partial t}(x,t)=\int_{-\infty}^t\int_{\mathbb{R}^n}f_{\widetilde{Q}_{r}^{c}}(y,\tau)\frac{\partial^{k+1} K_{-s}}{\partial t^{k+1}}(x-y,t-\tau)dyd\tau=h_{k+1}(x,t).
        \end{align}
        Thus, we complete the induction.
        
        Furthermore, based on Lemma \ref{translation}, we can find $\eta_{j}\in B_{r/(2\sqrt{n})}$ such that
      \begin{eqnarray}
        \begin{aligned}\label{3.3-7}
        	\left|\frac{\partial^{k} v}{\partial t^{k}}(x,t)\right|
        	\leq&C\int_{-\infty}^{t}\int_{\R^{n}}f_{\widetilde{Q}_{r}^{c}}(y,\tau)|D^{2k}K_{-s}(x-y,t-\tau)|dyd\tau\\
        	\leq&C\int_{-\infty}^{t}\int_{\R^{n}}f_{\widetilde{Q}_{r}^{c}}(y,\tau)r^{-2k}\Bigg(\sum_{j=1}^{2^n}K_{-s}(x-y+\eta_{j},t-\tau)\\
            &\quad+K_{-s}\left(x-y,t-\tau+\frac{r^{2}}{4n}\right)\Bigg)dyd\tau
        	\leq Cr^{-2k}\|v\|_{L^{\infty}(\widetilde{Q}_{\frac{1+\sqrt{n}}{2\sqrt{n}}r})}.
        \end{aligned}
        \end{eqnarray} 

        Therefore, for any $x\in B_{\frac{r}{2}}$, $v(x,\cdot)\in W^{k,\infty}((-\frac{r^{2}}{4},\frac{r^{2}}{4}))$, by the Sobolev Embedding
    	Theorem, we deduce that 
    	$v(x,\cdot)\in C^{k-1}((-\frac{r^{2}}{4},\frac{r^{2}}{4}))$, thus
    	we have proved $v$ is smooth with respect to $t$.
        
        By a similar discussion, we can establish the $C^{\infty}$ regularity of $v$ with respect to $x$. Hence, we complete the proof of this lemma.
    \end{proof}
    Now, Proposition \ref{v-E} follows from Lemma \ref{time-lemma}.
    For $f\in L^{1}$, a similar argument to the proof of Lemma \ref{time-lemma} yields the following analogous lemma—--and hence Theorem \ref{v-E2}.
    \begin{lemma}\label{3.7}
    	Let $f\in L^{1}(\R^{n}\times \R)$ and $v$ be defined by \eqref{est-v}. Then $v\in C^{\infty}(\widetilde{Q}_{\frac{r}{2}})$ satisfies \eqref{v-Derivative}, and for any $k\in \N$, there exists a positive constant $C=C(n,k,s)$ such that
    		\begin{align}
    			\|D^{k}v\|_{L^{\infty}(\widetilde{Q}_{\frac{r}{2}})}\leq Cr^{-k}\|f\|_{L^{1}(\R^{n}\times \R)}.
    		\end{align}	
    \end{lemma}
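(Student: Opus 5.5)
We follow the scheme of Lemma \ref{time-lemma}, with the pointwise comparison of $v$ against itself replaced by a direct integral bound in terms of $\|f\|_{L^{1}}$. The starting point is that on the region where $v$ lives, the kernel and all its derivatives are uniformly bounded. For $(x,t)\in\widetilde{Q}_{r/2}$ and $(y,\tau)\in\widetilde{Q}_{r}^{c}$ with $\tau<t$, the point $(x-y,t-\tau)$ lies outside $\widetilde{Q}_{r/2}$. Indeed, either $|x-y|\geq r/2$, or $|y|<r$ forces $|\tau|\geq r^{2}$ and hence $t-\tau\geq 3r^{2}/4$.

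The first step is to combine Lemma \ref{prop2.3} with Lemma \ref{global}, applied with radius $r/2$. Each term in the derivative bounds has the form
\[
\frac{|z|^{2a}}{\sigma^{b}}\,\frac{e^{-|z|^{2}/(4\sigma)}}{\sigma^{\frac n2+1-s}}, \qquad a\le b .
\]
Since $s<1$, we can absorb the power $\sigma^{-(\frac n2+1-s)}$ into the exponent: it contributes $|z|^{0}/\sigma^{\frac n2+1-s}$ with $0\le \frac n2+1-s$, and the exponent of the numerator never exceeds that of the denominator. Lemma \ref{global} with $A=1/4$ then gives
\[
\sup_{(z,\sigma)\in \widetilde{Q}_{r/2}^{c},\,\sigma>0}|D^{\zeta}_{x}\partial_{t}^{m}K_{-s}(z,\sigma)|\le C r^{-(n+2-2s)-|\zeta|-2m}.
\]
From this,
\[
|D^{\zeta}_x\partial_t^m v(x,t)|\le C r^{-(n+2-2s)-|\zeta|-2m}\|f\|_{L^{1}} .
\]
The stated estimate $Cr^{-k}\|f\|_{L^1}$ is then to be read either with $r\le 1$ and $C$ absorbing the scaling, or after normalizing $r=1$ by scaling. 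I would state the bound with the natural power of $r$ and note that it reduces to the claim.

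The second step justifies differentiation under the integral, i.e. \eqref{v-Derivative}. For spatial derivatives this is immediate by dominated convergence: the difference quotients of $K_{-s}$ are bounded by the uniform bound above, applied on a slightly larger exceptional region, times the integrable majorant $|f|$. The time derivative needs more care, since the upper limit $\tau<t$ moves with $t$. As in \eqref{3.3-6}--\eqref{3.3-3}, we split the difference quotient into $J_{1}$ and $J_{2}$. The term $J_{1}$ converges to $h_{k+1}$ by dominated convergence with the majorant $C|f|$. The term $J_{2}$ is simpler here than in Lemma \ref{time-lemma}. For $(x,t)\in\widetilde{Q}_{r/2}$ and $\tau\in(t,t+\delta)$, we have $|\tau|<r^{2}$ for small $\delta$, so $f_{\widetilde{Q}_{r}^{c}}(y,\tau)\neq0$ forces $|y|\ge r$ and hence $|x-y|\ge r/2$. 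On that set,
\[
\left|\partial_{t}^{k}K_{-s}(x-y,\sigma)\right|\le C\,\sigma\, r^{-N}\quad\text{for } 0<\sigma<\delta ,
\]
by the Gaussian factor. Therefore
\[
|J_{2}|\le C r^{-N}\delta\cdot\frac1\delta\int_{t}^{t+\delta}\!\!\int|f|\,dy\,d\tau\to0,
\]
using absolute continuity of the integral of $f\in L^{1}$. This gives $\partial_t^{k+1}v=h_{k+1}$ almost everywhere. By induction and the uniform bounds, all derivatives exist and are continuous, since the integrands are continuous in $(x,t)$ with an integrable majorant. So $v\in C^{\infty}(\widetilde{Q}_{r/2})$.

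I expect the main obstacle to be the boundary term $J_{2}$, namely making sure the singularity of $K_{-s}$ at $\sigma\to0^{+}$ cannot meet the support of $f_{\widetilde{Q}_{r}^{c}}$. This works only because near time $t$ the support of $f_{\widetilde{Q}_{r}^{c}}$ is spatially separated from $x$. This is where the choice of the cylinder $\widetilde{Q}_{r}$, which is two-sided in time, is essential. Nonnegativity of $f$ is no longer needed, because $L^{1}$ integrability replaces the comparison with $v$ used in \eqref{3.27}.
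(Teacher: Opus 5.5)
Your qualitative argument is correct and follows the route the paper intends; the paper itself only says the lemma follows by ``a similar argument'' to Lemma~\ref{time-lemma}. The separation $(x-y,t-\tau)\notin\widetilde{Q}_{r/2}$, the $J_1/J_2$ splitting, and your treatment of $J_2$ are all sound. For $J_2$ you use $|\partial_t^kK_{-s}(z,\sigma)|\le C\sigma r^{-N}$ for $|z|\ge r/2$, $0<\sigma<\delta$, together with absolute continuity of $\int|f|$, which is simpler than \eqref{2.28}--\eqref{3.27}. This gives $v\in C^\infty(\widetilde{Q}_{r/2})$ and \eqref{v-Derivative} for every $r>0$, with no sign condition on $f$. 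Your quantitative bound
\[
|D^{\zeta}_x\partial_t^m v|\le C r^{-(n+2-2s)-|\zeta|-2m}\|f\|_{L^1(\R^n\times\R)}\quad\text{on }\widetilde{Q}_{r/2}
\]
is also correct.

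The gap is the last step, where you assert that this ``reduces to the claim''. It does not. For $r<1$ the extra factor $r^{-(n+2-2s)}$ exceeds $1$ and blows up as $r\to0$, so no constant $C(n,k,s)$ can absorb it. Rescaling to $r=1$ does not help either: $\|f(r\,\cdot,r^2\,\cdot)\|_{L^1}=r^{-(n+2)}\|f\|_{L^1}$ while $v$ scales by $r^{2s}$, which reproduces exactly that factor.

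Nor can the step be repaired, because the displayed estimate is false for small $r$. Take $f=\chi_E$ with $E=B_{r/2}(2re_1)\times(-2r^2,-r^2)\subset\widetilde{Q}_r^c$. For $(y,\tau)\in E$ one has $|y|\le 5r/2$ and $r^2<-\tau<2r^2$, so $K_{-s}(-y,-\tau)\ge c(n,s)\,r^{-(n+2-2s)}$. Hence $v(0,0)\ge c\,r^{-(n+2-2s)}\|f\|_{L^1}$, with $(0,0)\in\widetilde{Q}_{r/2}$. The same set gives $|\partial_{x_1}v(0,0)|\ge c\,r^{-(n+2-2s)-1}\|f\|_{L^1}$, since $\partial_{x_1}K_{-s}(-y,-\tau)=\frac{y_1}{2(-\tau)}K_{-s}(-y,-\tau)$ is positive and of size $r^{-1}K_{-s}$ on $E$.

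The correct statement is the one you actually proved, with $r^{-k-(n+2-2s)}$ in place of $r^{-k}$. It agrees with the stated bound at $r=1$ and implies it for $r\ge1$. This is harmless for the paper. The quantitative bound is only used at $r=1$ (Theorem~\ref{v-E2}, hence Theorem~\ref{3.10}). In Lemma~\ref{lemma 3.8} only smoothness and \eqref{v-Derivative} for $W_{P,r}$ are needed, and your argument gives these for all $r$. You should state the lemma with the correct power of $r$, or restrict it to $r=1$, rather than claim a reduction that fails.
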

    \begin{theorem}\label{v-E2}
    Let $f\in L^{1}(\R^{n}\times \R)$ and let $v_{1}$ be defined by \eqref{est-v} for $r=1$. Then $v_{1}\in C^{\infty}(0,0;1/2)$, and for any $k\in \N$, there exists a positive constant $C=C(n,k,s)$ such that
    	\begin{align}
    		\|v_{1}\|_{C^{k}(0,0;\frac{1}{2})}\leq C\|f\|_{L^{1}(\R^{n}\times \R)}.	
    	\end{align}
    \end{theorem}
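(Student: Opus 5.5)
We derive Theorem \ref{v-E2} from Lemma \ref{3.7} at $r=1$, so the real work is proving Lemma \ref{3.7}. We follow the proof of Lemma \ref{time-lemma}, but bound kernel derivatives directly by constants in place of the translated kernels. Nonnegativity of $f$ is not available, so we replace $f$ by $|f|$ in every domination argument; the difference quotients and dominated convergence then apply to $f^{+}$ and $f^{-}$ separately, or directly with $|f|$ as the dominating weight.

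The key pointwise input is the following. For $(x,t)\in\widetilde{Q}_{1/2}$ and $(y,\tau)\in\widetilde{Q}_1^c$ with $\tau<t$, Lemma \ref{prop2.3} gives
$$|D^{k}K_{-s}(x-y,t-\tau)|\le C\,K_{-s}(x-y,t-\tau)\sum \frac{|x-y|^{a}}{(t-\tau)^{b}}.$$
We then show that $K_{-s}(z,\sigma)\,|z|^{a}\sigma^{-b}$ is uniformly bounded on the relevant set. That set is the image of $\{(x-y,t-\tau)\}$; it lies outside a parabolic cylinder $\widetilde{Q}_{1/2}$ around the origin, since either $|x-y|\ge 1/2$ or $t-\tau\ge 1-1/4=3/4$. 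Writing
$$K_{-s}(z,\sigma)|z|^{a}\sigma^{-b}=C\,|z|^{a}\sigma^{-(b+\frac n2+1-s)}e^{-|z|^2/(4\sigma)},$$
Lemma \ref{global} bounds this by a constant $C(n,k,s)$ whenever $a/2\le b+\frac n2+1-s$. Each term produced by Lemma \ref{prop2.3} satisfies this, because there $|x|^{2i+k-2j}/t^{k+m+i-j}$ has time exponent at least half the spatial exponent. Large $\sigma$ is harmless because the exponent of $\sigma$ is positive. Hence
$$\sup_{\widetilde{Q}_{1/2}}|D^{k}K_{-s}(x-\cdot,t-\cdot)\chi_{\widetilde{Q}_1^c}|\le C.$$
Integrating against $|f|$ gives
$$\Big|\int\!\!\int f_{\widetilde{Q}_1^c}D^kK_{-s}\Big|\le C\|f\|_{L^1}.$$
For general $r$ the same holds with the factor $r^{-k}$, and we note that Lemma \ref{global} also supplies the extra singular factor $r^{-(n+2-2s)}$; for Theorem \ref{v-E2} only $r=1$ matters.

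Next we justify differentiation under the integral, formula \eqref{v-Derivative}, by the same induction as in Lemma \ref{time-lemma}. The term $J_1$ converges by dominated convergence, since the kernel derivatives are uniformly bounded near $\widetilde{Q}_{1/2}$ and $f\in L^1$. The boundary term $J_2$ involves $\tau\in(t,t+\delta)$ with kernel time $\delta-\tau'$ small. There $(y,\tau)\notin\widetilde{Q}_1$ forces $|x-y|\ge1/2$, so the kernel derivatives are bounded by $C\delta^{-N}e^{-1/(16\delta)}\to0$ uniformly. Thus
$$|J_2|\le \frac{1}{\delta}\int_{t}^{t+\delta}\!\int|f|\,o(1)\to0,$$
since $\int_{t}^{t+\delta}\!\int|f|\,dy\,d\tau\le\|f\|_{L^1}$ and the factor $1/\delta$ is absorbed by the super-polynomial decay. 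This step is simpler than the nonnegative case, because no comparison with $v_{r/2}$ is needed.

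With all derivatives bounded by $C\|f\|_{L^1}$ on $\widetilde{Q}_{1/2}$, the conclusion of Theorem \ref{v-E2} follows. We take $P$ to be the parabolic Taylor polynomial of $v_1$ at $(0,0)$ of degree $k$. Taylor's theorem in the parabolic scaling bounds the remainder by $C\|D^{k+1}v_1\|_{L^\infty}\rho^{k+1}$ for $\rho\le1/2$; when the time-derivative step of degree two overshoots, we use $\|D^{k+2}v_1\|_{L^\infty}$ as well. This gives $\nu_{v_1}(\rho)\le C\|f\|_{L^1}\rho^{k+1}$. The series in \eqref{1.1000} with $\alpha=0$ is then bounded by a convergent geometric series, hence by $Km$. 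The coefficients of $P$ are bounded by the same constant, so $\|v_1\|_{C^k(0,0;1/2)}\le C\|f\|_{L^1}$ for every $k$.

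The main obstacle we expect is the boundary term $J_2$ for signed $f$ and the uniform kernel bound near the cylinder's temporal boundary, where $t-\tau$ is small but $|x-y|\ge1/2$. Both are handled by the Gaussian factor, which beats any negative power of $t-\tau$.
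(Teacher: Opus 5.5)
Your proposal is correct and follows the paper's route. The paper derives Theorem \ref{v-E2} from Lemma \ref{3.7}, proved ``by a similar argument'' to Lemma \ref{time-lemma}, and your adaptation (dominating by $|f|$, bounding kernel derivatives uniformly via Lemma \ref{global} instead of by translated kernels, and killing $J_2$ through Gaussian decay), followed by the parabolic Taylor expansion, is exactly what that argument requires. Your side remark is also right: for general $r$ the bound carries an extra factor $r^{-(n+2-2s)}$ that Lemma \ref{3.7} as stated omits, but this is harmless at $r=1$.
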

    \section{Estimate of the Internal Part $w_{1}$} \label{S3}
   In this section, we establish the pointwise Schauder regularity for $w_{1}$, defined in \eqref{1.25-1}. It is observed that for $(x,t)\in Q_{1}$,
    \begin{align}\label{1.25}
    		w_{1}(x,t)=\int_{-\infty}^{t}\int_{\R^{n}}f_{Q_{1}}(y,\tau)K_{-s}(x-y,t-\tau)dyd\tau,
    \end{align}
   and similarly for $v_{1}$.
   
   First, we introduce a $C_0^\infty(\R^{n+1})$ cut-off function $\psi:\R^{n+1}\rightarrow[0,1]$ satisfying
    		$$\psi(x,t)=
    		\begin{cases}
    			0,~~~(x,t)\in \widetilde{Q}_{2}^{c},\\
    			1,~~~(x,t)\in \widetilde{Q}_{1}.
    		\end{cases}$$   	      
    Then for any $0<r\leq 1$, we decompose $w_{1}$ into three components.
    \begin{definition}\label{DEF}
    	Let $f\in C_{1}^{k+\alpha}(0,0;1)$ (or $ C_{1}^{k+\alpha,Dini}(0,0;1)$) with polynomial $P$, for some $k\in\N$ and $\alpha\in[0,1)$. Then we define $J=P\psi$ and decompose $w_{1}$ in $Q_{1}$:
        \begin{align}
    	w_{1}(x,t)=S_{r}(x,t)+T_{r}(x,t)+u_{P}(x,t),
        \end{align}
        where
    	\begin{align}
    			S_{r}(x,t)&=\int_{-\infty}^{t}\int_{\R^{n}}(f-J)_{Q_{r}}(y,\tau)K_{-s}(x-y,t-\tau)dyd\tau,\label{def-S}\\
    		T_{r}(x,t)&=\int_{-\infty}^{t}\int_{\R^{n}}(f-J)_{Q_{1}\backslash Q_{r}}(y,\tau)K_{-s}(x-y,t-\tau)dyd\tau, \label{def-T}
    	\end{align}
        and 
    	\begin{align}\label{def-u}
    		u_{P}(x,t)=\int_{-\infty}^{t}\int_{\R^{n}}J_{Q_{1}}(y,\tau)K_{-s}(x-y,t-\tau)dyd\tau.
    	\end{align}
    \end{definition} 
    \subsection{Estimate for $S_{r}$}
    We first present several auxiliary lemmas that will be essential in our analysis for $S_{r}$ .
    \begin{lemma}[Local estimate for kernel function]\label{local}
    	For any $a,b\geq 0$, $A>0$, there exists a positive constant $C=C(a,b,A)$ such that
    	\begin{align}\label{3.6}
    		\frac{|x|^{2a}}{|t|^{b}}e^{-A\frac{|x|^{2}}{|t|}}\leq Cr^{2(a-b)},~~ \text{ for any $(x,t)\in  \widetilde{Q}_{2r}\backslash (\widetilde{Q}_{r}\cup\{t=0\})$.}
    	\end{align}
    \end{lemma}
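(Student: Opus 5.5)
This is an elementary two-sided bound, and I would prove it by a case split on whether space or time dominates, as in the proof of Lemma \ref{global}. Throughout, write $\rho=|x|^2/|t|>0$ and use the basic fact that for any $p\ge 0$ the function $\rho\mapsto \rho^{p}e^{-A\rho}$ is bounded on $(0,\infty)$ by a constant $C(p,A)$. The key point is that on the annulus $\widetilde{Q}_{2r}\setminus(\widetilde{Q}_{r}\cup\{t=0\})$ we have both $|x|<2r$ and $|t|<4r^2$. At least one of $|x|\ge r$ or $|t|\ge r^2$ holds, which gives the two cases.

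\emph{Case 1: $|t|<r^2$, so $r\le |x|<2r$.} I write
\[
\frac{|x|^{2a}}{|t|^{b}}e^{-A\frac{|x|^2}{|t|}}=|x|^{2(a-b)}\rho^{b}e^{-A\rho}\le C|x|^{2(a-b)}.
\]
Since $r\le|x|<2r$, we get $|x|^{2(a-b)}\le C(a,b)r^{2(a-b)}$ whatever the sign of $a-b$. This is where the upper bound $|x|<2r$ is used; it is what allows $a>b$, unlike in Lemma \ref{global}.

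\emph{Case 2: $r^2\le|t|<4r^2$.} I write
\[
\frac{|x|^{2a}}{|t|^{b}}e^{-A\frac{|x|^2}{|t|}}=|t|^{a-b}\rho^{a}e^{-A\rho}\le C|t|^{a-b}.
\]
Since $|t|$ is comparable to $r^2$ within factor $4$, this is bounded by $C(a,b)r^{2(a-b)}$, again for either sign of $a-b$.

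Combining the two cases gives \eqref{3.6} with $C=C(a,b,A)$. There is no real obstacle here. The only point needing care is checking that both cases cover the annulus and that the two-sided comparability of $|x|$, respectively $|t|$, with the appropriate power of $r$ removes the restriction $a\le b$ imposed in Lemma \ref{global}.
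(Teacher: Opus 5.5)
Your proof is correct and follows the paper's argument exactly: you use the same case split on $|t|$ versus $r^2$, the same factorizations $|x|^{2(a-b)}\rho^{b}e^{-A\rho}$ and $|t|^{a-b}\rho^{a}e^{-A\rho}$, and the same two-sided bounds on $|x|$ or $|t|$, which handle either sign of $a-b$. Your split into $|t|<r^2$ and $|t|\ge r^2$ is slightly more careful than the paper's $|t|\le r^2$, since $|t|=r^2$ on its own does not force $|x|\ge r$.
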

    \begin{proof}
    Take $(x,t)\in  \widetilde{Q}_{2r}\backslash \widetilde{Q}_{r}$. We establish \eqref{3.6} by considering the following two cases:

        Case 1: For $|t|\leq r^{2}$ (and thus $r\leq|x|\leq2r$), we have
    	\begin{eqnarray}
    		\begin{aligned}
    			\frac{|x|^{2a}}{|t|^{b}}e^{-A\frac{|x|^{2}}{|t|}}=|x|^{2(a-b)}(\frac{|x|^{2}}{|t|})^{b}e^{-A\frac{|x|^{2}}{|t|}}
    		\leq C|x|^{2(a-b)}\leq Cr^{2(a-b)}.
    		\end{aligned}
    	\end{eqnarray}

        Case 2: For $r^{2}<|t|\leq 4r^{2}$, we obtain
    	\begin{eqnarray}
    		\begin{aligned}
    			\frac{|x|^{2a}}{|t|^{b}}e^{-A\frac{|x|^{2}}{|t|}}=|t|^{a-b}(\frac{|x|^{2}}{|t|})^{a}e^{-A\frac{|x|^{2}}{|t|}}\leq C|t|^{a-b}\leq Cr^{2(a-b)}.
    		\end{aligned}
    	\end{eqnarray}
    \end{proof}
    \begin{lemma}[Decay estimate for $S_{r}$]\label{ref-w()}
    	Let $f\in C_{1}^{k+\alpha}(0,0;1)$ with polynomial $P$, for some $k\in\N$ and $\alpha\in[0,1)$. For $S_{r}$ defined by \eqref{def-S}, there exists a positive constant $C=C(n,k,s)$ such that
    	\begin{align}
    		\bbint_{Q_{r}}|S_{r}(y,\tau)|dyd\tau\leq C\|f\|_{C_{1}^{k+\alpha}(0,0;1)}r^{k+\alpha+2s}.
    	\end{align}
    \end{lemma}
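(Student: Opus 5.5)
The plan is to use Fubini to put the average over the outer variable onto the kernel, so that everything reduces to an $L^1$ bound for $f-J$ on $Q_r$ times an integral of $K_{-s}$ over a region of parabolic size $r$. Since $S_r(y,\tau)$ only sees data in $Q_r$, for $(y,\tau)\in Q_r$ we write
\begin{align*}
\bbint_{Q_{r}}|S_{r}(y,\tau)|dyd\tau\leq \frac{1}{|Q_r|}\int_{Q_r}|(f-J)(z,\sigma)|\left(\int_{Q_r}\chi_{\{\tau>\sigma\}}K_{-s}(y-z,\tau-\sigma)\,dyd\tau\right)dzd\sigma .
\end{align*}

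\textbf{Step 1: the inner kernel integral.} For fixed $(z,\sigma)\in Q_r$ and $(y,\tau)\in Q_r$ we have $0<\tau-\sigma<r^2$ and $|y-z|<2r$. Integrating the Gaussian in $y$ over all of $\R^n$ gives $C(\tau-\sigma)^{s-1}$, with $C$ depending only on $n$ and the normalization. Integrating in $\tau$ over an interval of length at most $r^2$ then gives
\begin{align*}
\int_{Q_r}\chi_{\{\tau>\sigma\}}K_{-s}(y-z,\tau-\sigma)\,dyd\tau\leq C\int_0^{r^2}\theta^{s-1}d\theta=\frac{C}{s}r^{2s}.
\end{align*}
This integrability of $\theta^{s-1}$ near $0$ is exactly where $s>0$ enters, and it is the only delicate point: one must integrate the singular kernel in $y$ first rather than bounding it pointwise. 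Lemma~\ref{local} is not needed here.

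\textbf{Step 2: control of $f-J$ on $Q_r$.} On $Q_r\subset\widetilde Q_1$ we have $\psi\equiv1$, so $J=P$ there. Hence, with $s_1=s_2=1$,
\begin{align*}
\frac{1}{|Q_r|}\int_{Q_r}|f-J|=\bbint_{Q_r}|f-P|\leq \nu_f(r).
\end{align*}
Combining Steps 1 and 2 gives $\bbint_{Q_r}|S_r|\leq Cr^{2s}\nu_f(r)$.

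\textbf{Step 3: $\nu_f(r)\leq C\|f\|r^{k+\alpha}$.} The definition of $C_1^{k+\alpha}(0,0;1)$ is phrased as a summed condition, so we extract single terms from it. Fix $\rho\in[1/4,1/2]$ (the interval in \eqref{1.1000}). Applying \eqref{1.1000} with $m=i+1$ and keeping only the last term gives
\begin{align*}
\nu_f(\rho^i)\leq K(i+1)\rho^{i(k+\alpha)}.
\end{align*}
This carries a harmful factor $(i+1)$, so the naive extraction is not enough. I expect this step to be the main obstacle, and I would handle it with the equivalence proved in the appendix (Remark~\ref{remark 2}): the space $C_1^{k+\alpha}$ defined via \eqref{1.1000} coincides with the usual one, $\nu_f(r)\leq C\|f\|r^{k+\alpha}$ for $0<r\leq1$. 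The idea is that the bound holds for every $m$ and every $\rho$ in $[1/4,1/2]$, and the quantities $\nu_f(\rho^i)$ are monotone in the radius, so the averaged growth forces each term to be $O(\rho^{i(k+\alpha)})$. Given $r\in(0,1]$, choose $i$ and $\rho\in[1/4,1/2]$ with $r=\rho^i$ (or compare with the nearest dyadic radius using monotonicity of $\nu_f$). This yields $\nu_f(r)\leq C\|f\|_{C_1^{k+\alpha}(0,0;1)}r^{k+\alpha}$.

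Multiplying the bounds of Steps 2 and 3 gives the stated estimate $C\|f\|_{C_1^{k+\alpha}(0,0;1)}r^{k+\alpha+2s}$.
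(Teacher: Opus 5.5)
Your Steps 1 and 2 are correct, and they are shorter than the paper's route. Tonelli plus the exact identity $\int_{\R^n}K_{-s}(y,\theta)\,dy=\theta^{s-1}/|\Gamma(-s)|$ give
\[
\frac{1}{|Q_r|}\int_{Q_r}|S_r|\,dy\,d\tau\le \frac{r^{2s}}{s\,|\Gamma(-s)|}\cdot\frac{1}{|Q_r|}\int_{Q_r}|f-P|\,dz\,d\sigma .
\]
The paper instead writes $|f-P|\le Cr^{k+\alpha}|g|$ with $g=(f-P)/(|x|^2+|t|)^{(k+\alpha)/2}$, splits the kernel over parabolic dyadic shells with Lemma~\ref{local}, and bounds $\nu_g(1)$ by $\|f\|$ via Theorem~\ref{reduce 1}.

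The gap is Step 3. Your heuristic that monotonicity of $\nu_f$ plus the summed bound \eqref{1.1000} forces every term to be bounded is false when $k+\alpha>0$. Set $a_i=\rho^{-i(k+\alpha)}\nu_f(\rho^i)$. Monotonicity only yields $a_{i-1}\ge\rho^{k+\alpha}a_i$, so a large $a_i$ drags along only $O(1)$ geometrically decaying neighbours. You get nothing better than $a_i\le CK(i+1)$, which is the naive extraction. Concretely, take $P=0$ and let $f\ge 0$ carry mass $2^j d_j^{\,n+2+k+\alpha}$ spread uniformly over $Q_{d_j}\setminus Q_{d_j/2}$, with $d_j=2^{-2^j}$ and $j\ge j_0$. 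Then $R^{-(k+\alpha)}\nu_f(R)\lesssim 2^j(d_j/R)^{k+\alpha}$ for $d_j/2\le R\le d_{j-1}/2$. Hence for every $\rho\in[1/4,1/2]$ the $j$-th bump adds only $O(2^j)$ to $\sum_{i<m}a_i$, and the first $m$ terms reach only bumps with $2^j\lesssim m$. So \eqref{1.1000} holds with a finite $K$, yet $d_j^{-(k+\alpha)}\nu_f(d_j)\gtrsim 2^j\to\infty$.

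Citing the appendix equivalence (Remark~\ref{remark 2}, Theorem~\ref{reduce 1} going from (3) to (1)) therefore does not close Step 3, because that implication is exactly what this example violates. Worse, the inner kernel integral in your Step 1 is $\ge c\,r^{2s}$ whenever $|z|<r$ and $\sigma\le -r^2/2$, and a fixed fraction of the $j$-th bump lies in that part of $Q_{d_j}$. So for this $f$ one has $\frac{1}{|Q_{d_j}|}\int_{Q_{d_j}}|S_{d_j}|\gtrsim 2^j d_j^{\,k+\alpha+2s}$. Measured in the summed norm of Definition~\ref{sec:space-a}, the estimate itself is false, and no sharper extraction can rescue your Step 3.

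What the paper's proof actually uses is the usual norm. Its input $\nu_g(1)\le C\|f\|_{C_1^{k+\alpha}(0,0;1)}$ is the direction (1)$\Rightarrow$(2) of Theorem~\ref{reduce 1}, which is proved from $\frac{1}{|Q_R|}\int_{Q_R}|f-P|\le \|f\|\,R^{k+\alpha}$ for all $0<R\le 1$. Read the hypothesis in that sense and Step 3 is one line: $\nu_f(r)=\sup_{R\le r}\frac{1}{|Q_R|}\int_{Q_R}|f-P|\le \|f\|\,r^{k+\alpha}$. With that replacement your argument is a complete proof, with the explicit constant $1/(s|\Gamma(-s)|)$ and a single scale in place of the dyadic sum.
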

    \begin{proof}
        Define
    	$$g(x,t)=\frac{f(x,t)-P(x,t)}{(|x|^{2}+|t|)^{\frac{k+\alpha}{2}}},$$
    	then we obtain
    	\begin{eqnarray}
    	\begin{aligned}
    		|S_{r}(x,t)|\leq&\int_{B_{r}}\int_{-r^{2}}^{t}|f-P|(y,\tau)K_{-s}(x-y,t-\tau)dyd\tau\\
            =&\int_{B_{r}(x)}\int_{0}^{t+r^{2}}|f-P|(x-y,t-\tau)K_{-s}(y,\tau)dyd\tau\\
    		=&\int_{B_{r}(x)}\int_{0}^{t+r^{2}}(|x-y|^{2}+|t-\tau|)^{\frac{k+\alpha}{2}}|g(x-y,t-\tau)|K_{-s}(y,\tau)dyd\tau\\ 
    		\leq&Cr^{k+\alpha}\int_{B_{r}(x)}\int_{0}^{t+r^{2}}|g(x-y,t-\tau)|\frac{e^{-\frac{|y|^{2}}{4\tau}}}{|\tau|^{\frac{1}{2}(n+2)-s}}dyd\tau.
            \end{aligned}
            \end{eqnarray}
         Define
         \begin{align*}           
\nu_{g}(R):=\sup_{0<r\leq R}\bbint_{-r^{2}}^{0}\bbint_{ B_{r}}|g(x,t)|dxdt, 
\end{align*} 
        by Lemma \ref{local} with $a=0,b=n/2+2-s$ and $A=4$, we have
            \begin{eqnarray}\label{rad-1}
    	\begin{aligned}
        |S_{r}(x,t)|\leq&Cr^{k+\alpha}\sum_{i=0}^{\infty}\int_{\widetilde{Q}_{2^{1-i}r}\backslash \widetilde{Q}_{2^{-i}r}}|g(x-y,t-\tau)|\frac{e^{-\frac{|y|^{2}}{4\tau}}}{|\tau|^{\frac{1}{2}(n+2)-s}}\chi_{B_{r}(x)\times[0,t+r^{2})}(y,\tau)dyd\tau\\
 \leq&Cr^{k+\alpha}\sum_{i=0}^{\infty}\left(\frac{r}{2^{i}}\right)^{2s-n-2}\int_{\widetilde{Q}_{2^{1-i}r}\backslash \widetilde{Q}_{2^{-i}r}}|g(x-y,t-\tau)|\chi_{B_{r}(x)\times[0,t+r^{2})}(y,\tau)dyd\tau\\
            \leq& Cr^{k+\alpha+2s}\sum_{i=0}^{\infty}\frac{1}{2^{2si}}\left(\left(\frac{r}{2^{i}}\right)^{-n-2}I_{1}(x,t)\right),
    	\end{aligned}
    \end{eqnarray}
    where
    \begin{equation}
        I_{1}(x,t)=\int_{(Q_{2^{1-i}r}(x,t)\backslash Q_{2^{-i}r}(x,t))\cap Q_{1}}|g(y,\tau)|dyd\tau.
    \end{equation}
    Moreover, it holds
    \begin{eqnarray}\label{rad-2}
        \begin{aligned}
            \int_{Q_{r}}I_{1}(y,\tau)dyd\tau\leq |Q_{2^{1-i}r}|\int_{Q_{2r}\cap Q_{1}}|g(y,\tau)|dyd\tau\leq C\left(\frac{r}{2^{i}}\right)^{n+2}\left(r^{n+2}\nu_{g}(1)\right).
        \end{aligned}
    \end{eqnarray}
    By \eqref{rad-1}, \eqref{rad-2} and Theorem \ref{reduce 1}, we have
    \begin{eqnarray}
        \begin{aligned}
            \bbint_{Q_{r}}|S_{r}(y,\tau)|dyd\tau\leq Cr^{k+\alpha+2s}\nu_{g}(1)\sum_{i=0}^{\infty}\frac{1}{2^{2si}}=C\|f\|_{C_{1}^{k+\alpha}(0,0;1)}r^{k+\alpha+2s}.
        \end{aligned}
    \end{eqnarray}
   Hence, we complete the proof.
    \end{proof}
    \subsection{Estimate for $u_{P}$}
    In this subsection, we establish the pointwise regularity of $u_{P}$ by considering the following two functions:
    \begin{eqnarray}\label{def-W}
    		\begin{aligned}\label{3.0}
    			W_{P,r}(x,t)&=\int_{-\infty}^{t}\int_{\R^{n}}J_{Q^{c}_{r}}(y,\tau)K_{-s}(x-y,t-\tau)dyd\tau,\\
    			V_{P}(x,t)&=\int_{-\infty}^{t}\int_{\R^{n}}J(y,\tau)K_{-s}(x-y,t-\tau)dyd\tau=W_{P,1}(x,t)+u_{P}(x,t).
    		\end{aligned}
    \end{eqnarray}
    For simplicity, we write $W_{P}=W_{P,1}$ in the remainder of this section.
    \begin{theorem}\label{3.10}
    	For any polynomial $P$, let $u_{P}$ be defined by \eqref{def-u}.
    	Then $u_{P}\in C^{\infty}(0,0;1/2)$ and for any $k\in \N$, there exists a positive constant $C=C(n,k,s)>0$ such that
    		\begin{align}\label{3.10-1}
    			\|u_{P}\|_{C^{k}(0,0;1/2)}\leq C\|P\|_{(0,0)}.
    		\end{align}
    \end{theorem}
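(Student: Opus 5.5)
The plan is to use the splitting $u_P = V_P - W_P$ from \eqref{def-W} and show that each piece lies in $C^\infty(0,0;1/2)$ with norm controlled by $\|P\|_{(0,0)}$. Since $P\in\mathcal{P}_k$ is determined by finitely many coefficients $a_\sigma$, and $\|P\|_{(0,0)}$ is comparable to $\sum_{|\sigma|\le k}|a_\sigma|$, linearity reduces everything to a single monomial $P(y,\tau)=(y,\tau)^\sigma$. For such a monomial the cut-off product $J=P\psi$ is a fixed $C_0^\infty(\widetilde{Q}_2)$ function, so all constants below depend only on $n,k,s$ and the fixed $\psi$.

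\textbf{Step 1: $V_P$ is smooth.} Change variables $(y,\tau)\mapsto(x-y,t-\tau)$ to write
\[
V_P(x,t)=\int_0^\infty\int_{\R^n}J(x-y,t-\tau)K_{-s}(y,\tau)\,dy\,d\tau .
\]
Derivatives in $(x,t)$ then fall on $J$, so $D^\sigma V_P=\int\!\!\int (D^\sigma J)(x-y,t-\tau)K_{-s}(y,\tau)\,dy\,d\tau$. This is justified by dominated convergence, because $K_{-s}\in L^1_{loc}$: indeed $\int_{\R^n}K_{-s}(y,\tau)\,dy=c\,\tau^{s-1}$, which is integrable near $\tau=0$. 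Since $D^\sigma J$ is bounded and supported in $\widetilde{Q}_2$, for $(x,t)\in\widetilde{Q}_{1/2}$ only $\tau\in(0,t+4)\subset(0,5)$ contributes. This gives $|D^\sigma V_P|\le C\|D^\sigma J\|_\infty\int_0^5\tau^{s-1}\,d\tau\le C$ on $\widetilde{Q}_{1/2}$, for every multi-index $\sigma$.

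\textbf{Step 2: $W_P$ is smooth.} Here $W_P$ integrates the bounded, compactly supported density $J\chi_{Q_1^c}$ against $K_{-s}$, and this density vanishes on $Q_1$. For $(x,t)\in\widetilde{Q}_{1/2}$, only $\tau<t$ matters, so the relevant set is where $J\ne0$ outside $\widetilde{Q}_1$ intersected with $\{\tau<t\}$. Points of $Q_1^c$ with $\tau<t\le1/4$ lie outside $\widetilde{Q}_1$ or have $\tau\in(0,t)$ with $|y|<1$; the latter set is contained in $B_1\times(0,1/4)$, which also lies outside $Q_1=B_1\times(-1,0]$. This shows that the correct analogue here is $\widetilde{Q}_1$ versus $Q_1$, so the cleanest route is to avoid repeating the argument of Lemma~\ref{time-lemma} directly. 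Instead, I write $J_{Q_1^c}=J_{\widetilde{Q}_1^c}+J_{\widetilde{Q}_1\setminus Q_1}$. The first piece is handled by Lemma~\ref{3.7} with $f=J_{\widetilde{Q}_1^c}\in L^1$, which gives $\|D^kW\|_{L^\infty(\widetilde{Q}_{1/2})}\le C\|J\|_{L^1}\le C$. The second piece $J\chi_{B_1\times(0,1)}$ lives in the future time slab and is the delicate part. Its contribution at $(x,t)$ is $\int_0^t\int_{B_1}J(y,\tau)K_{-s}(x-y,t-\tau)\,dy\,d\tau$. This is not obviously smooth across $t=0$, but since $J=P$ is smooth on $B_1\times[0,1)$, it coincides with $V$-type terms: it equals $\int_0^t\int_{\R^n}\tilde J(y,\tau)K_{-s}(x-y,t-\tau)\,dy\,d\tau$ minus a far part, where $\tilde J$ is a smooth extension. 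I would rather sidestep this issue by noting that $u_P$ only needs pointwise regularity in $Q_{1/2}$ (backward cylinders). For $(x,t)\in Q_{1/2}$ we have $t\le0$, so the slab $\tau>0$ never contributes. Working on $Q_{1/2}$ and adapting the proof of Lemma~\ref{time-lemma} with $\widetilde{Q}_r$ replaced by $Q_r$ (the kernel estimates of Lemma~\ref{translation} and Lemma~\ref{prop2.3} only use $|y|\ge r$ or $\tau\le -r^2$, which holds on $Q_1^c\cap\{\tau<t\}$) then yields $\|D^kW_P\|_{L^\infty(Q_{1/2})}\le C$, with $C$ depending on $\|J\|_{L^1}$ via Lemma~\ref{3.7}'s estimate.

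\textbf{Step 3: conclusion.} With $u_P=V_P-W_P$ smooth on $Q_{1/2}$ and all derivatives of order at most $k+1$ bounded by $C\|P\|_{(0,0)}$, I take the Taylor polynomial of $u_P$ at $(0,0)$ of parabolic degree $k$ as the approximating polynomial. The remainder is $O(r^{k+1})$, or $O(r^{k+2})$ when the next parabolic order is reached through the $t$-direction. In either case $\nu_{u_P}(r)\le Cr^{k+1}\|P\|_{(0,0)}$, so the defining series in Definition~\ref{sec:space-a} is bounded by a geometric sum, which gives \eqref{3.10-1}. The main obstacle is Step 2: to control $W_P$ near $t=0$, I must check carefully that the future part of $\widetilde{Q}_1\setminus Q_1$ does not spoil smoothness. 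I will handle this by restricting to backward cylinders as above.
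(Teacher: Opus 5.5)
Your proposal is correct and follows the paper's route. You split $u_P=V_P-W_P$, move all derivatives onto $J=P\psi$ for $V_P$, control $W_P$ by the $L^1$ external-part estimate (Lemma \ref{3.7}, Theorem \ref{v-E2}), and read off the pointwise norm by Taylor expansion at $(0,0)$. Your version is also more careful than the paper's in two places: you restrict to $\tau\in(0,5)$ using the support of $J$, which is needed because the paper's bound by $\int_{\R^n\times\R}K_{-s}$ is actually infinite (since $\int_{\R^n}K_{-s}(y,\tau)\,dy=c\,\tau^{s-1}$), and you make explicit that on backward cylinders ($t\le 0$) the slab $\widetilde Q_1\setminus Q_1=B_1\times(0,1)$ never contributes, so $W_P$ there coincides with the $\widetilde Q_1^c$-integral covered by Lemma \ref{3.7}, and no re-adaptation of Lemma \ref{time-lemma} is required.
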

    \begin{proof}
    We consider the pointwise regularity of $W_{P}$ and $V_{P}$ as defined in \eqref{def-W} to establish \eqref{3.10-1}.

    For function $W_{P}$, since $J\in L^{1}(\R^{n}\times \R)$, by Theorem \ref{v-E2} with $f=J$, we have $W_{P}\in C^{\infty}(0,0;1/2)$, and for any $k\in\N$, 
    \begin{align}\label{4.3.9}
    			\|W_{P}\|_{C^{k}(0,0;1/2)}\leq C\|P\|_{(0,0)}. \end{align}
    Moreover, we observe that 
    	\begin{align}\label{J(x,t)}
    		\|J\|_{C^{k}(\R^{n})}\leq C\|P\|_{(0,0)},~~ \text{for any $k\in\N$,}
    	\end{align}
        since $supp(J)\subset \widetilde{Q}_{2}$.
        
    	Next, we estimate the derivatives of $V_P$. For each multi-index $\alpha$ and $m\in\N$, we have, for any $(x,t)\in \widetilde{Q}_{1}$,
    	\begin{eqnarray}
    		\begin{aligned}
    			\left|\frac{\partial^{|\alpha|+m} V_{P}}{\partial x^{\alpha}t^{m}}(x,t)\right|=&\left|\int_{0}^{\infty}\int_{\R^{n}}\frac{\partial^{|\alpha|+m} J}{\partial x^{\alpha}t^{m}}(x-y,t-\tau)K_{-s}(y,\tau)dyd\tau\right|\\
    			\leq&C\|P\|_{(0,0)}\int_{\R^{n}\times\R}K_{-s}(y,\tau)dyd\tau
    			= C\|P\|_{(0,0)}.
    		\end{aligned}
    	\end{eqnarray}
    	This establishes the uniform bound
    	\begin{align}\label{2.1-3}
    		\|D^{k}V_{P}\|_{L^{\infty}(\widetilde{Q}_{1})}\leq C\|P\|_{(0,0)},~~ \text{for any $k\in\N$,}
    	\end{align}
    	which implies $V_{P}\in C^{\infty}(\widetilde{Q}_{1})$. In particular, we obtain, for any $k\in \N$,
    \begin{align}\label{2.1-2}
    			\|V_{P}\|_{C^{k}(0,0;1/2)}\leq C\|P\|_{(0,0)}.	
    		\end{align}
            Hence, together with \eqref{def-W}, \eqref{4.3.9} and \eqref{2.1-2}, we get \eqref{3.10-1}.
        \end{proof}
    \subsection{Estimate for $T_{r}$ with $2s+\alpha\notin\Z$}
    In this subsection, we obtain the pointwise regularity of $T_{r}$ by constructing a sequence of parabolic polynomials belonging to the function space $\mathcal{P}_{k}$ defined by Definition \ref{poly}.
    \begin{lemma}\label{lemma 3.8}
    For $\alpha\in[0,1)$ with $2s+\alpha\notin\Z$, let $f\in C_{1}^{k+\alpha}(0,0;1)$($k\in\N$) be positive, and suppose that $T_{r}$ is given by \eqref{def-T}. Then for any $\eta\in[1/4,1/2]$, there exist a sequence of polynomials $P_{i}\in\mathcal{P}_{\gamma}$, where $\gamma = k + \lfloor \alpha + 2s \rfloor$, and a positive constant $C=C(n,k,\alpha,s)$ such that for every integer $i\geq 1$, 
    		\begin{align}\label{4.9-1}
    			\|T_{\eta^{i-1}}-P_i\|_{L^{\infty}(Q_{\eta^{i}})}\leq C\|f\|_{C_{1}^{k+\alpha}(0,0;1)}\eta^{i(k+\alpha+2s)}.
    		\end{align}
    	In addition, there exist constants $a_j$ ($j = 0,\ldots,\gamma$) such that for every $i\geq1$,
    		\begin{align}\label{4.9-3}
    			\sum_{j=0}^{\gamma}\eta^{j(i-1)}|D^{j}P_{i}(0,0)-a_{j}|\leq C\|f\|_{C_{1}^{k+\alpha}(0,0;1)}\eta^{i(k+\alpha+2s)}.
    		\end{align} 		
    \end{lemma}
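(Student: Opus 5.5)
The plan is to take $P_i$ to be the parabolic Taylor polynomial of degree $\gamma=k+\lfloor\alpha+2s\rfloor$ at $(0,0)$ of $T_{\eta^{i-1}}$. The point is that $T_{\eta^{i-1}}$ involves only $(f-J)$ supported in $Q_1\setminus Q_{\eta^{i-1}}$, so it is smooth in $\widetilde Q_{\eta^{i-1}/2}\supset Q_{\eta^i}$. By Taylor's theorem in parabolic form, for $(x,t)\in Q_{\eta^i}$,
\[
|T_{\eta^{i-1}}-P_i|(x,t)\le C\,\eta^{i(\gamma+1)}\sup_{Q_{\eta^i}}|D^{\gamma+1}T_{\eta^{i-1}}| ,
\]
with the top-order time derivative counted with parabolic weight. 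The whole proof then reduces to derivative bounds for $T_{\eta^{i-1}}$ on $Q_{\eta^i}$.

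To get these bounds, split $Q_1\setminus Q_{\eta^{i-1}}=\bigcup_{l=0}^{i-2}(Q_{\eta^l}\setminus Q_{\eta^{l+1}})$ and differentiate under the integral, which is justified as in Lemma \ref{time-lemma}. For $(x,t)\in Q_{\eta^i}$ and $(y,\tau)$ in the $l$-th annulus, the point $(x-y,t-\tau)$ lies at parabolic distance $\sim\eta^{l}$ from the origin. Lemma \ref{prop2.3} combined with Lemma \ref{local} (or Lemma \ref{global}) then gives $|D^jK_{-s}(x-y,t-\tau)|\le C\eta^{l(2s-n-2-j)}$. Using the definition of $C_1^{k+\alpha}$ through $\nu_{f}$ (and $J=P$ on $Q_1$), we have $\int_{Q_{\eta^l}}|f-P|\le C\eta^{l(n+2)}\nu_f(\eta^l)$. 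Hence
\[
|D^jT_{\eta^{i-1}}|\le C\sum_{l=0}^{i-2}\eta^{l(2s-j)}\nu_f(\eta^l).
\]
For $j=\gamma+1>k+\alpha+2s$ (this is where $2s+\alpha\notin\Z$ is used), write $\eta^{l(2s-j)}\nu_f(\eta^l)=\eta^{l(k+\alpha+2s-j)}\cdot\eta^{-l(k+\alpha)}\nu_f(\eta^l)$. The first factor grows geometrically in $l$ since $k+\alpha+2s-j<0$, and the weights $\eta^{-l(k+\alpha)}\nu_f(\eta^l)$ have partial sums $\le Km$. Summation by parts (Abel) shows the sum is dominated by its last terms, giving $C K\eta^{i(k+\alpha+2s-\gamma-1)}$. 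Multiplying by $\eta^{i(\gamma+1)}$ yields \eqref{4.9-1}.

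The main obstacle is that the hypothesis controls only the partial sums $\sum_{l<m}\eta^{-l(k+\alpha)}\nu_f(\eta^l)\le Km$, not individual terms uniformly. I would first try to extract $\eta^{-l(k+\alpha)}\nu_f(\eta^l)\le C K$ from the monotonicity of $\nu_f$ together with the freedom $r\in[1/4,1/2]$, via the Appendix equivalence (Theorem \ref{reduce 1}). If that fails, the Abel-summation bound above still suffices, since geometric weights against linearly growing partial sums lose only a constant.

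For \eqref{4.9-3}, note that $D^jP_i(0,0)=D^jT_{\eta^{i-1}}(0,0)$, and
\[
D^jP_{i+1}(0,0)-D^jP_i(0,0)=\int_{Q_{\eta^{i-1}}\setminus Q_{\eta^i}}(f-P)\,D^jK_{-s}(-y,-\tau)\,dy\,d\tau,
\]
which is bounded by $C\eta^{i(2s-j)}\nu_f(\eta^{i-1})$. For $j\le\gamma<k+\alpha+2s$ we have $\eta^{i(2s-j)}\nu_f(\eta^{i-1})\le CK\eta^{i(k+\alpha+2s-j)}\cdot(\text{term})$, and the exponent $k+\alpha+2s-j$ is now positive. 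So the series $a_j:=\lim_i D^jP_i(0,0)$ converges, and by the same Abel summation the tail obeys $|D^jP_i(0,0)-a_j|\le CK\eta^{i(k+\alpha+2s-j)}$. Multiplying by $\eta^{j(i-1)}$ and summing over $j$ gives \eqref{4.9-3}, up to constants depending on $\eta\in[1/4,1/2]$. The final constant is taken as $K\le\|f\|_{C_1^{k+\alpha}(0,0;1)}$, independent of $\eta$ in this range.
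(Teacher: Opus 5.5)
Your route is the paper's own: the degree-$\gamma$ parabolic Taylor polynomial of $T_{\eta^{i-1}}$, the annuli $Q_{\eta^l}\setminus Q_{\eta^{l+1}}$ with $|D^jK_{-s}|\le C\eta^{l(2s-n-2-j)}$ there, summation by parts, and a Cauchy argument for the coefficients. The kernel and Taylor steps are fine. The gap is the summation step.

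Put $b_l=\eta^{-l(k+\alpha)}\nu_f(\eta^l)\ge0$, $B_m=\sum_{l<m}b_l\le Km$ and $\theta=1-\{\alpha+2s\}\in(0,1)$. Summation by parts gives $\sum_{l=0}^{N}\eta^{-l\theta}b_l=\eta^{-N\theta}B_{N+1}-\sum_{l=0}^{N-1}(\eta^{-(l+1)\theta}-\eta^{-l\theta})B_{l+1}$. The subtracted terms are nonnegative, and using them would require a \emph{lower} bound on $B_{l+1}$. So all you get is $K(N+1)\eta^{-N\theta}$. As a statement about sequences this is sharp: $b_l=0$ for $l\neq N$ and $b_N=K(N+1)$ is admissible. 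Hence ``geometric weights against linearly growing partial sums lose only a constant'' is false; they lose a factor $N\sim i$. The same loss hits your tail bound for \eqref{4.9-3}, which only yields $CKi\,\eta^{i(k+\alpha+2s-j)}$. The paper's \eqref{4.9-100} has the same defect: it inserts the upper bound $K(l+1)$ into the negatively signed terms. So it cannot be borrowed to fill this step.

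What closes the argument is the termwise bound $b_l\le CK$, i.e.\ $\nu_f(\rho)\le CK\rho^{k+\alpha}$ for $0<\rho\le1$, which is the usual pointwise condition of the Appendix. With it your sums are plain geometric series. First, $\sum_{l\le i-2}\eta^{l(2s-\gamma-1)}\nu_f(\eta^l)\le CK\eta^{-i\theta}$. Second, the coefficient increments are $\le CK\eta^{i(k+\alpha+2s-j)}$, with $k+\alpha+2s-j\ge\{\alpha+2s\}>0$ for $j\le\gamma$. That is where $2s+\alpha\notin\Z$ is used, not in $\gamma+1>k+\alpha+2s$, which always holds. Both \eqref{4.9-1} and \eqref{4.9-3} then follow, uniformly in $\eta\in[1/4,1/2]$.

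Your first option, however, cannot extract this bound from the partial-sum hypothesis when $k+\alpha>0$. Monotonicity of $\nu_f$ only gives $b_{l-1}\ge\eta^{k+\alpha}b_l$. So one large term costs only a constant multiple of itself in every partial sum, for every $r\in[1/4,1/2]$. Concretely, take $f=1+g$ (with $P\equiv1$), where $g\ge0$ has mass about $\log(1/\rho_0)\,\rho_0^{k+\alpha}|Q_{\rho_0}|$ on a small piece of a thin shell just inside $\partial Q_{\rho_0}$. This $f$ satisfies the partial-sum condition with $K$ independent of $\rho_0$, yet $\nu_f(\rho_0)\gtrsim\rho_0^{k+\alpha}\log(1/\rho_0)$. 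With $\rho_0=\eta^{i-1-p}$ for a fixed $p$ and a generic placement of the piece, parabolic scaling gives $\|T_{\eta^{i-1}}-P_i\|_{L^\infty(Q_{\eta^i})}\ge c\,i\,\eta^{i(k+\alpha+2s)}$.

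So the implication (3)$\Rightarrow$(1) of Theorem \ref{reduce 1} fails for $k+\alpha>0$, and under the partial-sum definition the lemma is false as stated. It should be read with the hypothesis $\nu_f(\rho)\le K\rho^{k+\alpha}$. Under that reading, your proof with the Abel step replaced by the geometric sums above is complete.
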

    \begin{proof}
        We set $P_{1}(x,t)=0$. Now we fix an $i\geq 2$.

        Recall that $v_{r}$ is defined in \eqref{est-v} and $W_{P,r}$ in \eqref{def-W}. Since $$T_{r}=v_{r}-v_{1}+W_{P}-W_{P,r},$$
    	then from Lemma \ref{time-lemma}, Lemma \ref{3.7} and Lemma \ref{prop2.3}, there exists a positive constant $C=C(|\zeta|,\beta,n,s)$ such that for any multi-indices $\zeta$ and integer $\beta$, 
        \begin{eqnarray}\label{der of T_{r}}
            \begin{aligned}
    		\left|\frac{\partial^{|\zeta|+\beta} T_{r}}{\partial x^{\zeta}\partial t^{\beta}}(x,t)\right|&=\left|\int_{-\infty}^{t}\int_{\mathbb{R}^{n}}\left(f-P\right)_{Q_{1}\backslash Q_{r}}(y,\tau)\frac{\partial^{|\zeta|+\beta}K_{-s}}{\partial x^{\zeta}\partial t^{\beta}}(x-y,t-\tau)dyd\tau\right|\\
    		&\leq C\sum_{l=0}^{\beta}\sum_{j=0}^{[\frac{|\zeta|}{2}]}\int_{-\infty}^{t}\int_{\mathbb{R}^{n}}|f-P|_{Q_{1}\backslash Q_{r}}(y,\tau)\frac{|x-y|^{2l+|\zeta|-2j}e^{-\frac{|x-y|^{2}}{4(t-\tau)}}}{(t-\tau)^{\frac{n}{2}+1-s+|\zeta|+\beta+l-j}}dyd\tau,
    	\end{aligned}
        \end{eqnarray}
        for any $(x,t)\in Q_{r/2}$,
    	which implies $T_{r}\in C^{\infty}(\widetilde{Q}_{r/2})$. Hence, Taylor's expansion of $T_{r}$ with $r=\eta^{i-1}$ yields that	
    	\begin{eqnarray}\begin{aligned}\label{exp-3.22}
    		T_{\eta^{i-1}}(x,t)=P_{i}(x,t)+R_{i}(x_{\xi},t_{\xi}), \text{ for any }(x,t)\in Q_{\eta^{i}},
    	\end{aligned}\end{eqnarray}
        where 
        \begin{eqnarray}
            \begin{aligned}\label{3.23}
              P_{i}(x,t)&:=\sum_{j=0}^{\gamma}\sum_{|\sigma|=j}\frac{1}{\sigma!}\left(\frac{\partial^{\sigma} T_{\eta^{i-1}}}{\partial (x,t)^{\sigma}}(0,0)\right)\cdot(x,t)^{\sigma},\\
              R_{i}(x,t) &:=\sum_{|\sigma|=\gamma+1}\left(\frac{1}{\sigma!}\frac{\partial^{\sigma} T_{\eta^{i-1}}}{\partial (x,t)^{\sigma}}(x,t)\right)\cdot(x,t)^{\sigma},\\ 
            \end{aligned}
        \end{eqnarray}
    	and $(x_{\xi},t_{\xi})\in Q_{\eta^i}$ lies on the line segment between $(0,0)$ and $(x,t)$. 

        By \eqref{der of T_{r}}, we get
    	\begin{eqnarray}\label{4.9-4}
    	\begin{aligned}
    		|R_{i}(x_{\xi},t_{\xi})| 
    		\leq C\eta^{i(\gamma+1)}\sum_{|\zeta|+2\beta=\gamma+1}\sum_{l=0}^{\beta}\sum_{j=0}^{\lfloor\frac{|\zeta|}{2}\rfloor}I_{i,j,l,|\zeta|}, 
    	\end{aligned}
        \end{eqnarray}
        where
        \begin{align}\label{I}
        I_{i,j,l,|\zeta|}:=\int_{-\infty}^{t_{\xi}}\int_{\mathbb{R}^{n}}|f-P|_{Q_{1}\backslash Q_{\eta^{i-1}}}(y,\tau)\frac{|x_{\xi}-y|^{2l+|\zeta|-2j}e^{-\frac{|x_{\xi}-y|^{2}}{4(t_{\xi}-\tau)}}}{(t_{\xi}-\tau)^{\frac{n}{2}+1-s+|\zeta|+\beta+l-j}}dyd\tau.
        \end{align}
        By Definition \ref{sec:space-a}, we have
        \begin{align*}           
\nu_{f}(R)=\sup_{0<r\leq R}\bbint_{-r^{2}}^{0}\bbint_{ B_{r}}|f-P|(y,\tau)dyd\tau.      
\end{align*}
    	By Lemma \ref{global} with $a=b=|\zeta|/2+l-j$ and $A=8$, for $(x_{\xi},t_{\xi})\in Q_{\eta^{i}}$ and $(y,\tau)\in Q_{\eta^{i-1}}^{c}$, we obtain
    	\begin{eqnarray}\label{4.9-5}
    	\begin{aligned}
    		I_{i,j,l,|\zeta|}&=\int_{-\infty}^{t_{\xi}}\int_{\mathbb{R}^{n}}\frac{|f-P|_{Q_{1}\backslash Q_{\eta^{i-1}}}(y,\tau)e^{-\frac{|x_{\xi}-y|^{2}}{8(t_{\xi}-\tau)}}}{(t_{\xi}-\tau)^{\frac{n}{2}+1-s+\frac{|\zeta|}{2}+\beta}}\left(\frac{|x_{\xi}-y|^{2l+|\zeta|-2j}}{(t_{\xi}-\tau)^{\frac{|\zeta|}{2}+l-j}}e^{-\frac{|x_{\xi}-y|^{2}}{8(t_{\xi}-\tau)}}\right)dyd\tau\\
            &\leq C\int_{-\infty}^{t_{\xi}}\int_{\mathbb{R}^{n}}\frac{|f-P|_{Q_{1}\backslash Q_{\eta^{i-1}}}(y,\tau)e^{-\frac{|x_{\xi}-y|^{2}}{8(t_{\xi}-\tau)}}}{(t_{\xi}-\tau)^{\frac{n}{2}+1-s+\frac{|\zeta|}{2}+\beta}}dyd\tau\\
            \end{aligned}
       \end{eqnarray}
       Recall that $(x_{\xi},t_{\xi})\in Q_{\eta^{i}}$, we observe that for $(y,\tau)\in Q_{1}\backslash Q_{\eta^{i-1}}$,
       \begin{align*}
           \eta^{2i}\leq|y|^{2}+|\tau|&\leq 2(|y-x_{\xi}|^{2}+|\tau-t_{\xi}|+|x_{\xi}|^{2}+|t_{\xi}|)\\
           &\leq2(|y-x_{\xi}|^{2}+|\tau-t_{\xi}|+\eta^{2i})\\
           &\leq 3(|y-x_{\xi}|^{2}+|\tau-t_{\xi}|).
       \end{align*}
       Hence, by Lemma \ref{global} with $a=b=(k+\alpha)/2,A=16$, we have
        \begin{eqnarray}\label{4.95}
    	\begin{aligned}
            I_{i,j,l,|\zeta|}&\leq  C\sum_{j=1}^{i-1}\eta^{-j(k+\alpha)}\int_{-\infty}^{t_{\xi}}\int_{\mathbb{R}^{n}}|f-P|_{Q_{\eta^{j-1}}\backslash Q_{\eta^{j}}}(y,\tau)\frac{(|x_{\xi}-y|^{2}+|t_{\xi}-\tau|)^{\frac{k+\alpha}{2}}e^{-\frac{|x_{\xi}-y|^{2}}{8|t_{\xi}-\tau|}}}{|t_{\xi}-\tau|^{\frac{n}{2}+1-s+\frac{|\zeta|}{2}+\beta}}dyd\tau\\
    		&\leq C\sum_{j=1}^{i-1}\eta^{-j(k+\alpha)}\int_{-\infty}^{t_{\xi}}\int_{\mathbb{R}^{n}}|f-P|_{Q_{\eta^{j-1}}\backslash Q_{\eta^{j}}}(y,\tau)\frac{e^{-\frac{|x_{\xi}-y|^{2}}{16|t_{\xi}-\tau|}}}{|t_{\xi}-\tau|^{\frac{n}{2}+1+\frac{1-\{\alpha+2s\}}{2}}}dyd\tau,\\
    	\end{aligned}
       \end{eqnarray}
    	where $\{x\}=x-\lfloor x\rfloor$ denotes the fractional part. Combining (\ref{4.9-4}), (\ref{4.95}) and Lemma \ref{local} with $a=0,b=\frac{n}{2}+1+\frac{1-\{\alpha+2s\}}{2},A=16$, yields
    	\begin{eqnarray}\label{4.9-6}
    	\begin{aligned}
    		|R_{i}(x_{\xi},t_{\xi})|\leq& C\eta^{i\gamma}
    		\sum_{j=1}^{i-1}\eta^{-j(k+\alpha)}\int_{-\infty}^{t_{\xi}}\int_{\mathbb{R}^{n}}|f-P|_{Q_{\eta^{j-1}}\backslash Q_{\eta^{j}}}(y,\tau)\frac{e^{-\frac{|x_{\xi}-y|^{2}}{16|t_{\xi}-\tau|}}}{|t_{\xi}-\tau|^{\frac{n}{2}+1+\frac{1-\{\alpha+2s\}}{2}}}dyd\tau\\
            \leq& C\eta^{i\gamma}\sum_{j=1}^{i-1}\eta^{j(-1+\{\alpha+2s\})}\left(\eta^{j(-n-2)}\int_{Q_{\eta^{j-1}}\backslash Q_{\eta^{j}}}|f-P|(y,\tau)dyd\tau\right)\\
    		\leq& C\eta^{i\gamma}\sum_{j=1}^{i-1}\eta^{-j(k+\alpha+1-\{\alpha+2s\})}\nu_{f}(\eta^{j-1})
            \leq C\eta^{i\gamma}\sum_{j=0}^{i-1}\eta^{(j+1)(-1+\{\alpha+2s\})}\left(\eta^{-j(k+\alpha)}\nu_{f}(\eta^{j})\right)
            \end{aligned}.
    \end{eqnarray}
    Hence, by the Abel transformation, we get
            \begin{eqnarray}\label{4.9-100}
    	\begin{aligned}
            |R_{i}(x_{\xi},t_{\xi})|\leq &C\eta^{i\gamma}\Bigg(\eta^{i(-1+\{\alpha+2s\})}\sum_{j=0}^{i-1}\eta^{-j(k+\alpha)}\nu_{f}(\eta^{j})-\\&\sum_{l=0}^{i-2}\left(\eta^{(l+1)(-1+\{\alpha+2s\})}-\eta^{l(-1+\{\alpha+2s\})}\right)\sum_{j=0}^{l}\eta^{-j(k+\alpha)}\nu_{f}(\eta^{j})\Bigg)\\
            \leq& C\eta^{i\gamma}\|f\|_{C_{1}^{k+\alpha}(0,0;1)}\left(i\eta^{i(-1+\{\alpha+2s\})}-\sum_{j=0}^{i-1}j\left(\eta^{(j+1)(-1+\{\alpha+2s\})}-\eta^{j(-1+\{\alpha+2s\})}\right)\right)\\
            =& C\eta^{i(k+\alpha+2s))}\|f\|_{C_{1}^{k+\alpha}(0,0;1)}\sum_{j=0}^{i-1}\eta^{j(1-\{\alpha+2s\})}
    		\leq C\eta^{i(k+\alpha+2s)}\|f\|_{C_{1}^{k+\alpha}(0,0;1)}.
    	\end{aligned}
    \end{eqnarray}
    Therefore, combining \eqref{exp-3.22}, \eqref{4.9-4} and \eqref{4.9-100}, we obtain \eqref{4.9-1}.

        Finally, we claim that for $0\leq j\leq\gamma$,
    	$$\eta^{ji}|D^{j}P_{i+1}(0,0)-D^{j}P_{i}(0,0)|\leq C\|f\|_{C_{1}^{k+\alpha}(0,0;1)}\eta^{i(k+\alpha+2s)}.$$
    	Indeed, by Lemma \ref{time-lemma}, Lemma \ref{3.7} and Lemma \ref{prop2.3}, we have
        \begin{eqnarray}
            \begin{aligned}
    		&\left|\frac{\partial^{|\zeta|+\beta} T_{\eta^{i-1}}}{\partial x^{\zeta}\partial t^{\beta}}(0,0)-\frac{\partial^{|\zeta|+\beta} T_{\eta^{i}}}{\partial x^{\zeta}\partial t^{\beta}}(0,0)\right|\\
            &=\left|\int_{-\infty}^{0}\int_{\mathbb{R}^{n}}\left(f-P\right)_{Q_{\eta^{i-1}}\backslash Q_{\eta^{i}}}(y,\tau)\frac{\partial^{|\zeta|+\beta}K_{-s}}{\partial x^{\zeta}\partial t^{\beta}}(-y,-\tau)dyd\tau\right|\\
    		&\leq C\sum_{i=0}^{\beta}\sum_{j=0}^{[\frac{|\zeta|}{2}]}\int_{-\infty}^{0}\int_{\mathbb{R}^{n}}|f-P|_{Q_{\eta^{i-1}}\backslash Q_{\eta^{i}}}(y,\tau)\frac{|y|^{2i+|\zeta|-2j}e^{-\frac{|y|^{2}}{4|\tau|}}}{|\tau|^{\frac{n}{2}+1-s+|\zeta|+\beta+i-j}}dyd\tau.
    	\end{aligned}
        \end{eqnarray}
        Similar to the discussion of $I_{i,j,l,|\zeta|}$, we have
    	\begin{eqnarray}\label{4.9-9}
    	\begin{aligned}
    		&\eta^{ji}|D^{j}P_{i+1}(0,0)-D^{j}P_{i}(0,0)|\\
    		\leq& \eta^{i(\gamma+1)}
    		\sum_{|\zeta|+2\beta=j}\sum_{m=0}^{\beta}\sum_{l=0}^{[\frac{|\zeta|}{2}]}C\int_{-\infty}^{0}\int_{\mathbb{R}^{n}}|f-P|_{Q_{\eta^{i-1}}\backslash Q_{\eta^{i}}}(y,\tau)\frac{|y|^{2m+|\zeta|-2l}e^{-\frac{|y|^{2}}{4|\tau|}}}{|\tau|^{\frac{n}{2}+1-s+|\zeta|+\beta+m-l}}dyd\tau\\
    		\leq& C\|f\|_{C_{1}^{k+\alpha}(0,0;1)}\eta^{i(k+\alpha+2s)}.
    	\end{aligned}
        \end{eqnarray}
        Hence, for each $0\leq j\leq\gamma$, the sequence $\{D^j P_l(0,0)\}_{l=1}^\infty$ is Cauchy and converges to some constants $a_{j}$. 
        Thus, we easily get estimate (\ref{4.9-3}), completing the proof.
    \end{proof}
\subsection{Estimate for $T_{r}$ with $2s+\alpha\in\Z$}
    In this subsection, we first give the estimate for $T_{r}$ with $2s+\alpha\in\Z$ and subsequently establish the pointwise regularity for $w_{1}$.  
    \begin{lemma}\label{lemma 3.10}
        For $\alpha\in[0,1)$ with $2s+\alpha\in\Z$, let $f\in C_{1}^{k+\alpha}(0,0;1)$($k\in\N$) be positive, and $T_{r}$ be defined by \eqref{def-T}. Then for any $\eta\in[1/4,1/2]$, there exist a sequence of polynomials $P_{i}\in\mathcal{P}_{\gamma}$, where $\gamma = k + \alpha + 2s$, and a positive constant $C=C(n,k,\alpha,s)$ such that for every integer $i\geq 1$,
    	\begin{enumerate}
    		\item[(1)] if $\gamma$ is odd, then for any $(x,t)\in Q_{\eta^{i}}$,
    	\begin{align}
    		|T_{\eta^{i-1}}(x,t)-P_i(x,t)|\leq Ci\|f\|_{C_{1}^{k+\alpha}(0,0;1)}\eta^{i(k+\alpha+2s-1)}|x|.
    	\end{align}
    	In addition, there exist constants $a_{j} ~(j=0,1,\cdots,\gamma-1)$ such that for all $i\geq1$,
    	\begin{align}
    		\sum_{j=0}^{\gamma}\eta^{j(i-1)}|D^{j}P_{i}(0,0)-a_{j}|\leq C\|f\|_{C_{1}^{k+\alpha}(0,0;1)}\eta^{i(k+\alpha+2s)};
    	\end{align}
        \item[(2)] if $\gamma$ is even, then
        \begin{align}
        	\|T_{\eta^{i-1}}(x,t)-P_i(x,t)\|_{L^{\infty}(Q_{\eta^{i}})}\leq Ci\|f\|_{C_{1}^{k+\alpha}(0,0;1)}\eta^{i(k+\alpha+2s)}.
        \end{align}
        In addition, there exist constants $a_{j} ~(j=0,1,\cdots,\gamma-1)$ such that for all $i\geq1$,
        \begin{align}
        	\sum_{j=0}^{\gamma}\eta^{j(i-1)}|D^{j}P_{i}(0,0)-a_{j}|\leq C\|f\|_{C_{1}^{k+\alpha}(0,0;1)}\eta^{i(k+\alpha+2s)}.
        \end{align}
        \end{enumerate}
    Moreover, assume $f\in C_{1}^{k+\alpha,Dini}(0,0;1)$, then
    \begin{align}
    	\|T_{\eta^{i-1}}(x,t)-P_i(x,t)\|_{L^{\infty}(Q_{\eta^{i}})}\leq C\|f\|_{C_{1}^{k+\alpha}(0,0;1)}\eta^{i(k+\alpha+2s)}.
    \end{align}
    In addition, there exist constants $a_{j} ~(j=0,1,\cdots,\gamma)$ such that for all $i\geq1$,
    \begin{align}
    	\sum_{j=0}^{\gamma}\eta^{j(i-1)}|D^{j}P_{i}(0,0)-a_{j}|\leq C\|f\|_{C_{1}^{k+\alpha}(0,0;1)}\eta^{i(k+\alpha+2s)},
    \end{align}
    \end{lemma}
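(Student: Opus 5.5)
I will mimic the proof of Lemma \ref{lemma 3.8}. The only change is that now $\{\alpha+2s\}=0$, so $\gamma=k+\alpha+2s$ is an integer and the geometric factor $\eta^{j(1-\{\alpha+2s\})}$ no longer decays. As before, set $P_1=0$. For $i\geq2$, let $P_i$ be a Taylor polynomial of $T_{\eta^{i-1}}$ at $(0,0)$; this is legitimate because $T_r=v_r-v_1+W_P-W_{P,r}$ is smooth in $\widetilde Q_{r/2}$ by Lemma \ref{time-lemma} and Lemma \ref{3.7}, with derivatives given by \eqref{der of T_{r}}. The degree of $P_i$ depends on the case.

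\emph{Case $\gamma$ even.} Expand to order $\gamma$, with parabolic remainder $R_i$ of order $\gamma+1$. Bound $R_i$ exactly as in \eqref{4.9-4}--\eqref{4.9-6}, using Lemma \ref{global}, the comparability $|y|^2+|\tau|\le 3(|y-x_\xi|^2+|\tau-t_\xi|)$, and Lemma \ref{local}. Since $\{\alpha+2s\}=0$, the outcome is
\[
|R_i|\le C\eta^{i(\gamma+1)}\sum_{j=0}^{i-1}\eta^{-(j+1)}\eta^{-j(k+\alpha)}\nu_f(\eta^j).
\]
Apply the Abel summation of \eqref{4.9-100} together with the partial-sum bound $\sum_{j<m}\eta^{-j(k+\alpha)}\nu_f(\eta^j)\le Km$. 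Each resulting term is of size $\eta^{i\gamma}\cdot O(1)$ per dyadic level, and summing over the $i$ levels gives the factor $i$. In the Dini case the partial sums are bounded by $K$ uniformly, so the factor $i$ disappears.

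The main subtlety here is parity. A time derivative counts as order $2$, so when $\gamma$ is even the top-order terms with $|\sigma|=\gamma$ include pure time monomials, and the remainder is genuinely of order $\gamma+1$ only through $x$-derivatives. I would organize the remainder by whether $|\zeta|\geq1$. This is also where the odd case differs.

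\emph{Case $\gamma$ odd.} The order-$(\gamma+1)$ remainder (from $\gamma+1$ even) contains a pure-time term $\partial_t^{(\gamma+1)/2}T$. That term scales like $\eta^{i(\gamma+1)}\cdot\eta^{-i}\cdot i$, which carries no $|x|$ factor. So instead I take $P_i$ of order $\gamma$ and treat the mixed remainder differently. By the mean value theorem, $T(x,t)-T(0,t)$ carries an explicit $|x|$, and $T(0,t)$ is expanded in $t$ alone. For $\gamma$ odd, the time expansion to order $(\gamma+1)/2$ in $t$ (parabolic degree $\gamma+1$) leaves a remainder of order $|t|^{(\gamma+1)/2}\cdot\eta^{-i}$ relative terms. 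This is of size $\eta^{i(\gamma+1)}$ times a convergent sum, hence bounded by $C\eta^{i\gamma}\cdot\eta^{i}$ without the factor $i$. The difference in $x$ then gives $Ci\,\eta^{i(\gamma-1)}|x|$ from the $\gamma$-th order $x$-derivative bound $\sim i\eta^{-i}\eta^{i\gamma}$. I expect this step to be the main obstacle: tracking exactly which derivatives produce the logarithmic loss $i$. Via Lemma \ref{prop2.3}, the loss comes only from derivative orders equal to $\gamma$, and orders greater than $\gamma$ are absorbed into the geometric sums. Only odd-order spatial derivatives give the non-removable loss, which produces the $|x|$ factor.

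\emph{Coefficients.} As in \eqref{4.9-9}, the differences $D^jT_{\eta^{i-1}}(0,0)-D^jT_{\eta^i}(0,0)$ involve only $f-P$ on the annulus $Q_{\eta^{i-1}}\setminus Q_{\eta^i}$. They are bounded by $C\eta^{-ij}\eta^{i(k+\alpha+2s)}\eta^{-i(k+\alpha)}\nu_f(\eta^{i-1})$. For $j<\gamma$ this is summable geometrically, so $D^jP_i(0,0)\to a_j$ with the stated rate. For $j=\gamma$ only partial sums of $\eta^{-i(k+\alpha)}\nu_f$ appear. These are bounded by $Ki$ in general, which is why the convergence to $a_j$ is claimed only for $j\le\gamma-1$, and by $K$ in the Dini case, which then yields $a_\gamma$ as well.
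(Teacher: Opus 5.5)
Your overall scheme (Taylor-expand $T_{\eta^{i-1}}$ at the origin, bound its derivatives through the kernel-derivative estimates and the dyadic annuli, and obtain the $a_j$ from the annular differences) is the paper's. However, the odd case has a gap that cannot be closed.

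You are right that for odd $\gamma$ the expansion in $t$ leaves the pure-time term $\frac{1}{m!}\partial_t^{m}T_{\eta^{i-1}}(\cdot,t_\xi)\,t^{m}$, $m=\frac{\gamma+1}{2}$, which has parabolic order $\gamma+1$ and no factor of $x$. This is exactly the term the paper's proof loses. The paper stops at order $\gamma-1$ and writes the remainder as $\sum_{|\sigma|=\gamma}\frac{1}{\sigma!}\partial^\sigma T_{\eta^{i-1}}(x_\xi,t_\xi)(x,t)^\sigma$. A remainder made only of monomials of exact parabolic order $\gamma$ exists for even $\gamma$, but not for odd $\gamma$.

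Your splitting $T(x,t)=[T(x,t)-T(0,t)]+T(0,t)$ does not remove this term; it only moves it into the expansion of $T(0,\cdot)$. What remains there is independent of $x$ and generically nonzero. The estimates give only $C\eta^{i\gamma}\sum_{j<i}\eta^{-j(k+\alpha)}\nu_f(\eta^j)$ for it. It is not ``$\eta^{i(\gamma+1)}$ times a convergent sum'': the weights $\eta^{-(j+1)}$ grow, so that sum is dominated by its last terms. In any case, no quantity $Ci\,\eta^{i(\gamma-1)}|x|$ dominates it near $x=0$.

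In fact the estimate in (1) cannot hold as written. At $x=0$ it forces $T_{\eta^{i-1}}(0,t)=P_i(0,t)$ for all $t\in(-\eta^{2i},0]$. Take $f=1+g$ and $P=1$, with $0\le g\not\equiv 0$ smooth and supported in $\{\frac12<|y|<1,\ -1<\tau<-\frac18\}$. Then $f\in C_1^{k+\alpha}(0,0;1)$. For $i\ge 2$ and $t>-\frac18$ one has $T_{\eta^{i-1}}(0,t)=\int g(y,\tau)K_{-s}(-y,t-\tau)\,dy\,d\tau$. This is a positive real-analytic function on $(-\frac18,\infty)$ tending to $0$ as $t\to\infty$, so by the identity theorem it agrees with no polynomial on any interval.

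What is provable for odd $\gamma$, by your argument or by the paper's with the missing term restored, is the case (2) bound $\|T_{\eta^{i-1}}-P_i\|_{L^\infty(Q_{\eta^i})}\le Ci\|f\|_{C_1^{k+\alpha}(0,0;1)}\eta^{i\gamma}$. Nothing is lost later: $\ln(1/r)\le\ln(1/|x|)$ on $Q_r$, so a $\ln$-type bound already gives the $x$-$\ln$ one.

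A second, fixable, point concerns the degree of $P_i$. In your even case (and in your odd case) $P_i$ contains the degree-$\gamma$ Taylor terms. But, as you note yourself, the order-$\gamma$ derivatives are the critical ones. $D^\gamma T_{\eta^{i-1}}(0,0)$ is a sum over $i-1$ dyadic annuli with no geometric decay, and it can genuinely grow like $i\|f\|$. So for your $P_i$ the $j=\gamma$ term $\eta^{\gamma(i-1)}|D^\gamma P_i(0,0)-a_\gamma|$ of the stated coefficient estimate is not $O(\eta^{i\gamma})$. You get around this by truncating that sum at $\gamma-1$, which is not what is claimed.

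The paper instead expands only to order $\gamma-1$ in cases (1)--(2). Then $D^\gamma P_i=0$ (take $a_\gamma=0$) and the critical derivatives sit in the remainder. There the kernel exponent is exactly $\frac n2+1$ and $I_{i,j,l,|\zeta|}\le C\sum_{j=0}^{i-1}\eta^{-j(k+\alpha)}\nu_f(\eta^j)$, which is $\le Ci\|f\|$ in general and $\le C\|f\|$ under the Dini condition. That is where the factor $i$ comes from, not from the supercritical order-$(\gamma+1)$ remainder you estimate.

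To repair your even case, move the degree-$\gamma$ terms of your $P_i$ into the remainder; on $Q_{\eta^i}$ they are $\le Ci\|f\|\eta^{i\gamma}$. This turns it into the paper's argument. Your Dini case is fine as it stands.
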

    \begin{proof}
        The proof is similar to that of Lemma \ref{lemma 3.8}; therefore, we only indicate the necessary modifications.
        
        First, for cases (1) and (2), we replace \eqref{3.23} with the following definitions:
        \begin{eqnarray}
            \begin{aligned}
              P_{i}(x,t)&:=\sum_{j=0}^{\gamma-1}\sum_{|\sigma|=j}\frac{1}{\sigma!}\left(\frac{\partial^{\sigma} T_{\eta^{i-1}}}{\partial (x,t)^{\sigma}}(0,0)\right)\cdot(x,t)^{\sigma},\\
              R_{i}(x,t) &:=\sum_{|\sigma|=\gamma}\left(\frac{1}{\sigma!}\frac{\partial^{\sigma} T_{\eta^{i-1}}}{\partial (x,t)^{\sigma}}(x,t)\right)\cdot(x,t)^{\sigma},\\ 
            \end{aligned}
        \end{eqnarray}
        In case (1), since $\gamma$ is odd, for any multi-index $\sigma$ with $|\sigma|=\gamma$ satisfying
    $\sum_{j=1}^{n}\sigma_{j}+2\sigma_{n+1}=\gamma,$
     there exists some $1\leq j\leq n$ such that $\sigma_{j}\neq 0$. Consequently, \eqref{4.9-4}  is modified to
        \begin{align}
    				|R_{i}(x_{\xi},t_{\xi})| &=\left|\sum_{|\sigma|=\gamma}\frac{1}{\sigma!}\left(\frac{\partial^{\sigma} T_{\eta^{i-1}}}{\partial (x,t)^{\sigma}}(x_{\xi},t_{\xi})\right)\cdot(x_{\xi},t_{\xi})^{\sigma}\right|  \notag\\
    				&\leq C\eta^{i(k+\alpha+2s-1)}|x|\sum_{|\zeta|+2\beta=\gamma}\sum_{l=0}^{\beta}\sum_{j=0}^{[\frac{|\zeta|}{2}]}I_{i,j,l,|\zeta|},
    			\end{align}
                where $I_{i,j,l,|\zeta|}$ is given by \eqref{I}.  	

       Next, for all cases, we refine \eqref{4.95} and \eqref{4.9-6} by the estimate 
        \begin{eqnarray}
    	\begin{aligned}
            I_{i,j,l,|\zeta|}\leq&  C\sum_{j=1}^{i-1}\eta^{-j(k+\alpha)}\int_{-\infty}^{t_{\xi}}\int_{\mathbb{R}^{n}}|f-P|_{Q_{\eta^{j-1}}\backslash Q_{\eta^{j}}}(y,\tau)\frac{(|x_{\xi}-y|^{2}+|t_{\xi}-\tau|)^{\frac{k+\alpha}{2}}e^{-\frac{|x_{\xi}-y|^{2}}{8|t_{\xi}-\tau|}}}{|t_{\xi}-\tau|^{\frac{n}{2}+1-s+\frac{|\zeta|}{2}+\beta}}dyd\tau\\
    		\leq &C\sum_{j=1}^{i-1}\eta^{-j(k+\alpha)}\int_{-\infty}^{t_{\xi}}\int_{\mathbb{R}^{n}}|f-P|_{Q_{\eta^{j-1}}\backslash Q_{\eta^{j}}}(y,\tau)\frac{e^{-\frac{|x_{\xi}-y|^{2}}{16|t_{\xi}-\tau|}}}{|t_{\xi}-\tau|^{\frac{n}{2}+1}}dyd\tau\\
            \leq& C\sum_{j=1}^{i-1}\eta^{-j(k+\alpha+n+2)}\int_{Q_{\eta^{j-1}}\backslash Q_{\eta^{j}}}|f-P|(y,\tau)dyd\tau\\
    		\leq& C\sum_{j=1}^{i-1}\eta^{-(j-1)(k+\alpha)}\nu_{f}(\eta^{j-1})
            \leq C\sum_{j=0}^{i-1}\eta^{-j(k+\alpha)}\nu_{f}(\eta^{j}),
    	\end{aligned}
    \end{eqnarray}
    which leads to
    \begin{eqnarray}
        \begin{aligned}
            I_{i,j,l,|\zeta|}\leq
            \begin{cases}
                Ci\|f\|_{C_{1}^{k+\alpha}(0,0;1)}&~~\text{ when } f\in C_{1}^{k+\alpha}(0,0;1);\\
                C\|f\|_{C_{1}^{k+\alpha,Dini}(0,0;1)}&~~\text{ when } f\in C_{1}^{k+\alpha,Dini}(0,0;1).
            \end{cases}
        \end{aligned}
    \end{eqnarray}
        Thus, we complete the proof.
    \end{proof}
    Finally, combining Lemma \ref{lemma 3.8} and Lemma \ref{lemma 3.10}, we obtain Proposition \ref{w(x,t)}.
    \begin{proof}[Proof of Proposition \ref{w(x,t)}]
        We prove only case (a); the remaining cases follow by analogous arguments. The notation is the same as in the proof of Lemma \ref{lemma 3.8}.

         For a fixed $\eta\in[1/4,1/2]$, we define 
         \begin{eqnarray}\label{w-10}
         \begin{aligned}
             \widetilde{P}(x,t)=\sum\limits_{j=0}^{\gamma}\sum\limits_{|\sigma|=j}\frac{1}{\sigma!}a_{\sigma}(x,t)^{\sigma},
         \end{aligned}
         \end{eqnarray} where $a_{\sigma}$ is the component of $a_{|\sigma|}$.
        
        For any $(x,t)\in Q_{1/2}$, choose $i\in\N$ such that
    	$$\eta^{i+1}<|(x,t)|\leq\eta^{i}.$$
    	We denote $r=\eta^{i}$ and write
        $$w_{1}=S_{r}+T_{r}+u_{P},$$
        where $S_{r},T_{r}$ and $u_{P}$ are defined in \eqref{def-S}, \eqref{def-T} and \eqref{def-u}, respectively.
        
        Taking $P=\widetilde{P}+\bar{P}$, we get
    	\begin{eqnarray}
    	\begin{aligned}\label{w-2}
    	|w_{1}(x,t)-P(x,t)|\leq|T_{r}(x,t)-\widetilde{P}(x,t)|+|S_{r}(x,t)|+|u_{P}(x,t)-\bar{P}(x,t)|.
    	\end{aligned}
        \end{eqnarray}
    	First, from Lemma \ref{lemma 3.8}, we have
    	\begin{eqnarray}
    	\begin{aligned}\label{w-4}
    		|T_{r}(x,t)-\widetilde{P}(x,t)|&\leq |T_{r}(x,t)-P_{i+1}(x,t)|+|P_{i+1}(x,t)-\widetilde{P}(x,t)|\\
    		&\leq|T_{r}(x,t)-P_{i+1}(x,t)|+\sum_{j=0}^{\gamma}r^{j}|D^{j}P_{i+1}(0,0)-a_{j}|\\
    		&\leq C\|f\|_{C_{1}^{k+\alpha}(0,0;1)}r^{k+\alpha+2s}\leq C\|f\|_{C_{1}^{k+\alpha}(0,0;1)}|(x,t)|^{k+\alpha+2s}.
    	\end{aligned}
         \end{eqnarray}
        Next, Lemma \ref{ref-w()} gives
    	\begin{eqnarray}\label{w-3}
    	\begin{aligned}
    		\bbint_{Q_{r}}|S_{r}(y,\tau)|dyd\tau\leq C\|f\|_{C_{1}^{k+\alpha}(0,0;1)}r^{k+\alpha+2s}
    		\leq C\|f\|_{C_{1}^{k+\alpha}(0,0;1)}|(x,t)|^{k+\alpha+2s}.
    	\end{aligned}
        \end{eqnarray}
        Moreover, by Theorem \ref{3.10}, we obtain $u_{P}\in C^{k+\alpha+2s}(0,0;1/2).$ Consequently, there exist a polynomial $\bar{P}$ and a positive constant $C$ such that
        \begin{eqnarray}\label{w-1}
    	\begin{aligned}\|u_{P}-\bar{P}\|_{L^{\infty}(Q_{r})}+\|\bar{P}\|_{(0,0)}r^{k+\alpha+2s}\leq C\|f\|_{C_{1}^{k+\alpha}(0,0;1)}r^{k+\alpha+2s}.
        \end{aligned}
        \end{eqnarray}
        Combining (\ref{w-2}-\ref{w-1}), we obtain for any $r\in(0,1/2]$,
        \begin{eqnarray}\label{w-7}
        \begin{aligned}
            \bbint_{Q_{r}}|w_{1}(y,\tau)-P(y,\tau)|dyd\tau\leq C\|f\|_{C_{1}^{k+\alpha}(0,0;1)}r^{k+\alpha+2s}.
        \end{aligned}
        \end{eqnarray}
        
        We recall \eqref{w-10} and note that taking $i=1$ and $P_{1}=0$ in  \eqref{4.9-3} yields
    	\begin{eqnarray}\label{w-5}
        \begin{aligned}
\|\widetilde{P}\|_{(0,0)}\leq\sum_{j=0}^{\gamma}|a_{j}|\leq C\|f\|_{C_{1}^{k+\alpha}(0,0;1)}.
        \end{aligned}
        \end{eqnarray}
        Together with \eqref{w-1} and \eqref{w-5}, we obtain
        \begin{eqnarray}\label{w-8}
        \begin{aligned}
            \|P\|_{(0,0)}\leq \|\widetilde{P}\|_{(0,0)}+\|\bar{P}\|_{(0,0)}\leq C\|f\|_{C_{1}^{k+\alpha}(0,0;1)}.
        \end{aligned}
        \end{eqnarray}
         Thus, \eqref{w-7} and \eqref{w-8} imply \eqref{w-6}.
    \end{proof}
    Finally, we prove our main result.
    \begin{proof}[Proof of Theorem \ref{th6}]
       Without loss of generality, we assume $(x_{0},t_{0})=(0,0)$. We provide the proof for case (a) only, as the other cases can be handled similarly.
       
       Since $u=v_{1}+w_{1}$, where $v_{1}$ is defined by \eqref{est-v} and $w_{1}$ by \eqref{1.25}, we can apply Proposition \ref{v-E} to obtain that $v_{1}\in C^{\infty}(0,0;\frac{1}{2})$, and
       \begin{equation}\label{th6-v1}
           \|v_{1}\|_{C^{k+\alpha+2s}(0,0;\frac{1}{2})}\leq C\|u\|_{L^{\infty}(\widetilde{Q}_{\frac{1+\sqrt{n}}{2\sqrt{n}}})}.
       \end{equation}
       Furthermore, by Proposition \ref{w(x,t)}, we have $w_{1}\in C^{k+\alpha+2s}(0,0;\frac{1}{2})$, and
       \begin{equation}\label{th6-w1}
           \|w_{1}\|_{C_{1}^{k+\alpha+2s}(0,0;\frac{1}{2})}\leq C\|f\|_{C_{1}^{k+\alpha}(0,0;1)}.
       \end{equation}
       Combining \eqref{th6-v1} and \eqref{th6-w1} yields $u\in C_{1}^{k+\alpha+2s}(0,0;\frac{1}{2})$ and leads to the estimate \eqref{th6-u}.
    \end{proof}
    
    \section{Appendix: Equivalent Characterizations of Pointwise Function Spaces}\label{S4}
   In this section, we establish some equivalent characterizations of the pointwise function spaces. 
   We first give the usual definition of pointwise function spaces (\cite{L. Wang-1,L. Wang-2}).
    \begin{definition}[Pointwise function spaces]
    Let $U\subset\mathbb{R}^{n+1}$ be a bounded domain, let $(x_{0},t_{0})\in U$, $r_{0}>0$ with $Q_{r_{0}}(x_{0},t_{0})\subset U$, and let $f:U\to\mathbb{R}$ be a function. 
    For some $1\leq s_{1},s_{2}<\infty, k\in \N$ and weight function $\mu=\mu_{x_{0},t_{0}}(x,t):\R^{n}\times\R\to[0,\infty)$, we say that $f\in C_{s_{1},s_{2}}^{k,\mu(x,t)}(x_{0},t_{0};r_{0})$, if there exist a constant $K=K_{x_{0},t_{0}}>0$ and a polynomial $P=P_{x_{0},t_{0}}\in\mathcal{P}_{k}$ such that for all $(x_{1},t_{1})\in Q_{r_{0}}(x_{0},t_{0})$, it holds
    \begin{eqnarray}\label{2.1000}
    	\begin{aligned}
    		\left(\bbint_{t_{0}-r^{2}}^{t_{0}}
            \left(\bbint_{ B_{r}(x_{0})}|f(x,t)-P(x,t)|^{s_{1}}dx\right)^{\frac{s_{2}}{s_{1}}}dt\right)^{\frac{1}{s_{2}}}\leq K\mu(x_{1}-x_{0},t_{1}-t_{0}),
    	\end{aligned}
    \end{eqnarray}
    where $r=(|x_{1}-x_{0}|^{2}+|t_{1}-t_{0}|)^{\frac{1}{2}}$. In addition, if $s_{1}=\infty$, we replace the average integral with respect to $x$ in (\ref{2.1000}) into $\mathop{\esssup}_{B_{r}(x_{0})}$. Similarly, if $s_{2}=\infty$, we replace the average integral with respect to $t$ of (\ref{2.1000}) into $\mathop{\esssup}_{(t_{0}-r^{2},t_{0}]}$.

    The associated norm is defined as
    \begin{align}
    	\|f\|_{C_{s_{1},s_{2}}^{k,\mu(x,t)}(x_{0},t_{0};r_{0})}=\min_{K\geq0, P\in\mathcal{P}_{k}} (K+\|P\|_{x_{0},t_{0}}).
    \end{align}
    Note that if for $P=0$, there exists $K$ such that \eqref{2.1000} holds for all $0<r\leq r_{0}$, then we have $f\in C_{s_{1},s_{2}}^{k,\mu(x,t)}(x_{0},t_{0};r_{0})$ with polynomial $P=0$.

    In particular, we distinguish three important cases:
    \begin{enumerate}
    	\item[(1)] for $k\in\N, 0\leq \alpha<1$ and
    	$\mu_{1}(x,t)=(|x|^{2}+|t|)^{\frac{k+\alpha}{2}}$,
    we define $$ C_{s_{1},s_{2}}^{k,\alpha}(x_{0},t_{0};r_0)=C_{s_{1},s_{2}}^{k,\mu_{1}(x,t)}(x_{0},t_{0};r_{0});$$
    \item[(2)] for $k\in\N^{*}$ and
   $
    	\mu_{2}(x,t)=(|x|^{2}+|t|)^{\frac{k}{2}}|\ln (|x|^{2}+|t|)|,
    $
    we define $$ C_{s_{1},s_{2}}^{k,\ln}(x_{0},t_{0};r_{0})=C_{s_{1},s_{2}}^{k,\mu_{2}(x,t)}(x_{0},t_{0};r_{0});$$
    
    \item[(3)] for $k\in\N, 0\leq \alpha<1$ and 
    \begin{equation}\label{mu-3}
        \mu_{3}(x,t)=\nu_{f}(R):=\sup_{0<r\leq R}\left(\bbint_{t_{0}-r^{2}}^{t_{0}}\left(\bbint_{ B_{r}(x_{0})}|f(x,t)-P(x,t)|^{s_{1}}dx\right)^{\frac{s_{2}}{s_{1}}}dt\right)^{\frac{1}{s_{2}}},
    \end{equation}
    with $R=(|x|^{2}+|t|)^{\frac{1}{2}},$ we say that $f\in C_{s_{1},s_{2}}^{k,\alpha,Dini}(x_{0},t_{0};r_{0})$ if
    	\begin{align}
    		\int_{0}^{r_{0}}\frac{\nu_{f}(r)}{r^{k+\alpha+1}}dr+\nu_{f}(r_{0})<\infty.
    	\end{align}
    	In this case, we use another associated norm
    	\begin{align}
    		\|f\|_{C_{s_{1},s_{2}}^{k,\alpha,Dini}(x_{0},t_{0};r_{0})}=\nu_{f}(r_{0})+\int_{0}^{r_{0}}\frac{\nu_{f}(r)}{r^{k+\alpha+1}}dr+\|P\|_{(x_{0},t_{0})}.
    	\end{align}
    \end{enumerate}
    \end{definition}
    For simplicity, throughout this section we denote $C^{k,\alpha}_{\infty,\infty}(x_{0},t_{0})$ by $C^{k,\alpha}(x_{0},t_{0}),$  $C^{k,\alpha}_{s_{1},s_{1}}(x_{0},t_{0})$ by $C^{k,\alpha}_{s_{1}}(x_{0},t_{0})$ and $C^{k,\alpha}_{s_{1},s_{2}}(x_{0},t_{0})$ by $C^{k+\alpha}_{s_{1},s_{2}}(x_{0},t_{0})$ with analogous conventions for the other pointwise function spaces. We consider the case $s_{1}=s_{2}=1,r_{0}=1$. Now we are ready to give some equivalent characterizations for the above pointwise function spaces. 
    
    We first establish the equivalent characterizations for $C_{1}^{k+\alpha}(x_{0},t_{0};1)$.
    \begin{theorem}
    \label{reduce 1}
    	Let $f$ be a function and $P$ be a polynomial. For some $k\in\N$, $\alpha\in[0,1)$ and $(x_{0},t_{0})\in\R^{n}\times\R$, we define
    	\begin{equation}\label{C^k-1}
    		g(x,t)=\frac{f(x,t)-P(x,t)}{(|x-x_{0}|^{2}+|t-t_{0}|)^{\frac{k+\alpha}{2}}}
    	\end{equation}
        and let $\nu_{f}$ be given by \eqref{mu-3}.
    	Then the following statements are equivalent:
    	\begin{enumerate}
    		\item[(1)]$f\in C_{1}^{k+\alpha}(x_{0},t_{0};1)$ with polynomial $P;$
    		\item[(2)]$g\in C_{1}^{0}(x_{0},t_{0};1)$ with polynomial $0;$
            \item[(3)]there exists a positive constant $C=C(k,\alpha,f)$ such that for every $m\in\N^{*}$ and $r\in[1/4,1/2]$, there holds
            \begin{align}
                \sum_{i=0}^{m-1}r^{-i(k+\alpha)}\nu_{f}\left(r^{i}\right)\leq Cm;
            \end{align}
    	\end{enumerate}
    	Moreover, there exists a positive constant $C=C(n,k,\alpha)$ such that
    	\begin{align}
    		C^{-1}\|f\|_{C_{1}^{k+\alpha}(x_{0},t_{0};1)}\leq \|g\|_{C_{1}^{0}(x_{0},t_{0};1)}+\|P\|_{(x_{0},t_{0})}\leq C\|f\|_{C_{1}^{k+\alpha}(x_{0},t_{0};1)}.
    	\end{align}
    \end{theorem}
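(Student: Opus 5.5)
We prove the cycle of implications (1)$\Rightarrow$(2)$\Rightarrow$(3)$\Rightarrow$(1), tracking constants to get the norm equivalence. Throughout take $(x_0,t_0)=(0,0)$ and write $\rho(x,t)=(|x|^2+|t|)^{1/2}$. Here (1) means the \emph{usual} pointwise definition from the Appendix with weight $\mu_1=\rho^{k+\alpha}$: for $0<r\le1$,
$$\bbint_{Q_r}|f-P|\,dyd\tau\le K r^{k+\alpha},\qquad\text{equivalently}\qquad \nu_f(R)\le KR^{k+\alpha}.$$
Statement (2) says $\sup_{0<r\le1}\bbint_{Q_r}|g|\le K'$.

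For (1)$\Rightarrow$(2), decompose $Q_r$ into the dyadic parabolic annuli $A_j=Q_{2^{-j}r}\setminus Q_{2^{-j-1}r}$ for $j\ge0$. Since $Q_\rho\subset\widetilde Q_\rho$, on $A_j$ we have $\rho\ge c\,2^{-j}r$, hence $|g|\le C(2^{-j}r)^{-(k+\alpha)}|f-P|$ there. Therefore
$$\int_{Q_r}|g|\le C\sum_{j\ge0}(2^{-j}r)^{-(k+\alpha)}\,K(2^{-j}r)^{k+\alpha}\,|Q_{2^{-j}r}|\le CK|Q_r|\sum_j 2^{-j(n+2)},$$
which gives $\bbint_{Q_r}|g|\le CK$.

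For (2)$\Rightarrow$(1), use $|f-P|=\rho^{k+\alpha}|g|\le r^{k+\alpha}|g|$ on $Q_r$, so $\bbint_{Q_r}|f-P|\le K'r^{k+\alpha}$ immediately. The same $P$ serves in both directions, which yields the two-sided norm inequality with $C=C(n,k,\alpha)$.

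For (1)$\Rightarrow$(3), with $r\in[1/4,1/2]$ the bound $\nu_f(r^i)\le K r^{i(k+\alpha)}$ makes each summand at most $K$, so the sum is at most $Km$.

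The converse (3)$\Rightarrow$(1) is the main point. Taking $m=1$ gives $\nu_f(1)\le C$. Subtracting the instances $m$ and $m-1$ only bounds a difference of partial sums, which gives nothing, since (3) is merely an average bound. Instead I would exploit that $r$ ranges over a whole interval and that $\nu_f$ is monotone. Fix $R\in(0,1/4]$. For each $m\ge1$, the set of scales $\{r^{m-1}:r\in[1/4,1/2]\}=[4^{-(m-1)},2^{-(m-1)}]$, and as $m$ ranges these intervals cover $(0,1]$, so $R=r^{m-1}$ for some admissible $r$ and some $m$. Monotonicity of $\nu_f$ gives, for $i\le m-1$, $\nu_f(r^i)\ge\nu_f(R)$, and $r^{-i(k+\alpha)}$ is increasing in $i$. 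Hence the last $\approx m/2$ terms each exceed $r^{-(m/2)(k+\alpha)}\nu_f(R)$, which is only $R^{-(k+\alpha)/2}\nu_f(R)$, too weak.

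Better: use only the top term, $r^{-(m-1)(k+\alpha)}\nu_f(r^{m-1})\le Cm$, giving $\nu_f(R)\le CmR^{k+\alpha}$. This loses a factor $m\sim\log(1/R)$. To remove it, I would choose, for a given $R$, the pair $(r,m)$ with $m$ as small as possible. Since $r\in[1/4,1/2]$ allows $m-1\approx\log_2(1/R)$ down to $\log_4(1/R)$, $m$ is still comparable to $\log(1/R)$, so this does not help.

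This gap, passing from the averaged $Cm$ bound to a uniform bound, is where I expect the real difficulty. I would try to derive it from the fact that the sum controls \emph{all} the terms for every $r$ in the interval simultaneously. Summing the inequality over a family of $r$ that place many indices $i$ at scales comparable to $R$ would give about $m$ terms each $\gtrsim R^{-(k+\alpha)}\nu_f(R)$, up to bounded factors. The choice is $r$ with $r^{i}\in[R,2R]$ for $i$ in a range of length proportional to $m$. That requires $r$ close to $1$, which is not admissible.

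Failing that, I would interpret (3) as the paper's Definition~\ref{sec:space-a} and show equivalence with (1) in the form actually used later: namely that (3) gives the weighted-average bound for $g$ on dyadic annuli, consistent with its use in Lemma~\ref{ref-w()}. If the uniform bound genuinely fails, I would record (3)$\Leftrightarrow$(1) only up to this logarithmic control.
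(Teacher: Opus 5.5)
Your proofs of (1)$\Leftrightarrow$(2) and of (1)$\Rightarrow$(3) are correct and are the paper's arguments. One direction splits $Q_r$ into the parabolic annuli $Q_{2^{-j}r}\setminus Q_{2^{-j-1}r}$; the other uses $|f-P|=\rho^{k+\alpha}|g|$, where in fact $\rho<\sqrt{2}\,r$ on $Q_r$, which only costs a factor $2^{(k+\alpha)/2}$. The gap is (3)$\Rightarrow$(1), which you leave open. It cannot be closed, because for $k+\alpha>0$ it is false. The paper dismisses this direction by noting that $\nu_f$ is increasing. Monotonicity, however, yields only what you extracted from the top term, $\nu_f(R)\le C\bigl(1+\log_2(1/R)\bigr)R^{k+\alpha}$. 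A bound $Cm$ on partial sums cannot exclude terms of size comparable to $m$ if they are sparse on the logarithmic scale, and that is exactly the obstruction you kept hitting.

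Here is a counterexample. Take $(x_0,t_0)=(0,0)$, $P=0$, $\beta=k+\alpha>0$, $R_j=2^{-2^j}$, $A_j=Q_{R_j}\setminus Q_{R_j/2}$, and $f=\sum_{j\ge1}2^jR_j^{\beta}\,\frac{|Q_{R_j}|}{|A_j|}\,\chi_{A_j}$.

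\emph{(1) fails.} We have $\nu_f(R_j)\ge\frac{1}{|Q_{R_j}|}\int_{A_j}f=2^jR_j^{\beta}$, so $\nu_f(R)/R^{\beta}$ is unbounded, and (1) (hence also (2)) fails.

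\emph{(3) holds.} $A_\ell$ meets $Q_\rho$ only if $R_\ell<2\rho$, and $a_\ell:=2^\ell R_\ell^\beta$ decays super-geometrically. Hence $\nu_f(R)\le C a_{\ell(R)}$, where $\ell(R)$ is the least $\ell$ with $R_\ell<2R$; equivalently $R^{-\beta}\nu_f(R)\le C\,2^{\ell}(R_\ell/R)^{\beta}$ with $\ell=\ell(R)$. Fix $r\in[1/4,1/2]$. The indices $i$ with $\ell(r^i)=\ell$ satisfy $r^i>R_\ell/2$, so they contribute a geometric series (ratio $r^\beta\le 2^{-\beta}$) bounded by $C2^\ell$. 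For $i\le m-1$ and $\ell\ge 2$, minimality gives $R_{\ell-1}\ge 2r^{m-1}\ge 2\cdot 4^{1-m}$, which forces $2^\ell<4m$. Therefore $\sum_{i=0}^{m-1}r^{-i\beta}\nu_f(r^i)\le C\sum_{2^\ell<4m}2^\ell\le Cm$ uniformly in $m$ and $r$, so (3) holds.

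In general, (3)$\Rightarrow$(1) is valid only when $k+\alpha=0$: take $m=1$ and use monotonicity. The correct conclusion is (1)$\Leftrightarrow$(2)$\Rightarrow$(3), with (3) giving (1) only up to the logarithmic loss you found. Your fallback should therefore be stated as a correction of the theorem, not as a proof of it. The averaged condition in Definition~\ref{sec:space-a} is strictly weaker than the usual $C^{k+\alpha}$ condition. Reinterpreting (3) as that definition would make the equivalence circular rather than proved. The same averaged-to-uniform step is what the paper uses when it invokes this theorem to bound $\nu_g(1)$ in the decay estimate for $S_r$.
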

    \begin{proof}
    	Without loss of generality, we assume $(x_{0},t_{0})=(0,0)$ and $P=0$. 
        Note that
        $r^{-i(k+\alpha)}v_{f}(r^{i})\leq C\Longleftrightarrow f\in C_{1}^{k+\alpha}(x_{0},t_{0};1)$ and $\nu_{f}(r)$ is increasing with respect to $r$, we can easily obtain the equivalence between statements (2) and (3). 
        Consequently, it suffices to prove the equivalence between statements (1) and (2).

        $(2)\Longrightarrow (1)$: Since $g\in C_{1}^{0}(0,0;1)$,
    	for any $0<r\leq 1$, it holds,
    	\begin{eqnarray}
    		\begin{aligned}
    			\bbint_{Q_{r}}|f(x,t)|dxdt
    			&\leq \bbint_{Q_{r}}\frac{2^{\frac{k+\alpha}{2}}r^{k+\alpha}}{(|x|^{2}+|t|)^{\frac{k+\alpha}{2}}}|f(x,t)|dxdt=2^{\frac{k+\alpha}{2}}r^{k+\alpha}\bbint_{Q_{r}}|g(x,t)|dxdt\\
    			&\leq 2^{\frac{k+1}{2}}\|g\|_{C_{1}^{0}(0,0;1)}r^{k+\alpha},
    		\end{aligned}
    	\end{eqnarray}
    	which implies $f\in C_{1}^{k+\alpha}(0,0;1)$ and
    	$
    		\|f\|_{C_{1}^{k+\alpha}(0,0;1)}\leq 2^{\frac{k+1}{2}}\|g\|_{C_{1}^{0}(0,0;1)}.
    	$
  
        $(1)\Longrightarrow (2)$: By $f\in C_{1}^{k+\alpha}(0,0;1)$, for any $0<r\leq 1$, it holds
    	\begin{eqnarray}
    		\begin{aligned}\label{4.2-1}
    			&\int_{Q_{r}\backslash Q_{\frac{r}{2}}}|g(x,t)|dxdt
    			\leq
    			\frac{2^{k+\alpha}}{r^{k+\alpha}}\int_{Q_{r}\backslash Q_{\frac{r}{2}}}|f(x,t)|dxdt
    			\leq 2^{k+1}\|f\|_{C_{1}^{k+\alpha}(0,0;1)}r^{n+2}.
    		\end{aligned}  
    	\end{eqnarray}
    	Thus, we have
    	\begin{eqnarray}
    		\begin{aligned}
    			\int_{Q_{r}}|g(x,t)|dxdt
    			&=\sum_{i=0}^{\infty}\int_{Q_{\frac{r}{2^{i}}}\backslash Q_{\frac{r}{2^{i+1}}}}|g(x,t)|dxdt\\
    			&\leq 2^{k+1}\|f\|_{C_{1}^{k+\alpha}(0,0;1)}r^{n+2}\sum_{i=0}^{\infty}2^{-i(n+2)}\leq 2^{k+2}\|f\|_{C_{1}^{k+\alpha}(0,0;1)}r^{n+2},
    		\end{aligned}   
    	\end{eqnarray}
    	which yields $g\in C_{1}^{0}(0,0;1)$ and
    	$
    		\|g\|_{C_{1}^{0}(0,0;1)}\leq C\|f\|_{C_{1}^{k+\alpha}(0,0;1)}.
    	$  
        Hence, we complete the proof.
    \end{proof}
    Similarly, we obtain the equivalent result for $C^{k,\ln}(x_{0},t_{0})$.
    \begin{theorem}
    \label{reduce 3}
    	Let $f$ be a function and $P$ be a polynomial. For some $k\in\N$ and $(x_{0},t_{0})\in\R^{n}\times\R$, we define
    	$\nu_{f}$ and $g$ as \eqref{mu-3}, \eqref{C^k-1}, respectively.
    	Then the following statements are equivalent:
    	\begin{enumerate}
    		\item[(1)]$f\in C_{1}^{k,\ln}(x_{0},t_{0};1/2)$ with polynomial $P;$
    		\item[(2)]$g\in C_{1}^{0,\ln}(x_{0},t_{0};1/2)$ with polynomial $0;$
            \item[(3)]there exists a positive constant $C=C(k,\alpha,f)$ such that for every $m\in\N^{*}$ and any $r\in[1/4,1/2]$, there holds
            \begin{align}
                \sum_{i=1}^{m}r^{-(k+\alpha)i}\nu_{f}\left(r^{i}\right)\leq Cm^{2}.
            \end{align}
    	\end{enumerate}
    	Moreover, there exists a positive constant $C=C(n,k,\alpha)$ such that
    	\begin{align}
    		C^{-1}\|f\|_{C_{1}^{k,\ln}(x_{0},t_{0};1/2)}\leq \|g\|_{C_{1}^{0,\ln}(x_{0},t_{0};1/2)}+\|P\|_{(x_{0},t_{0})}\leq C\|f\|_{C_{1}^{k,\ln}(x_{0},t_{0};1/2)}.
    	\end{align}
    \end{theorem}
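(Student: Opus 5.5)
We reduce to $(x_0,t_0)=(0,0)$ and $P=0$, replacing $f$ by $f-P$. Then $\nu_f$ is built from $|f|$ and $g=f/(|x|^2+|t|)^{k/2}$ (here $\alpha=0$, the $\ln$ version). We follow the scheme of Theorem \ref{reduce 1}. The only change is that the weight $(|x|^2+|t|)^{k/2}$ is replaced by $(|x|^2+|t|)^{k/2}|\ln(|x|^2+|t|)|$ on the $f$ side. On the $g$ side it becomes $|\ln(|x|^2+|t|)|$, and on the level of dyadic sums the bound becomes $m$ times the $C^{k+\alpha}$ bound, i.e.\ $Cm^2$ instead of $Cm$.

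\textbf{$(1)\Leftrightarrow(3)$.} Since $\nu_f$ is nondecreasing, $f\in C^{k,\ln}_1(0,0;1/2)$ is equivalent to $\nu_f(\rho)\le K\rho^{k}|\ln\rho|$ for $0<\rho\le 1/2$. Taking $\rho=r^i$ with $r\in[1/4,1/2]$ gives $r^{-ik}\nu_f(r^i)\le CKi$. Summing over $i=1,\dots,m$ yields $\le CKm^2$. For the converse, we use the monotonicity of $\nu_f$. For $j\in[m/2,m]$ we have $\nu_f(r^m)\le\nu_f(r^j)$, so
\[
\tfrac m2\, r^{-mk}\nu_f(r^m)\le r^{-mk}\sum_{j=\lceil m/2\rceil}^m \nu_f(r^j)\le r^{-mk/2}\sum_{j\le m} r^{-jk}\nu_f(r^j).
\]
This crude version loses a factor $r^{-mk/2}$, so we refine it. For $k\ge1$ we use only the last few terms: $r^{-mk}\nu_f(r^m)$ is itself a term of the sum, so it is bounded by $Cm^2$. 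That is too weak, since we need $Cm$. Instead we compare with the block $j\in[m-L,m]$. In that block $r^{-jk}\ge r^{-(m-L)k}$, which gives $r^{-mk}\nu_f(r^m)\le r^{-Lk}L^{-1}\sum_{j=m-L}^m r^{-jk}\nu_f(r^j)$. This still does not give $m$ directly.

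The honest route is therefore the following. The characterization (3) by itself only controls averages, so the step from (3) to the pointwise bound $\nu_f(\rho)\lesssim\rho^k|\ln\rho|$ is the main obstacle. We will exploit that (3) is assumed for \emph{every} $r\in[1/4,1/2]$, exactly as in the definition of the paper's spaces. We apply the sum bound at two comparable ratios and use monotonicity to extract each single term. If this still falls short, we will treat (3) as the paper's definition of $C^{k,\ln}_1$ (Remark \ref{remark 2}) and only check consistency.

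\textbf{$(1)\Leftrightarrow(2)$.} We mimic Theorem \ref{reduce 1}. For $(2)\Rightarrow(1)$, on $Q_\rho$ we have $|f|\le \rho^{k}|g|$, so
\[
\bbint_{Q_\rho}|f|\le \rho^k\bbint_{Q_\rho}|g|\le C\rho^k|\ln\rho|\,\|g\|.
\]
For $(1)\Rightarrow(2)$, on the annulus $Q_\rho\setminus Q_{\rho/2}$ we have $|g|\le 2^k\rho^{-k}|f|$. Hence
\[
\int_{Q_\rho\setminus Q_{\rho/2}}|g|\le C\rho^{n+2}|\ln\rho|\,\|f\|.
\]
Summing over the dyadic annuli $Q_{\rho2^{-i}}\setminus Q_{\rho2^{-i-1}}$ gives $\sum_i 2^{-i(n+2)}\rho^{n+2}(|\ln\rho|+i\ln 2)\le C\rho^{n+2}|\ln\rho|$, using $|\ln\rho|\ge\ln2$ for $\rho\le1/2$. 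This yields $g\in C^{0,\ln}_1$, and the norm equivalence follows by tracking constants.
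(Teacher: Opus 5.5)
Your argument for $(1)\Leftrightarrow(2)$ and for $(1)\Rightarrow(3)$ is correct. For $(1)\Leftrightarrow(2)$ you compare $|g|$ with $2^k\rho^{-k}|f|$ on the parabolic annuli $Q_\rho\setminus Q_{\rho/2}$ and absorb the extra $i\ln 2$ using a lower bound on $|\ln\rho|$; this is the scheme of Theorem~\ref{reduce 1}, and the paper gives nothing more here beyond ``similarly''.

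The genuine gap is $(3)\Rightarrow(1)$, which you leave open. You propose two ways to close it, and neither works.
\begin{itemize}
\item Declaring (3) to be the definition via Remark~\ref{remark 2} is circular. That remark asserts the equivalence with the usual spaces precisely by pointing to this appendix.
\item Comparing two ratios $r$ cannot succeed, because the implication is false for $k\ge1$.
\end{itemize}
Monotonicity of $\nu_f$ only gives $a_{i+1}\le r^{-k}a_i$ for $a_i:=r^{-ik}\nu_f(r^i)$. So $a_i$ can climb geometrically to sparse spikes of height about $i^2$ while the partial sums stay $O(m^2)$.

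Here is a concrete counterexample. Let $P=0$, $\rho_j=2^{-2^j}$, $A_j=Q_{\rho_j}\setminus Q_{\rho_j/2}$ and $f=\sum_{j\ge1}\rho_j^k4^j\chi_{A_j}$.
\begin{itemize}
\item (1) fails: $\bbint_{Q_{\rho_j}}f\ge c\,\rho_j^k(\log_2\rho_j^{-1})^2$, which is not $O(\rho_j^k|\ln\rho_j|)$.
\item (3) holds. Since $\rho_{j+1}^k4^{j+1}=(4\rho_j^k)\,\rho_j^k4^j\le\rho_j^k4^j$, you get $\nu_f(\rho)\le 3\rho_j^k4^j$, where $j$ is the least index with $\rho_j<2\rho$. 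Hence on each block $r^i\in(\rho_j/2,\rho_{j-1}/2]$ the terms satisfy $a_i\le C(\rho_j/r^i)^k4^j$ and decay geometrically. Summing, $\sum_{i\le m}a_i\le C\sum_{2^j\le 4m}4^j\le Cm^2$, uniformly in $r\in[1/4,1/2]$.
\end{itemize}
Changing $r$ only rescales the spike indices by a bounded factor, so combining several ratios cannot detect the spikes.

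Thus no proof of $(3)\Rightarrow(1)$ exists as stated. The paper's own justification, monotonicity of $\nu_f$ in the proof of Theorem~\ref{reduce 1}, has the same defect. The analogous example with heights $2^j$ (starting at large $j$) also breaks $(3)\Rightarrow(2)$ in Theorem~\ref{reduce 1} whenever $k+\alpha>0$. To obtain a true equivalence, (3) must be replaced by a termwise condition such as $r^{-ik}\nu_f(r^i)\le Ci$ for all $i\ge1$. With that change your $(1)\Rightarrow(3)$ argument and its converse both become immediate.
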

    Finally, we establish equivalent characterizations for $C_{1}^{k+\alpha,Dini}(x_{0},t_{0};1)$.
    \begin{theorem}
    \label{reduce 2}
    	Let $f$ be a function and $P$ be a polynomial. For some $k\in\N$, $\alpha\in[0,1)$ and $(x_{0},t_{0})\in\R^{n}\times\R$, we define
        \begin{equation}           
\nu_{g}(R):=\sup_{0<r\leq R}\bbint_{-r^{2}}^{0}\bbint_{ B_{r}}|g(x,t)|dxdt, 
\end{equation} 
    	and let $\nu_{f}$ and $g$ be defined as \eqref{mu-3}, \eqref{C^k-1} respectively.
    	Then the following statements are equivalent:
    	\begin{enumerate}
    		\item[(1)]$f\in C_{1}^{k+\alpha,Dini}(x_{0},t_{0};1)$ with polynomial $P;$
            \item[(2)] There exists a posive constant $C=C(k,\alpha,f)$ such that for all $r\in[1/4,1/2],$ it holds
            \begin{equation}
                \sum\limits_{i=0}^{\infty}r^{-i(k+\alpha)}\nu_{f}\left(r^{i}\right)\leq C.
            \end{equation}
    	\end{enumerate}
    \end{theorem}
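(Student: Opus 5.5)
The plan is to reduce both directions to a comparison between the Dini integral $\int_{0}^{1}\nu_f(\rho)\rho^{-(k+\alpha+1)}\,d\rho$ and the dyadic-type series in (2). This uses only that $\nu_f$ is nondecreasing, which holds because it is defined as a supremum over $0<r\le R$. As in Theorem \ref{reduce 1}, we take $(x_0,t_0)=(0,0)$ without loss of generality. For a fixed ratio $r\in[1/4,1/2]$ we split $(0,1]$ into the intervals $(r^{i+1},r^{i}]$, $i\ge 0$.

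\emph{(1)$\Rightarrow$(2).} On $(r^{i+1},r^{i}]$, monotonicity gives $\nu_f(\rho)\ge \nu_f(r^{i+1})$. The weight satisfies $\rho^{-(k+\alpha+1)}\ge r^{-i(k+\alpha+1)}$ there, and the interval has length $r^{i}(1-r)\ge r^{i}/2$. Hence
\begin{equation*}
\int_{r^{i+1}}^{r^{i}}\frac{\nu_f(\rho)}{\rho^{k+\alpha+1}}\,d\rho\;\ge\;\tfrac12\, r^{-i(k+\alpha)}\nu_f(r^{i+1})\;=\;\tfrac{r^{k+\alpha}}{2}\, r^{-(i+1)(k+\alpha)}\nu_f(r^{i+1}).
\end{equation*}
Summing over $i\ge0$ bounds $\sum_{i\ge1} r^{-i(k+\alpha)}\nu_f(r^{i})$ by $2r^{-(k+\alpha)}\le 2\cdot 4^{k+1}$ times the Dini integral. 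The term $i=0$ is $\nu_f(1)$, which is part of the Dini norm. This gives (2) with $C$ controlled by $4^{k+1}\|f\|_{C_{1}^{k+\alpha,Dini}(0,0;1)}$, uniformly in $r\in[1/4,1/2]$.

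\emph{(2)$\Rightarrow$(1).} Here we reverse the inequalities. On $(r^{i+1},r^{i}]$ we have $\nu_f(\rho)\le\nu_f(r^{i})$ and $\rho^{-(k+\alpha+1)}\le r^{-(i+1)(k+\alpha+1)}$, and the interval length is at most $r^{i}$. So each piece is at most $r^{-(k+\alpha+1)}\,r^{-i(k+\alpha)}\nu_f(r^{i})$. Summing, and using that only one fixed $r$ (say $r=1/2$) is needed, we get
\begin{equation*}
\int_0^1\frac{\nu_f(\rho)}{\rho^{k+\alpha+1}}\,d\rho\le 4^{k+2}\sum_{i\ge0} r^{-i(k+\alpha)}\nu_f(r^{i})\le 4^{k+2}C .
\end{equation*}
The remaining term $\nu_f(1)$ is the $i=0$ term of the series. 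Together these give $f\in C_{1}^{k+\alpha,Dini}(0,0;1)$ with the same polynomial $P$.

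The argument is essentially a Cauchy condensation, and I do not expect a real obstacle. The only points to check are these. First, $\nu_f$ must be finite and monotone, which is immediate from its definition as a supremum. Second, the constants must stay uniform over $r\in[1/4,1/2]$; this holds because $r$ is bounded away from $0$ and $1$, so $r^{-(k+\alpha+1)}$ and $(1-r)^{-1}$ are bounded by constants depending only on $k$. Finally, we note that the statement is also consistent with the Dini variant in the introduction's Remark, where the sum uses $(r_0r)^{-i(k+\alpha)}$ with $r_0=1$.
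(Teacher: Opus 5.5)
Your proposal is correct and follows essentially the same route as the paper: both compare the Dini integral on each interval $(r^{i+1},r^{i}]$ with an adjacent term of the series, using only that $\nu_f$ is nondecreasing (a Cauchy-condensation argument). Your version is slightly cleaner in two ways. It treats $k+\alpha=0$ uniformly, whereas the paper splits that case off with the substitution $s=r^{\theta}$. It also absorbs the $i=0$ term through $\nu_f(1)$, instead of the paper's upper bound, which at $i=0$ formally involves an integral over $(1,r^{-1})$.
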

    \begin{proof}
    	Without loss of generality, we assume $(x_{0},t_{0})=(0,0)$ and $P=0$. 
        We first show the equivalence between statements (1) and (2). 

        In the case $k+\alpha\neq 0,$ note that for any $r\in[1/4,1/2]$ and integer $i\in\N$ there exists a positive constant $C=C(k,\alpha,f)$ such that
        \begin{equation}
        C^{-1}\int_{r^{i+1}}^{r^{i}}\frac{\nu_{f}(s)}{s^{k+\alpha+1}}ds\leq \frac{\nu_{f}(r^{i})}{r^{i(k+\alpha)}}\leq C\int_{r^{i}}^{r^{i-1}}\frac{\nu_{f}(s)}{s^{k+\alpha+1}}ds.
        \end{equation}
        Consequently, we obtain
        \begin{equation}
            C^{-1}\left(\int_{0}^{1}\frac{\nu_{f}(s)}{s^{k+\alpha+1}}ds+\nu_{f}(1)\right)\leq \sum_{i=0}^{\infty}\frac{\nu_{f}(r^{i})}{r^{i(k+\alpha)}}\leq C\left(\int_{0}^{1}\frac{\nu_{f}(s)}{s^{k+\alpha+1}}ds+\nu_{f}(1)\right).
        \end{equation}
        For the case $k+\alpha=0$, we obtain the equivalence between statements (1) and (2) by
    	\begin{eqnarray}
    		\begin{aligned}
    			\sum\limits_{i=0}^{\infty}\nu_{f}\left(r^{i}\right)\leq\int_{0}^{\infty}\nu_{f}\left(r^{\theta}\right)d\theta+\nu_{f}(1)\leq C\left(\int_{0}^{1}\frac{\nu_{f}(s)}{s}ds+\nu_{f}(1)\right) \leq C\sum\limits_{i=0}^{\infty}\nu_{f}\left(r^{i}\right),
    		\end{aligned}
    	\end{eqnarray}
        \begin{remark}
            If we further assume that $\nu_{f}(2r)\geq 2^{k+\alpha}\nu_{f}(r)$ holds for all $r\in(0,1/2]$, then the above statements are equivalent to 
        \begin{enumerate}
            \item[(3)]$g\in C_{1}^{0,Dini}(x_{0},t_{0};1)$ with polynomial $0$.
        \end{enumerate}
        Moreover, there exists a positive constant $C=C(n,k,\alpha)$ such that
    	\begin{align}
    		C^{-1}\|f\|_{C_{1}^{k+\alpha,Dini}(x_{0},t_{0};1)}\leq \|g\|_{C_{1}^{0,Dini}(x_{0},t_{0};1)}+\|P\|_{(x_{0},t_{0})}\leq C\|f\|_{C_{1}^{k+\alpha,Dini}(x_{0},t_{0};1)}.
    	\end{align}
        \end{remark}
    \end{proof}

    {\bf{Acknowledgments.}} The authors would like to thank Professor Congming Li and Professor Jianli Liu for their useful suggestions.
 Guo is partially supported by the National Natural Science Foundation of China (Grant No. 12501145), the Natural Science Foundation of Shanghai (No. 25ZR1402207),   the China Postdoctoral Science Foundation (No. 2025T180838 and No. 2025M773061), the Postdoctoral Fellowship Program of CPSF (No. GZC20252004), and the Institute of Modern Analysis-A Frontier Research Center of Shanghai.
Shen and Xie are partially supported by the National Natural Science Foundation of China (Grant No. W2531006, 12250710674 and 12031012) and the Institute of Modern Analysis-A Frontier Research Center of Shanghai. 
 \medskip

{\bf{Date availability statement:}} Data will be made available on reasonable request.
\medskip

{\bf{Conflict of interest statement:}} There is no conflict of interest.

\bigskip

Yahong Guo

School of Mathematical Sciences

Shanghai Jiao Tong University

Shanghai, 200240, P.R. China

yhguo@sjtu.edu.cn
\medskip

Qizhen Shen

School of Mathematical Sciences

Shanghai Jiao Tong University

Shanghai, 200240, P.R. China

Department of Mathematics

Shanghai University

Shanghai 200444, P.R. China

zhuimengtiantang@sjtu.edu.cn
\medskip

Jiongduo Xie

School of Mathematical Sciences

Shanghai Jiao Tong University

Shanghai, 200240, P.R. China

jiongduoxie@outlook.com
\end{document}